\definecolor{darkblue}{RGB}{0,0,160}
\newcommand{\excise}[1]{}
\newtheorem{thm}{Theorem}[section]
\newtheorem{lemma}[thm]{Lemma}
\newtheorem{prop}[thm]{Proposition}
\newtheorem{conj}[thm]{Conjecture}
\newtheorem{question}[thm]{Question}
\theoremstyle{definition}
\newtheorem{example}[thm]{Example}
\newtheorem{remark}[thm]{Remark}
\newtheorem{defn}[thm]{Definition}
\numberwithin{equation}{section}
\newcommand{\ring}[1]{\ensuremath{\mathbb{#1}}}
\newcommand{\Itor}{I_{\ZZ\cB}}
\renewcommand\>{\rangle}
\newcommand\<{\langle}
\newcommand\1{\mathbf{1}}
\newcommand\2{\mathbf{2}}
\newcommand\NN{\ring{N}}
\newcommand\RR{\ring{R}}
\newcommand\ZZ{\ring{Z}}
\newcommand\kk{\Bbbk}
\newcommand\gl{\mathrm{gl}}
\newcommand\cB{{\mathcal B}}
\newcommand\cC{{\mathcal C}}
\newcommand\cX{{\mathcal X}}
\newcommand\cL{{\mathcal L}}
\newcommand\cM{{\mathcal M}}
\newcommand\fS{{\mathfrak S}}
\def\ol#1{{\overline {#1}}}
\newcommand\set[1]{\{#1\}}
\newcommand\abs[1]{|#1|}
\DeclareMathOperator\rank{rank} 
\newcommand{\defas}{\mathrel{\mathop{:}}=}   
\newcommand\Perp{\protect\mathpalette{\protect\independenT}{\perp}}
\def\independenT#1#2{\mathrel{\rlap{$#1#2$}\mkern2mu{#1#2}}}
\newcommand{\ind}[2]{\left.#1 \Perp #2 \inD}
\newcommand{\inD}[1][\relax]{\def\argone{#1}\def\temprelax{\relax}
  \ifx\argone\temprelax\right.\else\,\middle|#1\right.{}\fi}
\begin{document}

\title{Positive margins and primary decomposition}

\author{Thomas Kahle}
\address{Forschungsinstitut Mathematik \\ Z\"urich, Switzerland} 
\email{thomas.kahle@math.ethz.ch}
\author{Johannes Rauh}
\address{MPI MIS \\ Leipzig, Germany}
\email{rauh@mis.mpg.de}
\author{Seth Sullivant}
\address{Department of Mathematics  \\ North Carolina State University, Raleigh, NC}
\email{smsulli2@ncsu.edu}

\thanks{In November 2011 TK and JR worked intensively on this project
during a two-week ``Research in Pairs'' stay at Mathematisches
Forschungsinstitut Oberwolfach.  TK is supported by an EPDI
fellowship.  JR is supported by the Volkswagen Foundation.  SS is
support by the David and Lucille Packard Foundation and the
U.S.~National Science Foundation (DMS 0954865)}

\date{June 20, 2012}

\makeatletter
  \@namedef{subjclassname@2010}{\textup{2010} Mathematics Subject Classification}
\makeatother

\subjclass[2010]{Primary: 13P10, 52B20, Secondary: 11P21, 60J22, 62J12,
05C81}



\keywords{algebraic statistics, Markov basis, connectivity
of fibers, binomial primary decomposition}

\begin{abstract}
We study random walks on contingency tables with fixed marginals,
corresponding to a (log-linear) hierarchical model.  If the set of
allowed moves is not a Markov basis, then there exist tables with the
same marginals that are not connected.  We study linear conditions on
the values of the marginals that ensure that all tables in a given
fiber are connected.  We show that many graphical models have the
positive margins property, which says that all fibers with strictly
positive marginals are connected by the quadratic moves that
correspond to conditional independence statements.  The property
persists under natural operations such as gluing along cliques, but we
also construct examples of graphical models not enjoying this
property.   We also provide a negative answer to a question of
Engstr\"om, Kahle, and Sullivant by demonstrating that 
the global Markov ideal of the complete bipartite graph $K_{3,3}$ is
not radical.

Our analysis of the positive margins property depends on computing the
primary decomposition of the associated conditional independence
ideal.  The main technical results of the paper are primary
decompositions of the conditional independence ideals of graphical
models of the $N$-cycle and the complete bipartite graph $K_{2,N-2}$,
with various restrictions on the size of the nodes.
\end{abstract}

\maketitle
\tableofcontents

\section{Introduction}
\label{sec:introduction}

Let $\cB$ be a finite subset of $\ZZ^n$, and consider the graph with
vertex set $\NN^n$ (here, $\NN$ denotes the natural numbers including
zero) and edges $(u,v)$ whenever $u-v\in\pm\cB$.  We want to study the
connected components of this graph.  Our motivation comes from Markov
chain random walks on $\NN^n$ using the elements in~$\cB$ as moves.
If every edge in this graph has positive probability, then the
connected components are the irreducible components of
the Markov chain.

A necessary condition for $u,v\in\NN^n$ to be connected by $\cB$ is
that their difference vector $u-v$ lies in the lattice $\ZZ\cB$
generated by~$\cB$.  We want to know when this condition is
sufficient.  In this paper we assume that the lattice $\ZZ\cB$ is
\emph{saturated}, that is, it can be written as the integer kernel
$\ker_\ZZ A$ of an integer matrix~$A$.  We do not require
$\cB$ to be a \emph{basis} of $\ZZ \cB$---there can be more than
$\rank(\ZZ\cB)$ generators.
For any $u\in\NN^n$ we call $(u+\ZZ\cB)\cap\NN^n$ the \emph{fiber} of~$u$.
For example, in the statistical analysis of contingency tables, people
are interested in the set of all contingency tables with given
marginals.  In this case, the matrix $A$ corresponds to the linear map
that computes the marginals from a contingency table.  Monte Carlo
sampling techniques are then applied to compute approximate p-values
in Fisher's exact test for conditional
inference~\cite{diaconissturmfels98,drton09:_lectur_algeb_statis}.

In the literature, often the following inverse problem is studied:
Given a saturated lattice and a point $u\in\NN^n$, find a set $\cB$
such that the fiber of $u$ is connected.  Such a set is called a
\emph{Markov subbasis} in~\cite{yuguo06:_sequen}.  Ideally one wants
to compute a \emph{Markov basis}, a finite set that connects
\emph{all} fibers at once.

The fundamental theorem of Markov bases
(see~\cite[Theorem~1.3.6]{drton09:_lectur_algeb_statis} and
Theorem~\ref{thm:fundamental-Markov-bases} below) implies that Markov
bases can be found using computer algebra.  Despite fast computers,
excellent algorithms~\cite{malkin-2006}, and efficient
implementations~\cite{4ti2}, computing Markov bases remains hard and
is out of reach for many practical applications.  Furthermore, since
Markov bases are guaranteed to connect every fiber, they might be much
larger than needed to connect a particular fixed given fiber.  In
this paper we study conditions on the fiber that certify that a given
set of moves connects this fiber.  In particular, we say that $\cB
\subseteq \ker_{\ZZ}A$ has the positive margins property with respect
to the matrix $A$ if $(Au)_{i} > 0$ for all $i$ implies that the fiber
of $u$ is connected by~$\cB$.  
This property depends not only on the lattice $\ker_{\ZZ} A$ but also
the particular matrix~$A$.

The main focus in this paper is on lattices and moves associated to
graphical models.  For graphical models there is a canonical set of
``simple'' moves, which correspond to the global Markov conditional
independence statements.  It has been observed that for some models,
if a contingency table $u$ has strictly positive margins, then these
simple moves connect the fiber of~$u$~\cite{chen10:_markov}.
Similarly, for the no-three-way interaction model Bunea and Besag
proved a positive margins property for a set of ``simple''
moves~\cite{bunea00:_mcmc_i_j_k}.  In the present paper, we perform a
systematic study of the positive margins property for graphical
models.

Connectivity of lattice walks can be studied with tools from
commutative algebra using the following idea: Consider the binomial
ideal
\begin{equation*}
I_\cB = \< p^{u_+} - p^{u_-} : u\in\cB \> \subseteq \kk[p_1,\dots,p_n],
\end{equation*}
where $\kk$ is a field, $u = u^{+} - u^{-}$ is the minimal support
decomposition of $u$ into positive and negative parts, and $p^{v} =
p_{1}^{v_{1}} \cdots p_{n}^{v_{n}}$.  The following result is
well-known (see~\cite{diaconis98:_lattic} and references therein):
\begin{prop}
\label{prop:DES-criterion}
Two points $u,v\in\NN^n$ are connected by a path
$u=u_0,u_1,\dots,u_n=v$ in $\NN^n$ with $u_{i+1}-u_i\in\pm\cB$ if and
only if $p^u-p^v\in I_\cB$.
\end{prop}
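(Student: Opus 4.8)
The plan is to prove the two implications separately; the ``only if'' direction is a short telescoping computation, while the ``if'' direction requires a vector-space argument. For the forward direction I would take a path $u = u_0, u_1, \dots, u_m = v$ in $\NN^n$ with $u_i - u_{i+1} = \pm b_i$ for suitable $b_i \in \cB$, and write $p^u - p^v = \sum_{i=0}^{m-1}(p^{u_i} - p^{u_{i+1}})$, reducing the claim to showing that each summand lies in $I_\cB$. Letting $w_i$ be the componentwise minimum of $u_i$ and $u_{i+1}$, I would check, using that $b_i = (b_i)_+ - (b_i)_-$ is a minimal support decomposition and that $u_i, u_{i+1} \ge 0$, that $\{u_i, u_{i+1}\} = \{w_i + (b_i)_+,\, w_i + (b_i)_-\}$ with $w_i \in \NN^n$; hence $p^{u_i} - p^{u_{i+1}} = \pm p^{w_i}\bigl(p^{(b_i)_+} - p^{(b_i)_-}\bigr) \in I_\cB$, and summing finishes this direction.

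For the converse I would introduce the equivalence relation $\sim$ on $\NN^n$ with $a \sim c$ iff $a$ and $c$ are joined by such a path, and let $J \subseteq \kk[p_1,\dots,p_n]$ be the $\kk$-span of all binomials $p^a - p^c$ with $a \sim c$. The first step is to see that $J$ is in fact an \emph{ideal}: translating a path $a = a_0, \dots, a_k = c$ by any $d \in \NN^n$ gives another valid path in $\NN^n$, so $a \sim c$ implies $a + d \sim c + d$ and therefore $p^d(p^a - p^c) = p^{a+d} - p^{c+d} \in J$; a $\kk$-subspace stable under multiplication by all monomials is an ideal. Since every generator $p^{b_+} - p^{b_-}$ of $I_\cB$ lies in $J$ (a single move joins $b_+$ and $b_-$), we get $I_\cB \subseteq J$.

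The last step is to read connectivity off from membership in $J$. I would use that $J$ decomposes along the $\sim$-classes: for a class $C$, the binomials $p^a - p^c$ with $a, c \in C$ span precisely the coordinate-sum-zero subspace of $\bigoplus_{a \in C} \kk\, p^a$, while binomials attached to distinct classes involve disjoint monomials; hence $\sum_a \lambda_a p^a \in J$ if and only if $\sum_{a \in C} \lambda_a = 0$ for every class $C$. Applying this to $p^u - p^v$ (the case $u = v$ being trivial): if $u$ and $v$ were in different classes, the coefficient sum over the class of $u$ would be $1 \ne 0$, contradicting $p^u - p^v \in I_\cB \subseteq J$. Thus $u \sim v$, which is exactly the desired connectivity.

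I expect the only point requiring real thought to be the middle step — recognizing that the span $J$ of ``connectivity binomials'' is an ideal and that its decomposition along $\sim$-classes lets one extract connectivity; the telescoping factorization and the final coefficient bookkeeping are routine.
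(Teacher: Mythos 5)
The paper does not prove Proposition~\ref{prop:DES-criterion}; it cites it as well-known from Diaconis, Eisenbud, and Sturmfels and moves on. Your proof is correct and is the standard argument from that source: the telescoping sum with the componentwise-minimum trick $\{u_i,u_{i+1}\}=\{w_i+(b_i)_+,\,w_i+(b_i)_-\}$ handles the forward direction, and for the converse, the key observation that the $\kk$-span $J$ of all connectivity binomials is monomial-stable (hence an ideal) containing $I_\cB$, combined with the direct-sum decomposition of $J$ along $\sim$-classes, gives exactly what is needed.
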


Diaconis, Eisenbud, and Sturmfels~\cite{diaconis98:_lattic} proposed
to analyze the connectivity of the fibers of $\cB$ using a primary
decomposition of the ideal~$I_{\cB}$.  In Section~\ref{sec:background}
we study the positive margins property and relate it to decompositions
of~$I_\cB$.  In particular, Lemma~\ref{lem:positive-margins} gives a
sufficient condition and a necessary condition for the positive
margins property to hold.  We also study a generalization of the
positive margins property, which we call interior point property.

Our ideals $I_{\cB}$ are conditional independence ideals, in addition
to being motivated by the application to random walks.  Primary
decompositions of conditional independence ideals are interesting in
their own right, since they reveal important information about the set
of probability distributions that satisfy the conditional independence
statements~\cite{drton09:_lectur_algeb_statis,HHHKR10:Binomial_Edge_Ideals,RauhAy2011:Robustness_and_CI_ideals,swanson-taylor11_CI}.
Moreover, it is interesting to know whether $I_{\cB}$ is radical.
Section~\ref{sec:graphicalmodels} provides background on graphical
models and conditional independence.

Section~\ref{sec:pos-margins-and-graphs} studies the positive margins
property of graphical models and radicality of their global Markov
ideals.  Both properties are preserved when forming the coned graph
and when gluing graphs along cliques.  In particular, decomposable
graphs have both properties.  From our results we deduce that, if all
nodes are binary, then all global Markov ideals of graphs on five or
fewer nodes are radical, and the complete bipartite graph $K_{2,3}$ is
the only graph on five or fewer nodes which does not satisfy the
original positive margins property while it does satisfy the interior
point property.  We also find graphical models without positive
margins property, for any choice of matrix $A$, when the contingency
table is sufficiently large (Theorem~\ref{thm:orth-lat-squares}).

The graphical models of the $N$-cycle and the complete bipartite graph
$K_{2,N-2}$ (with restrictions on the sizes of the contingency tables)
are discussed in detail in Sections~\ref{sec:binary-n-cycles}
and~\ref{sec:K2n}.  We construct Markov bases and show that the global
Markov ideals are radical by computing the primary decompositions.

Our results suggest a number of different directions for further
research.  First, in our analysis we profited from the fact that all
conditional independence ideals that we studied are radical.  There
do exist non-radical global Markov ideals, but we do not know how
abundant those are. 
Second, our proofs of the positive margins property and interior point
property for~$\cB$ depend on knowledge of a Markov basis for the
lattice~$\ZZ\cB$.  It remains an open problem to develop proofs that
do not depend on that extra knowledge.


\section{Lattice walks, binomial ideals, and positive margins}
\label{sec:background}

As Diaconis, Eisenbud, and Sturmfels \cite{diaconis98:_lattic}
observed, the connectivity of the lattice walk induced by the moves in
$\cB$ can be analyzed by looking at a decomposition of the
ideal~$I_{\cB}$.  Indeed, suppose that $I_{\cB} = \cap_{i} I_{i}$.
Then $p^u-p^v\in I_\cB$ if and only if $p^u-p^v\in I_i$ for all~$i$.
The following example demonstrates how to profit from this simple
idea.

\begin{example}[{cf. \cite[Example~1.2]{diaconis98:_lattic}}]
\label{e:simple}
Which lattice points in $\NN^2$ can be connected by the moves
$\cB=\{(2,-2), (3,-3)\}$?  The solution can be read off from the
decomposition
\begin{equation*}
I_\cB = \<p_1^2-p_2^2, p_1^3-p_2^3\> = \<p_1-p_2\> \cap \<p_1^2,p_2^2\>.
\end{equation*}
Now, $p^a-p^b\in\<p_1-p_2\>$ if and only if $a_1+a_2=b_1+b_2$, while
$p^a-p^b\in \<p_1^2, p_2^2\>$ if and only if $\max\{a_1,a_2\}\ge 2$
and $\max\{b_1,b_2\}\ge 2$.  Hence, $a$ and $b$ are connected by $\cB$
if and only if $a_1+a_2=b_1+b_2$ and
$\min\{\max\{a_1,a_2\},\max\{b_1,b_2\}\}\ge 2$. \qed
\end{example}

The first decomposition that comes to mind is primary decomposition.
If the ground field is algebraically closed, then, since $I_\cB$ is
binomial, there is a binomial primary decomposition $I_\cB=\cap_iP_i$,
where $P_i$ are generated by binomials.  When the primary
decomposition introduces new coefficients, then it is too fine to
accurately reflect the combinatorics of~$\mathcal{B}$---everything
that matters in Proposition~\ref{prop:DES-criterion} are pure
differences (i.e.~binomials of the form $p^u-p^v$).  In this case one
should work with a \emph{mesoprimary decomposition}
of~$I_\cB$~\cite{kahle11mesoprimary}, the finest decomposition into
unital binomial ideals (i.e.~ideals generated by pure differences and
monomials).  In the examples studied in this paper all ideals
$I_{\cB}$ are radical, and the primary and mesoprimary decompositions
agree.

The ``most important'' associated prime of $I_\cB$, according
to~\cite[p.\ 116]{sturmfels02:_solvin_system_polyn_equat}, is the
\emph{toric ideal}
$\Itor=I_{\cB} : \big(\prod_{i \in [n]}p_{i}\big)^{\infty}$,
which is the only associated prime of $I_\cB$ that does not contain variables.
It equals the kernel of the ring homomorphism
\begin{equation}\label{eq:toricMorphism}
\phi_A^* : \kk[p_i:i=1,\dots,n] \to \kk[\theta_j,\theta_j^{-1}:j=1,\dots,h],\quad
p_i\mapsto \prod_j \theta_j^{A_{j,i}}
\end{equation}
where $A$ is an integral matrix such that $\ker_{\ZZ}A = \ZZ \cB$.
Equivalently,  $\Itor = \< p^{u} -
p^{v}:u, v \in \NN^{n }, Au = Av \>$~\cite{sturmfels96:_gr_obner_bases_and_convex_polyt}.
From this follows:
\begin{thm}[Fundamental Theorem of Markov bases~{\cite[Theorem~1.3.6]{drton09:_lectur_algeb_statis}}]
\label{thm:fundamental-Markov-bases} A set
$\cB \subseteq \ker_\ZZ A$ is a Markov basis  if and only if $I_{\cB} = \Itor$.
\end{thm}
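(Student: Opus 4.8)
The plan is to read off the statement directly from Proposition~\ref{prop:DES-criterion} combined with the pure-difference description $\Itor = \langle\, p^u - p^v : u,v\in\NN^n,\ Au = Av \,\rangle$ recalled just above. Unwinding the definition, $\cB$ is a Markov basis exactly when any two points $u,v\in\NN^n$ lying in a common fiber, i.e.\ with $Au = Av$, are joined by a path in $\NN^n$ using moves from $\pm\cB$; and by Proposition~\ref{prop:DES-criterion} this connectivity is equivalent to $p^u - p^v\in I_\cB$. So the theorem reduces to the equivalence between $I_\cB = \Itor$ and the assertion ``$p^u - p^v\in I_\cB$ for every pair $u,v\in\NN^n$ with $Au = Av$''.

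First I would record the inclusion $I_\cB\subseteq\Itor$, which holds with no hypothesis on $\cB$: every generator $p^{u_+} - p^{u_-}$ of $I_\cB$ comes from some $u\in\cB\subseteq\ker_\ZZ A$ written as $u = u^+ - u^-$, so $u^+ - u^- \in\ker_\ZZ A$, whence $Au^+ = Au^-$, and therefore $p^{u_+} - p^{u_-}$ is one of the binomials generating $\Itor$. Hence proving $I_\cB = \Itor$ amounts to proving the single inclusion $\Itor\subseteq I_\cB$. For the forward implication, assume $\cB$ is a Markov basis; then for every $u,v$ with $Au = Av$ the points are connected by $\cB$, so Proposition~\ref{prop:DES-criterion} gives $p^u - p^v\in I_\cB$, and since these binomials generate $\Itor$ we get $\Itor\subseteq I_\cB$, hence equality. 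Conversely, if $I_\cB = \Itor$, then for any $u,v\in\NN^n$ with $Au = Av$ we have $p^u - p^v\in\Itor = I_\cB$, so Proposition~\ref{prop:DES-criterion} produces a path in $\NN^n$ from $u$ to $v$ with steps in $\pm\cB$; as the pair was an arbitrary one in a common fiber, every fiber is connected and $\cB$ is a Markov basis.

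The argument is essentially formal, so there is no real obstacle to overcome; the one ingredient that genuinely must be in place is the equality $\Itor = \langle\, p^u - p^v : Au = Av \,\rangle$ (the generation of the toric ideal by pure differences within fibers), which is quoted above from~\cite{sturmfels96:_gr_obner_bases_and_convex_polyt} and is exactly what makes both inclusion statements go through. Everything else is a careful back-and-forth translation between the combinatorial language (``Markov basis'', ``fiber'', ``connected'') and the algebraic language (membership in $I_\cB$) supplied by Proposition~\ref{prop:DES-criterion}.
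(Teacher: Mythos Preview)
Your argument is correct and is the standard proof of this well-known result. Note that the paper does not actually give its own proof of this theorem: it is stated with a citation to~\cite[Theorem~1.3.6]{drton09:_lectur_algeb_statis} and no proof environment follows, so there is nothing to compare against beyond observing that your derivation from Proposition~\ref{prop:DES-criterion} and the pure-difference description of $\Itor$ is exactly the expected one.
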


The following is our basic definition.
\begin{defn}
\label{def:pos-margins}
Assume that $\cB$ generates a saturated lattice $\ZZ\cB$, and let $A$
be a non-negative integer matrix such that $\ZZ\cB \subseteq \ker_\ZZ A$.
Then $\cB$
has the \emph{positive margins property} (with respect to $A$) if
$(Au)_{i} > 0$ for all~$i$ implies that the fiber of $u$ is connected.
\end{defn}

In most of the examples below, $\ZZ\cB=\ker_\ZZ A$.
Still, the choice of the matrix $A$ is crucial.
In many situations there
is a canonical choice, such as the marginal computing matrix in the
case of graphical models (see Section~\ref{sec:graphicalmodels}).  We
can augment any matrix by adding rows which do not effect
$\ker_{\ZZ}A$, but yield further nontrivial positivity conditions to
check.  A natural choice is to add all linear functionals
corresponding to facets of the cone $\RR_{\geq 0}A$ generated by the
columns of $A$.  In this case, the condition $(Au)_i>0$ for all $i$
says that $Au$ lies in the relative interior of the
cone~$\RR_{\geq 0}A$.

\begin{defn}
\label{def:intPoint}
Let $\cB$ be a set of generators of the integer kernel $\ker_\ZZ A$ of the
integer matrix~$A$.  Then $\cB$ has the \emph{interior point property}
if it connects every fiber for which $Au$ lies in the relative interior of the
cone~$\RR_{\geq 0}A$.
\end{defn}

We now prove an algebraic criterion to decide the
positive margins property.  For any ideal $I\subseteq\kk[p]$, let
$m_I:=\prod \set{p_i \notin I}$ be the product of the variables not
contained in $I$, and let $u_I$ be the exponent vector of~$m_I$.  We
also need the product $\hat m_I:=\prod \set{p_i : (I:p_i) = I}$ of all
variables that are regular modulo $I$ and its exponent vector $\hat
u_I$.  If $I$ is a prime or a radical cellular ideal, then $m_I=\hat
m_I$.
\begin{lemma}
\label{lem:positive-margins}
Let $\cB$ span a saturated sublattice of $\ker_\ZZ A$ for some
non-negative integer matrix $A$.  Let $I_{\cB}=(\cap_{i=1}^c I_i)\cap
\Itor$ be a decomposition such that $\Itor\not\subseteq I_i$ for
all~$i$.
\begin{itemize}
\item If for all $i=1,\dots,c$ there exists $j$ such that $(A
u_{I_i})_j = 0$, then $\cB$ has the positive margins property with respect to $A$.
\item If $\cB$ has the positive margins property with respect to $A$, then for all
$i=1,\dots,c$ there exists $j$ such that $(A \hat u_{I_i})_j=0$.
\end{itemize}
\end{lemma}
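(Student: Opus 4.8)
The statement connects a combinatorial condition ("some coordinate of the marginal vanishes") on the monomial exponent vectors $u_{I_i}$ (respectively $\hat u_{I_i}$) of the non-toric components to the positive margins property. My plan is to translate everything into the language of Proposition~\ref{prop:DES-criterion}: the fiber of $u$ is connected by $\cB$ if and only if $p^u - p^v \in I_\cB$ for every $v$ in the fiber, i.e.\ iff $p^u - p^v$ lies in $\Itor \cap \bigcap_i I_i$. Since $u$ and $v$ lie in the same fiber we automatically have $p^u - p^v \in \Itor$ (recall $\Itor = \langle p^u - p^v : Au = Av\rangle$). Hence connectivity of the fiber of $u$ reduces to: for every $v$ with $Av = Au$, the binomial $p^u - p^v$ lies in each $I_i$.

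For the first (sufficiency) bullet, the plan is: suppose $Au$ has all coordinates positive and fix any $v$ in the fiber. For each $i$, I want to show $p^u - p^v \in I_i$, and the cleanest way is to show that both monomials $p^u$ and $p^v$ individually lie in $I_i$. Here is where the hypothesis $(Au_{I_i})_j = 0$ enters. Because $A$ is non-negative, $(Au_{I_i})_j = 0$ forces $p_k \in I_i$ for every variable $p_k$ appearing in $m_{I_i}$ with $A_{jk} > 0$; but by definition of $m_{I_i}$ the variables \emph{not} in $I_i$ are exactly those dividing $m_{I_i}$, so every variable $p_k$ with $A_{jk}>0$ lies in $I_i$. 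Since $(Au)_j > 0$, the monomial $p^u$ must involve some such variable $p_k$ with $A_{jk} > 0$, hence $p^u \in I_i$; the same argument with $v$ (noting $(Av)_j = (Au)_j > 0$) gives $p^v \in I_i$. Therefore $p^u - p^v \in I_i$ for all $i$, and combined with $p^u - p^v \in \Itor$ we get $p^u - p^v \in I_\cB$, so the fiber is connected. I should double-check the edge case $u_{I_i} = \mathbf 0$, i.e.\ $I_i$ contains all the variables: then $I_i = \langle p_1,\dots,p_n\rangle$ (or at least contains the irrelevant ideal), $p^u, p^v \in I_i$ trivially provided $u, v \neq \mathbf 0$, which holds since $Au > 0$.

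For the second (necessity) bullet I argue contrapositively: suppose there is an index $i_0$ such that $(A\hat u_{I_{i_0}})_j > 0$ for \emph{all} $j$; I must produce a fiber with strictly positive marginals that is not connected. The natural candidate is $u \defas \hat u_{I_{i_0}}$ itself (or a suitable positive multiple/translate of it), whose marginal $Au$ is strictly positive by assumption. I need a $v$ in its fiber with $p^u - p^v \notin I_{i_0}$, witnessing disconnectedness. The key input is that $\hat u_{I_{i_0}}$ is the product of the variables \emph{regular} modulo $I_{i_0}$, so the monomial $p^{\hat u_{I_{i_0}}}$ is a nonzerodivisor modulo $I_{i_0}$ and in particular $p^{\hat u_{I_{i_0}}} \notin I_{i_0}$ (as $I_{i_0} \ne (1)$). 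To get a genuinely different $v$ in the same fiber, I would use that $\cB$ spans a positive-dimensional lattice inside $\ker_\ZZ A$ (if $\ZZ\cB = 0$ every fiber is a single point and the statement is vacuous), pick $0 \neq b \in \ZZ\cB$, and replace $u$ by a large enough positive point $u_0$ with $u_0 \pm$ the relevant shifts non-negative, setting $v = u_0 + b$. The delicate point—and the main obstacle—is ensuring that $p^{u_0} - p^{u_0 + b} \notin I_{i_0}$: regularity of a single variable does not immediately prevent a difference of two monomials from lying in the ideal. I expect to handle this by invoking the structure of $I_\cB$ as a \emph{unital} (pure-difference-and-monomial) binomial ideal together with the definition of $\hat m_I$: in a cellular/unital component, the non-regular variables are nilpotent modulo the component, and a pure difference $p^{u_0} - p^{u_0+b}$ lies in the component only if the two monomials become equal after killing nilpotents, which (by choosing $b$ supported on a coordinate where the lattice $\ZZ\cB$ acts nontrivially on regular variables) can be arranged to fail. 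This matching-of-monomials-modulo-nilpotents analysis is the technical heart of the necessity direction; everything else is bookkeeping with the non-negativity of $A$ and Proposition~\ref{prop:DES-criterion}.
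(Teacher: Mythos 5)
Your argument for the first bullet is correct and is essentially the contrapositive form of the paper's proof: you show directly that $(Au)_j>0$ forces a variable $p_k$ with $A_{jk}>0$ to divide $p^u$, and $(Au_{I_i})_j=0$ plus nonnegativity of $A$ forces every such $p_k$ to lie in $I_i$, hence $p^u\in I_i$ (and similarly $p^v\in I_i$), so $p^u-p^v\in I_i$. The paper instead assumes $p^u-p^v\notin I_i$ for some $i$, deduces $p^u\notin I_i$ (say), hence $p^u\mid m_{I_i}^a$ for some $a$, and concludes $(Au)_j=0$ whenever $(Au_{I_i})_j=0$. Same content, opposite direction.

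For the second bullet there is a genuine gap, and you identified its location accurately but did not resolve it. You propose to pick $b\in\ZZ\cB$ and find a large $u_0$ with $p^{u_0}-p^{u_0+b}\notin I_{i_0}$, then reach for cellular/unital structure of the components and a ``killing nilpotents'' analysis to force this. That route does not work in the stated generality: the $I_i$ in the lemma are arbitrary ideals in a decomposition of $I_\cB$, not assumed cellular, unital, or even binomial, so the nilpotency picture you invoke is unavailable. More importantly, you are trying to manufacture a witness binomial from nothing, when the hypothesis $\Itor\not\subseteq I_{i_0}$ already hands you one: since $\Itor$ is generated by pure differences, there exists a binomial $p^u-p^v\in\Itor\setminus I_{i_0}$. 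Now multiply by $\hat m_{I_{i_0}}$. Because $\hat m_{I_{i_0}}$ is by definition a product of nonzerodivisors modulo $I_{i_0}$, multiplication by it preserves non-membership, so $\hat m_{I_{i_0}}(p^u-p^v)=p^{u+\hat u_{I_{i_0}}}-p^{v+\hat u_{I_{i_0}}}\notin I_{i_0}\supseteq I_\cB$. At the same time $(A(u+\hat u_{I_{i_0}}))_j=(Au)_j+(A\hat u_{I_{i_0}})_j>0$ for all $j$ by the standing assumption. Thus $u+\hat u_{I_{i_0}}$ and $v+\hat u_{I_{i_0}}$ lie in the same fiber with strictly positive margins but are not connected, refuting the positive margins property. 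This is the missing step; with it, no analysis of the internal structure of $I_{i_0}$ is needed, and the regularity of a single monomial suffices precisely because you start from a difference that is already outside $I_{i_0}$ rather than trying to create one.
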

\begin{proof}
For the first statement, suppose that $u,v\in\NN^n$ lie in the same
fiber, but are not connected.  Then $p^u-p^v\in \Itor\setminus
I_{\cB}$, and hence $p^u-p^v\notin I_i$ for some~$i$.  In particular,
either $p^u\notin I_i$ or $p^v\notin I_i$.  Assume that we are in the
first case.  Then $p^u$ is a divisor of $m_{I_i}^a$ for some
integer~$a$.  Now, if there exists $j$ such that $(A u_{I_i})_j=0$,
then also $(A u)_j = 0$, since $A$ is non-negative.  This shows the
first statement.

For the second statement, suppose that $(A\hat u_{I_i})_j >0$ for some
$i$ and all $j$.  Let $p^u-p^v$ be a binomial in $\Itor\setminus
I_i$.  Then $(A(u + \hat u_{I_i})) = (A(v + \hat u_{I_i})) > 0$,
but since $\hat m_{I_i}(p^u - p^v)\notin I_i$, the two vectors $u+\hat
u_i$ and $v+\hat u_i$ are not connected by~$\cB$.
\end{proof}

Note the asymmetry between the two directions, the first using
$u_{I_i}$, the second~$\hat{u}_{I_i}$.  If all $I_i$ are prime, then
$m_{I_i} = \hat{m}_{I_i}$.  In this case
Lemma~\ref{lem:positive-margins} gives an equivalent characterization
of the positive margins property.

If the positive margins property is not satisfied, then one might
still hope that the fibers are connected if the marginals are large
enough.  This is the case in Example~\ref{e:simple}.  Unfortunately,
if $I_\cB$ is radical, then this is not true:
\begin{lemma}
\label{lem:strong-pos-marg-radical}
Assume that $\cB$ does not have the positive margins property with
respect to $A$, and suppose that $I_{\cB}$ is radical.  For any $b>0$
there exist $u,v\in\NN^n$ such that $(A u)_j = (A v)_j \ge b$ for all
$j$, but $p^u-p^v\not\in I_{\cB}$.
\end{lemma}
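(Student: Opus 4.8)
The plan is to exploit radicality in the following way. Since $\cB$ fails the positive margins property, by (the contrapositive of) the first bullet of Lemma~\ref{lem:positive-margins} there is some component $I_i$ in the decomposition $I_\cB = (\cap_i I_i)\cap \Itor$ with $(A u_{I_i})_j > 0$ for all $j$; equivalently, there is a witness binomial $p^u - p^v \in \Itor \setminus I_\cB$, so $p^u - p^v \notin I_i$ for some $i$, and we may assume $p^u \notin I_i$ (hence $p^u$ divides a power of $m_{I_i}$, so $\supp(u)$ avoids all variables lying in $I_i$). The goal is to inflate such a witness so that all margins exceed $b$ while staying outside $I_\cB$. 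First I would record that, because $I_\cB$ is radical, $m_{I_i} = \hat m_{I_i}$ for every component, and more importantly $I_\cB$ is generated by pure differences $p^a - p^b$ (its mesoprimary and primary decompositions agree), so membership $p^x - p^y \in I_\cB$ is equivalent to connectivity of $x$ and $y$ by $\cB$ via Proposition~\ref{prop:DES-criterion}.

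The key construction is this. Fix a witness pair $u, v$ with $p^u - p^v \notin I_\cB$. Since $\ZZ\cB$ is saturated, it equals $\ker_\ZZ A$ on its span, but more simply: pick \emph{any} vector $w$ in the fiber of $u$ with all coordinates strictly positive and large — such $w$ exists because one can add a large multiple of a strictly positive lattice vector, or if none exists, one adds a large constant times each standard basis direction's contribution; concretely take $w$ with $w_i \geq c$ for a constant $c$ to be chosen. Then consider the shifted pair $u + w$ and $v + w$. These lie in the same fiber, and $A(u+w) = A(v+w)$ has every coordinate at least $b$ once $c$ is large enough (since $A$ is nonnegative with no zero column — or, handling zero columns separately, since we've augmented appropriately). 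The heart of the matter is to show $p^{u+w} - p^{v+w} \notin I_\cB$, i.e. that translating a disconnected pair by a large positive vector keeps it disconnected.

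The main obstacle is precisely that last claim: adding $w$ could create new connecting paths. Here radicality is essential. I would argue: if $u+w$ and $v+w$ were connected by $\cB$, then $p^{u+w} - p^{v+w} \in I_\cB \subseteq I_i$; since $I_i$ is $P_i$-primary and $\Itor \not\subseteq I_i$ means $I_i$ contains some variable, say those in a set $Z_i$, one checks that $p^w$ is a nonzerodivisor modulo $I_i$ — this is where we choose $w$ to be supported so that $\supp(w) \cap Z_i = \nothing$ is impossible (since $w$ is strictly positive), so instead we use that $I_i$ is \emph{cellular} and radical: modulo $I_i$, each variable is either nilpotent or a nonzerodivisor, and for radical $I_i$ nilpotent means $p_k \in I_i$, i.e. $p_k^{w_k} \in I_i$. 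But then $p^{u+w} \in I_i$ automatically (as it's divisible by $p_k^{w_k}$), forcing $p^{v+w} \in I_i$ too, hence both $p^u, p^v$... — this needs care. The clean route: replace $I_i$ by the \emph{saturation} $(I_i : m_{I_i}^\infty)$, which for radical cellular $I_i$ is prime (it is $P_i$), and note $p^{u+w} - p^{v+w} \in I_\cB$ would give $p^{u+w} - p^{v+w} \in P_i$; but $P_i$ is a prime not containing $m_{I_i}$, and $p^w$ is a unit in the localization, so $p^{u+w} \equiv p^{v+w} \pmod{P_i}$ iff $p^u \equiv p^v \pmod{P_i}$, contradicting $p^u - p^v \notin I_i$ once we verify $p^u - p^v \notin P_i$ — and \emph{that} follows because $p^u \notin I_i$ means $p^u \notin P_i$ (as $p^u$ divides a power of $m_{I_i}$, which is a nonzerodivisor mod the radical ideal $P_i$, so $p^u \notin P_i$), while if $p^v \in P_i$ then $p^u = (p^u - p^v) + p^v$ would force... one concludes $p^u - p^v \notin P_i$ cleanly since $p^u \notin P_i$ and in a domain $p^u - p^v = 0$ with $p^v$ possibly $0$ is impossible. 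Thus $u+w, v+w$ are disconnected, with all margins $\geq b$, which is what we want. The only remaining subtlety, the possibility of zero columns of $A$ (a variable $p_k$ with $Ae_k = 0$) forcing that margin to stay zero, is handled by noting such a column would make the corresponding variable nilpotent in \emph{every} non-toric component and does not obstruct the argument — or simply by the standing assumption that $A$ has no zero column, which holds in all cases of interest.
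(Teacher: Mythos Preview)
There is a genuine gap. Your shift vector $w$ is chosen strictly positive in every coordinate, and you then try to argue that $p^{u+w}-p^{v+w}\notin I_{\cB}$. This fails outright: every non-toric minimal prime $P_j$ of the radical ideal $I_{\cB}$ contains at least one variable, so a strictly positive $w$ gives $p^w\in P_j$ for \emph{every} such $j$, whence $p^{u+w}-p^{v+w}=p^w(p^u-p^v)\in \bigcap_j P_j$. Since the binomial also lies in $\Itor$, it lies in $I_{\cB}$. In other words, translating by a large strictly positive vector \emph{connects} the pair rather than keeping it disconnected; your claim that ``$p^w$ is a unit in the localization'' cannot hold, because $p^w\in P_i$ is zero, not a unit, modulo~$P_i$.

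The paper's proof uses precisely the vector you identified at the outset and then abandoned. Take $w=c\,u_{P_i}$, so that $p^w=m_{P_i}^c$. This $w$ is supported only on variables \emph{outside} $P_i$, so $m_{P_i}^c$ is a nonzerodivisor modulo the prime $P_i$; hence for any binomial $p^u-p^v\in\Itor\setminus P_i$ one gets $m_{P_i}^c(p^u-p^v)\notin P_i$, and therefore $\notin I_{\cB}$. At the same time, the condition $(Au_{P_i})_j>0$ for all $j$---which you correctly extracted from Lemma~\ref{lem:positive-margins}---is exactly what makes $(A(u+c\,u_{P_i}))_j\ge b$ for $c$ large. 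The two tasks, pushing all margins above $b$ and keeping the binomial out of some minimal prime, are handled simultaneously by the single vector~$u_{P_i}$; an arbitrary positive $w$ cannot do the second.
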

\begin{proof}
Let $I_{\cB} = \Itor \cap(\cap_i P_i)$ be the decomposition into
minimal primes.  By assumption and Lemma~\ref{lem:positive-margins},
for some $i$ the vector $u_{P_i}$ satisfies $(A u_{P_i})_j>0$ for
all~$j$.  For any binomial $p^u-p^v\in \Itor\setminus P_i$ there
exists a $c$ large enough such that the exponents satisfy
$(A(u+cu_{P_i}))_j = (A(v+cu_{P_i}))_j \geq b$ for all $j$.  Since
$P_i$ is prime, $m_{P_i}$ is regular and therefore
$m_{P_i}^c(p^u-p^v)\notin P_i$.  Hence $u+cu_i$ and $v+cu_i$ are not
connected.
\end{proof}

Example~\ref{e:simple} shows that the radicality assumption in
Lemma~\ref{lem:strong-pos-marg-radical} is necessary.


\section{Graphical models and the global Markov statements}
\label{sec:graphicalmodels}

Let $V=[N] := \{1,\dots,N\}$ for some integer $N>1$.  For each $v\in
V$ let $X_v$ be a discrete random variable taking values in $[d_v]$, $d_v\ge
2$.
Let $d = (d_{v})_{v \in V }$ and
let $\cX = \prod_{v \in V} [d_{v}]$.  For any $W \subseteq V$ 
the random vector $X_W = (X_v)_{v\in W}$ takes values in $\cX_{W} = \prod_{v \in W} [d_{v}]$.
If $x \in \cX$ and $W \subseteq V$, let
$x_{W} := (x_{v})_{v \in W}$.  With $h=|\cX|$, denote $\RR^{h} :=
\bigotimes_{v \in V} \RR^{d_{v}}$ the space of real $d_{v_{1}} \times
\cdots \times d_{v_{r}}$ arrays
of the form $p = (p_{x})_{x \in \cX}$.  Then $\RR^h$ contains the
probability simplex
\[
\Delta_{h-1} := \left\{ p \in \RR^{h} :  \sum_{x \in \cX} p_{x} = 1, p_{x} \geq 0 \mbox{ for all }
x \in \cX \right\}.
\]
Each $p \in \Delta_{h-1}$ represents a joint probability
distribution of $(X_v)_{v\in V}$.
The dependencies among $X_1,\dots,X_N$ are often visualized 
by an undirected graph $G = (V,E)$.
In this paper, all graphs are undirected and simple.
There are two ways that such a graph can be interpreted as a
statistical model, i.e.~as a family of joint probability
distributions.  The first leads to the global Markov model, the second
to the graphical model.

The \emph{global Markov model} associates to $G$ a family of
conditional independence statements among the random variables.  Let
$V=A\cup B\cup C$ be a partition of $V$ (into disjoint possibly empty
sets), and let $p\in\Delta_{h-1}$.  We write $\ind{X_{A}}{X_{B}}[X_{C}]
$ and say that \emph{ $X_A$ is independent of $X_B$ given
$X_C$} if and only if
\begin{equation*}
p_{x_A^{\phantom{\prime}}x_B^{\phantom{\prime}}x_C^{\phantom{\prime}}}p_{x_A'x_B'x_C^{\phantom{\prime}}} - p_{x_A^{\phantom{\prime}}x_B'x_C}p_{x_A'x_B^{\phantom{\prime}}x_C^{\phantom{\prime}}} = 0
\end{equation*}
for all possible values $x_A^{\phantom{\prime}},x_A',x_B^{\phantom{\prime}},x_B',x_C^{\phantom{\prime}}$ of $X_A,X_B,X_C$,
respectively.  See~\cite{drton09:_lectur_algeb_statis} for an
introduction to conditional independence from an algebraic point of
view.

For each $x_{c} \in \cX_{C}$ we construct a matrix $P^{A,B,x_{C}}$ of
format $\abs{\cX_A}\times \abs{\cX_B}$, with columns indexed by
$\cX_{A}$ and rows indexed by~$\cX_{B}$.  The entry in the $x_{A},
x_{B}$ position of $P^{A,B,x_{C}}$ is the
probability~$p_{x_{A}x_{B}x_{C}}$.  The conditional independence
statement $\ind{X_{A}}{X_{B}}|X_{C}$ is equivalent to the condition
that for all $x_{C} \in \cX_{C}$, $\rank(P^{A,B,x_{C}}) \leq 1$.  If
$C = \emptyset$ we get one matrix, and in general we get $|\cX_C|$
matrices.

Let $I_{\ind{X_{A}}{X_{B}}[X_{C}]}$ be the ideal in $\RR[p_x:x\in\cX]$
generated by the $2 \times 2$ minors of all the matrices
$P^{A,B,x_{C}}$.  If $\cC$ is a collection of conditional independence
statements, we let
\[I_{\cC} =  \sum_{(\ind{X_{A}}{X_{B}}|X_{C}) \in \cC}  I_{\ind{X_{A}}{X_{B}}|X_{C}}.\]
To the graph $G$ we associate the \emph{global Markov statements}
\[
\gl(G) =  \{ \ind{X_{A}}{X_{B}}[X_{C}] :  C \mbox{ separates } A \mbox{ and } B \mbox{ in } G, A\cup B\cup C = V \}.
\]
Separation means that every path in $G$ from some vertex $a \in A$ to
some vertex $b \in B$ traverses some vertex $c \in C$.
The \emph{global Markov model} of $G$ is the intersection of
$\Delta_{h-1}$ and the variety of $I_{\gl(G)}$; i.e.\ it
consists of all joint probability distributions
satisfying~$\gl(G)$.
Note that, while most statements in this paper are independent of the
choice of the field~$\kk$, only the variety over the real numbers has
a natural statistical interpretation.
In general, conditional independence
statements are defined for arbitrary subsets $A,B,C\subseteq V$,  and
the global Markov statements are defined without the requirement $A\cup B\cup C = V$.
However, if $A,B,C\subseteq V$ are disjoint subsets such that
$A\cup B\cup C\neq V$ and such that $C$ separates $A$ and $B$, then
the statement $\ind{X_A}{X_B}[X_C]$ is implied by the statements in
$\gl(G)$, see~\cite[Lemma~7.10]{TFP-II}.

Graphical models are defined parametrically: Let $\cC(G)$ be the set
of cliques of~$G$, where a clique is a set of vertices $W \subseteq V$
such that if $v_{1}, v_{2} \in W$, $v_1\neq v_2$, then $(v_{1},v_{2})$ is an edge
of~$G$.
To each clique $C \in \cC(G)$ and each $x_{C} \in \cX_{C}$ associate a
parameter $\theta^{C}_{x_{C}}$ (or an indeterminate, depending on the
context).  Let $\theta^{C} := (\theta_{x_{C}}^C)_{x_{C} \in \cX_{C}}$.
The image of the polynomial map
\begin{equation*}
\phi_{G}:  \bigoplus_{C \in \cC(G)} \RR^{d_{C}}  \rightarrow  \RR^{h},\quad
\phi_{G,x} (\theta^{C_{1}}, \ldots, \theta^{C_{r}}) =  
\prod_{C \in \cC(G)}  \theta^{C}_{x_{C}},
\end{equation*}
intersected with the probability simplex $\Delta_{h-1}$ is the
parametrized graphical model~$\cM^{*}_{G}$.
In other words, $\cM_G^\ast$ consists of all probability
distributions $p$ whose components can be written as a product of the
form
$p_x = \prod_{W\in\cC}f_W(x)$,
where $f_W$ are nonnegative functions that only depend on $x_v$ for
$v\in W$.  See~\cite{lauritzen96} for more about graphical models.

The map $\phi_G$ induces the ring homomorphism
\begin{equation*}
\phi_{G}^*:  \RR[p_{x} : x \in \cX] \rightarrow 
\RR[\theta^{C}_{y_{C}} : C \in \cC(G), y_{C} \in \cX_{C} ],\quad
p_{x}  \mapsto  \prod_{C \in \cC(G)} \theta^{C}_{x_{C}},
\end{equation*}
and its kernel $I_{G} = \ker \phi_{G}^{*}$
is the vanishing ideal of the image.  Then $\cM_{G} = V(I_{G}) \cap
\Delta_{h-1}$ is the closure of the parametrized graphical model
$\cM^{*}_{G}$.  We call $\cM_G$ the \emph{graphical model} of~$G$.
Note that other authors use the term ``graphical model'' only for the
set of strictly positive probability distributions in~$\cM_G$.

The ring homomorphism $\phi_G^*$ is of the
form~\eqref{eq:toricMorphism}; hence $I_G$ is a toric ideal.
The corresponding matrix $A_G$ has a natural interpretation: If $p$ is
a joint probability distribution of $(X_v)_{v\in V}$, then the product
$A_Gp$ contains, as subvectors, the marginal distribution induced by
$p$ on any clique of $G$.  This collection of marginals are 
the \emph{G-marginals} of~$p$.  The cone generated by the columns of
$A_G$ is known as the \emph{marginal cone}.

It is easy to check that the graphical model is a subset of the global
Markov model.  Moreover, the Hammersley-Clifford Theorem \cite{Besag74} says that if
a probability distribution is strictly positive (that is $p_x > 0$ for
all $x$ in the state space), then $p$ lies in the graphical model if
and only if $p$ lies in the global Markov model.
Algebraically, this theorem says that $I_G$ equals the toric component
of~$I_{\gl(G)}$.

In general, $I_{\gl(G)}\subsetneq I_G$, in which case, there may be
probability distributions which satisfy the conditional independence
statements $\gl(G)$, but are not in the closure of the graphical
model.  In fact, $I_{\gl(G)}=I_{G}$ if and only if $G$ is a chordal
graph~\cite{geigermeeksturmfels06}.  As suggested in \cite[Chapter
8]{sturmfels02:_solvin_system_polyn_equat} and
\cite{geigermeeksturmfels06}, the discrepancy between the two models
can be analyzed using primary decomposition.


\section{The positive margins property and graphical models}
\label{sec:pos-margins-and-graphs}

In this section we study which global Markov models have the positive
margins property.  Let $G$ be a graph with vertex set $V = [N]$, and
let $d=(d_v)_{v\in V}\in\NN^N$ with $d_v\ge 2$ for all~$v$.  We say
that $(G,d)$ has the \emph{positive margins property}, if the
quadratic moves $\cB_{\gl(G)}$ have the
positive margins property with respect to the canonical matrix~$A_G$,
 and $(G,d)$ has the \emph{interior point
property} if $\cB_{\gl(G)}$ has the interior point property.

Our main tool is Lemma~\ref{lem:positive-margins} which we translate 
here to graphical models.  As all global Markov ideals with known
primary decompositions are radical, we only formulate the radical
case.
\begin{lemma}
\label{lem:graphical-positive-margins}
Let $I_{\gl(G)}=(\cap_{i=1}^c P_i)\cap I_G$ be a
decomposition into prime ideals such that $I_G\not\subseteq P_i$ for
all~$i$.  Then $(G,d)$ has the positive margins property if and only
if for all $i=1,\dots,c$ the $G$-margins of $u_{P_i}$ are not strictly
positive.
\end{lemma}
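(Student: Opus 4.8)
The plan is to specialize Lemma~\ref{lem:positive-margins} to the situation at hand, using that all the ideals $P_i$ and $I_G$ are prime. First I would observe that the hypotheses of Lemma~\ref{lem:positive-margins} are met by taking $\cB = \cB_{\gl(G)}$, $A = A_G$ (which is non-negative, as it is a $0$/$1$ matrix of marginal sums), $I_\cB = I_{\gl(G)}$, $I_i = P_i$, and $\Itor = I_G$: indeed, $I_G$ is the toric ideal of $\cB_{\gl(G)}$ by the Hammersley--Clifford discussion in Section~\ref{sec:graphicalmodels}, and the standing assumption $I_G\not\subseteq P_i$ matches $\Itor\not\subseteq I_i$. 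Since each $P_i$ is prime, $m_{P_i}=\hat m_{P_i}$ and therefore $u_{P_i}=\hat u_{P_i}$, so the two implications of Lemma~\ref{lem:positive-margins} collapse into a single equivalence.

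The second step is to translate the condition ``there exists $j$ such that $(A_G u_{P_i})_j = 0$'' into the stated condition ``the $G$-margins of $u_{P_i}$ are not strictly positive.'' Here I would invoke the interpretation of $A_G$ recorded just before Definition~\ref{def:pos-margins} and in Section~\ref{sec:graphicalmodels}: the vector $A_G u$ is (the concatenation of) the $G$-margins of $u$, i.e.\ its marginals on the cliques of $G$. Hence $(A_G u_{P_i})_j = 0$ for some $j$ is precisely the statement that some coordinate of some clique-marginal of $u_{P_i}$ vanishes, which is exactly what ``the $G$-margins of $u_{P_i}$ are not strictly positive'' means. Running this equivalence through the biconditional from Lemma~\ref{lem:positive-margins} yields the claim: $(G,d)$ has the positive margins property iff for every $i$ some coordinate of $A_G u_{P_i}$ is zero, iff for every $i$ the $G$-margins of $u_{P_i}$ are not strictly positive.

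I do not expect a genuine obstacle here; the lemma is essentially a dictionary entry. The one point that deserves a sentence of care is why $u_{P_i}$ is well-defined and why $m_{P_i}=\hat m_{P_i}$, i.e.\ that for a prime ideal $P$ every variable not in $P$ is regular modulo $P$ — this is immediate since in a domain $R/P$ every nonzero element, in particular the image of any $p_i\notin P$, is a nonzerodivisor, so $(P:p_i)=P$. The only other thing to spell out is that $\cB_{\gl(G)}$ spans a saturated sublattice of $\ker_\ZZ A_G$ (it is the kernel lattice of a graphical model, hence saturated), so that Definition~\ref{def:pos-margins} and Lemma~\ref{lem:positive-margins} genuinely apply. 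With these remarks in place the proof is a direct quotation of Lemma~\ref{lem:positive-margins} in the prime case together with the marginal interpretation of $A_G$.
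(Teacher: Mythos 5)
Your proof is correct and takes essentially the same approach as the paper: Lemma~\ref{lem:graphical-positive-margins} is stated there without proof precisely because it is the direct specialization of Lemma~\ref{lem:positive-margins} to the prime case, with $(A_G u_{P_i})_j = 0$ unwound as a vanishing $G$-margin, exactly as you write. Your added remarks (that $m_P=\hat m_P$ for prime $P$ because variables outside a prime are nonzerodivisors modulo it, and that $I_G$ plays the role of $\Itor$ via the Hammersley--Clifford discussion) are the right justifications for why the two implications of Lemma~\ref{lem:positive-margins} collapse to a single biconditional here.
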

Table~\ref{tab:results} summarizes some of our computational results.
We computed Markov bases with 4ti2~\cite{4ti2} and binomial primary
decompositions using the package
\texttt{Binomials}~\cite{kahle11:binom-jsag} in Macaulay2~\cite{M2}.
Then we used the Macaulay2 package
\texttt{Polyhedra}~\cite{birkner09:_polyh} to check the condition of
Lemma~\ref{lem:graphical-positive-margins} applied to the primary
decomposition.
\begin{table}[htb]
\centering
\begin{center}
\begin{tabular}{|l|c|c|c|c|}
\hline
graph & pos. margins & interior point & $I_{\gl(G)}$ radical & \# of min. primes \\
\hline
$C_{4}$ &  yes & yes & yes &  $9$ \\
square-pyramid & yes & yes & yes & $81$ \\
$G_{48}$ & yes & yes & yes & $201$ \\
$K_{2,3}$ & no & yes & yes & $37$ \\
$C_5$  & yes   & yes & yes & $41$ \\
\hline
\end{tabular}
\end{center}
\caption{Properties of binary graphical models for selected irreducible graphs.}
\label{tab:results}
\end{table}
The binary graphical model of every graph on five or fewer vertices
that is not mentioned in Table~\ref{tab:results} satisfies the
positive margins property, and the corresponding global Markov ideals
are radical. 

These results suggest two general questions:
\begin{itemize}
\item Is it true that for any graphical model the ideal $I_{\gl(G)}$
is radical~\cite{TFP-II}?
\item Does every graphical model have the interior point property?
\end{itemize}
The answers to both questions are negative in general.
Example~\ref{ex:K33} discusses the binary CI ideal of $K_{3,3}$ which
is not radical.  Theorem~\ref{thm:badexample} settles the second
question.

Before discussing the graphs of Table~\ref{tab:results}, we treat
reducible graphs.  Note that
all graphs on five or fewer vertices not contained in this
table are either complete or decomposable, in the following sense:
\begin{defn}
A graph $G=(V,E)$ is \emph{reducible} if there exist proper subsets
$V_1,V_2\subset V$ such that $V_1\cap V_2$ is a clique, and such that
$G$ is the union of the 
subgraphs $G_1$ and $G_2$ induced on $V_1$ and $V_2$.
Moreover, $G$ is \emph{decomposable} if $G_1$ and $G_2$ are complete
or decomposable.
\end{defn}

\begin{lemma}
\label{lem:TFP}
Let $\kk$ be algebraically closed.  Assume that $G$ is reducible into
$G_1=(V_1,E_1)$ and $G_2=(V_2,E_2)$.  If both $I_{\gl(G_1)}$ and
$I_{\gl(G_2)}$ are radical, then $I_{\gl(G)}$ is radical.
\end{lemma}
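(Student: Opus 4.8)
The plan is to reduce radicality of $I_{\gl(G)}$ to a statement about the intersection of the two subgraph ideals, and then to exhibit $I_{\gl(G)}$ as exactly such an intersection. First I would set up notation: write $V = V_1 \cup V_2$ with $S := V_1 \cap V_2$ a clique, so that every vertex of $V_1 \setminus V_2$ is separated from every vertex of $V_2 \setminus V_1$ by $S$ in $G$. The key structural observation is that the global Markov statements of $G$ split according to the reducible decomposition: a separator $C$ in $G_1$ is still a separator (of the same sets, padded by the extra vertices on the appropriate side) in $G$, and likewise for $G_2$, while any ``new'' separation in $G$ that genuinely uses both halves factors through the clique separator $S$. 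Using the remark in the excerpt that a separation statement $\ind{X_A}{X_B}[X_C]$ with $A\cup B\cup C\ne V$ is already implied by $\gl(G)$ (cite \cite[Lemma~7.10]{TFP-II}), I would argue that $I_{\gl(G)}$ is generated by $I_{\gl(G_1)}$ and $I_{\gl(G_2)}$, suitably interpreted inside $\kk[p_x : x\in\cX]$. Concretely, each $2\times 2$ minor generator of $I_{\gl(G_1)}$, originally living in the polynomial ring on the smaller state space $\cX_{V_1}$, lifts to a family of minors on $\cX$ by freely fixing the coordinates in $V_2\setminus V_1$; and the claim is $I_{\gl(G)} = \widetilde{I_{\gl(G_1)}} + \widetilde{I_{\gl(G_2)}}$ where $\widetilde{(\cdot)}$ denotes this lift.

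The second ingredient is a lemma saying that if $J_1, J_2$ are radical ideals in $\kk[\cX_{V_1}]$, $\kk[\cX_{V_2}]$ respectively, then their lifts to $\kk[\cX]$ have a radical sum, \emph{provided the gluing is along a common ``clique'' set of coordinates} so that the tensor-like structure is clean. Here is where the clique hypothesis on $S$ is essential and where I expect the main obstacle to lie: one wants to say that $\kk[\cX] \cong \kk[\cX_{V_1}] \otimes_{\kk[\cX_S]} \kk[\cX_{V_2}]$ in a way compatible with the two ideals, and then invoke that radicality is preserved under such a fibered tensor product over a polynomial ring when $\kk$ is algebraically closed (which is why algebraic closure is assumed). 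Since $S$ is a clique, there is no conditional independence constraint imposed \emph{within} $S$, so the coordinates indexed by $\cX_S$ are genuinely ``free'' and the fibered product is flat — this is the technical heart. I would either cite an existing gluing result for toric/binomial ideals along faces, or prove it by hand using that $V(I_{\gl(G)})$ is, fiberwise over the torus of $\cX_S$-coordinates, a product of the two smaller varieties, and that a scheme which is fiberwise reduced over a reduced base (with flatness) is reduced.

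The order of steps, then, is: (1) establish the generator-level decomposition $I_{\gl(G)} = \widetilde{I_{\gl(G_1)}} + \widetilde{I_{\gl(G_2)}}$ using the structure of separators in a reducible graph plus the cited implication lemma; (2) identify $\kk[\cX]$ with the fibered tensor product $\kk[\cX_{V_1}] \otimes_{\kk[\cX_S]} \kk[\cX_{V_2}]$ and check that under this identification the two lifted ideals are the extensions of the original radical ideals; (3) apply (or prove) the fibered-tensor-product radicality lemma to conclude. The main obstacle is step (3): controlling radicality across the fiber product, in particular handling the locus where some $\cX_S$-marginal coordinate vanishes, where the fibers can degenerate; the clique hypothesis and algebraic closedness are exactly what rescue this, and I would spell out the argument using cellular/primary decomposition of the lifted ideals on each open cell of the $\cX_S$-coordinate stratification, matching it up with the corresponding decompositions of $I_{\gl(G_1)}$ and $I_{\gl(G_2)}$.
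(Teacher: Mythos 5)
Your high-level strategy is aimed in the right direction, but the argument as written has both an incorrect technical foundation and an unresolved core step, and it misses the machinery that the paper actually invokes.

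The paper's proof is a one-liner: by \cite[Corollary~7.13]{TFP-II}, when $G$ is reducible along a clique $S=V_1\cap V_2$, the global Markov ideal $I_{\gl(G)}$ \emph{is} the toric fiber product $I_{\gl(G_1)}\times_{\cA}I_{\gl(G_2)}$ (where the grading comes from the $S$-marginal), and the toric fiber product commutes with finite intersections and sends a pair of prime ideals to a prime ideal. Intersecting the prime decompositions of $I_{\gl(G_1)}$ and $I_{\gl(G_2)}$ term by term then exhibits $I_{\gl(G)}$ as a finite intersection of primes, hence radical. Algebraic closedness is used because primality of the toric fiber product of primes is a geometric statement.

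There are two concrete gaps in your proposal. First, the identification $\kk[\cX]\cong\kk[\cX_{V_1}]\otimes_{\kk[\cX_S]}\kk[\cX_{V_2}]$ is not a valid description of the ambient ring: there is no ring homomorphism $\kk[\cX_S]\to\kk[\cX_{V_i}]$ that makes $\kk[\cX_{V_i}]$ a $\kk[\cX_S]$-algebra in a way compatible with marginalization (marginalization is linear, not multiplicative), and even dimension-counting does not match---the tensor product you describe is not the polynomial ring on the set $\cX$. The correct formalization is the toric fiber product: an auxiliary polynomial ring whose variables are indexed by pairs $(x_{V_1},x_{V_2})$ with matching $S$-coordinates, mapping into $\kk[\cX_{V_1}]\otimes_{\kk}\kk[\cX_{V_2}]$ by $p_x\mapsto p_{x_{V_1}}\otimes p_{x_{V_2}}$, with the TFP ideal defined as a preimage. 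Your ``lift plus lift'' generating set would also need to include the quadratic relations coming from the separation $\ind{X_{V_1\setminus S}}{X_{V_2\setminus S}}[X_S]$ (the $\Quad$ part of the TFP); these are not lifts of anything in $I_{\gl(G_1)}$ or $I_{\gl(G_2)}$.

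Second, and more seriously, you explicitly flag step (3)---that the gluing preserves radicality---as ``the main obstacle'' and then do not close it. The flatness-plus-fiberwise-reducedness sketch is not sound as stated: a flat family over a reduced base with geometrically reduced fibers does give a reduced total space, but you have neither flatness of $V(I_{\gl(G)})\to\Spec\kk[\cX_S]$ nor an argument for the degenerate strata where $S$-marginals vanish, which you yourself identify as the problematic locus. This is precisely what the toric fiber product framework handles uniformly (no stratification is needed): the key nontrivial input is that the toric fiber product of two prime ideals is prime, combined with its distributivity over finite intersections, and that is what your proof would need to establish or cite. Without it, the argument does not go through.
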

\begin{proof}
This is \cite[Corollary~7.13]{TFP-II} together with the observation
that the toric fiber product of prime ideals is a prime ideal.
\end{proof}

\begin{lemma}
\label{lem:reducible-posmarg}
If $G$ is reducible into $G_1=(V_1,E_1)$ and $G_2=(V_2,E_2)$ and if
both $(G_1,(d_v)_{v\in V_1})$ and $(G_2,(d_v)_{v\in V_2})$ have the
positive margins property, then $(G,d)$ also has the positive margins
property.
\end{lemma}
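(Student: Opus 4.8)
The plan is to exploit the toric fiber product structure of reducible graphical models. Write $S = V_1 \cap V_2$. Since $G = G_1 \cup G_2$ and $S$ is a clique, every clique of $G$ is a clique of $G_1$ or of $G_2$, so the $G$-margins of a table $u$ are exactly its $G_1$-margins together with its $G_2$-margins, overlapping in the $S$-margin. In particular $u$ has strictly positive $G$-margins if and only if $u_{V_1}$ has strictly positive $G_1$-margins and $u_{V_2}$ has strictly positive $G_2$-margins, and then the $S$-margin of $u$ is strictly positive. The structural input, already used in Lemma~\ref{lem:TFP} and established in \cite{TFP-II}, is that $I_{\gl(G)}$ is the toric fiber product of $I_{\gl(G_1)}$ and $I_{\gl(G_2)}$ along the clique $S$; correspondingly the $A_G$-fiber of $u$ is a toric fiber product of $A_{G_1}$- and $A_{G_2}$-fibers.

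The main route is a direct connectivity argument. First I would locate the relevant moves inside $\cB_{\gl(G)}$. Because $S$ separates $V_1 \setminus S$ from $V_2 \setminus S$ in $G$, the statement $\ind{X_{V_1 \setminus S}}{X_{V_2 \setminus S}}[X_S]$ lies in $\gl(G)$, contributing the ``quadratic'' moves that within each slice $\{x : x_S = s\}$ act as the classical $2 \times 2$ swap moves on a two-way table. Moreover, every global Markov statement of $G_1$ becomes a global Markov statement of $G$ after enlarging its conditioning set by $V_2 \setminus S$ (a short separation argument, using that there are no edges between $V_1 \setminus S$ and $V_2 \setminus S$), and the moves of the enlarged statement are precisely the moves of the original $G_1$-statement supported in a single slice $\{x : x_{V_2 \setminus S} = t\}$; thus $\cB_{\gl(G)}$ contains these ``lifts'' of $\cB_{\gl(G_1)}$, and symmetrically of $\cB_{\gl(G_2)}$. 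I would then run the standard toric-fiber-product connectivity argument (as for Markov bases of toric fiber products): the quadratic moves connect tables with the same slice margins --- here one uses that $2 \times 2$ swap moves connect \emph{all} two-way tables with fixed row and column sums, and that the positivity of the $S$-margin keeps every slice nonempty --- while the lifts of $\cB_{\gl(G_1)}$ and $\cB_{\gl(G_2)}$ transfer the remaining connectivity problem to the corresponding problems for $G_1$ and $G_2$. Those subproblems concern fibers that again have strictly positive margins, so the positive margins property of $(G_1,d)$ and $(G_2,d)$ applies and finishes the argument.

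When the factor ideals happen to be radical --- which is the case in all applications of this lemma in the paper --- there is a shorter route through Lemma~\ref{lem:positive-margins}. Since the toric fiber product of prime ideals is prime and commutes with intersections \cite{TFP-II}, writing $I_{\gl(G_k)} = (\bigcap_\alpha P^k_\alpha) \cap I_{G_k}$ as an intersection of minimal primes gives $I_{\gl(G)} = \bigcap_{\alpha,\beta} (P^1_\alpha \times_S P^2_\beta)$, with $I_G = I_{G_1} \times_S I_{G_2}$ and $I_G$ contained in none of the other components. Fix a component $J = P^1_\alpha \times_S P^2_\beta$ with, say, $P^1_\alpha \neq I_{G_1}$. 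A variable $p_x$ lies outside $J$ only if $p_{x_{V_1}}$ lies outside $P^1_\alpha$, so the support of $u_J$ maps, under $x \mapsto x_{V_1}$, into the support of $u_{P^1_\alpha}$. By Lemma~\ref{lem:positive-margins} applied to $(G_1,d)$, the vector $u_{P^1_\alpha}$ has a vanishing margin on some clique $W_1 \subseteq V_1$ at some value $w_1^*$, i.e.\ no element of its support takes the value $w_1^*$ on $W_1$; hence no $x$ in the support of $u_J$ does either, so the $G$-margin of $u_J$ on $W_1$ at $w_1^*$ vanishes. The symmetric statement holds when $P^2_\beta \neq I_{G_2}$, and Lemma~\ref{lem:positive-margins} now yields the positive margins property of $(G,d)$. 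This is, in effect, the graphical-model version of the argument that proves Lemma~\ref{lem:graphical-positive-margins} under the radicality hypothesis.

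The main obstacle is in the connectivity route: one must check carefully that $\cB_{\gl(G)}$ contains the quadratic gluing moves together with all single-slice lifts of $\cB_{\gl(G_1)}$ and $\cB_{\gl(G_2)}$, and then adapt the toric fiber product connectivity proof from the ``Markov basis'' setting to the ``positive margins'' setting, verifying that the positivity of \emph{all} $G$-margins --- not merely of the $S$-margin --- is exactly what makes the induced subfibers have positive margins and rules out degenerate slices. In the primary decomposition route, the corresponding delicate point is the asymmetry between $u_I$ and $\hat u_I$ in Lemma~\ref{lem:positive-margins}, which is why radicality of the factor ideals is assumed there.
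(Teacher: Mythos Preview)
Your main ``direct connectivity'' route is exactly the paper's approach: the paper invokes the toric fiber product Markov-basis construction of \cite[Theorem~2.9]{sullivant07:_toric} and notes that the two things to verify are (1) positive $G$-margins restrict to positive $G_1$- and $G_2$-margins, and (2) the construction applied to $\cB_{\gl(G_1)}$ and $\cB_{\gl(G_2)}$ lands inside $\cB_{\gl(G)}$---both of which you spell out. Your alternative primary-decomposition route is not in the paper and, as you correctly flag, needs the extra radicality hypothesis on the factor ideals, so it does not prove the lemma as stated.
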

\begin{proof}
The proof is essentially the same as that of~\cite[Theorem~2.9]{sullivant07:_toric}, which
shows how to obtain a Markov basis of $G$ from Markov bases of $G_1$
and~$G_2$.  The fact that we do not have Markov bases here is
compensated by the fact that we do not want to connect all fibers, but
just those fibers with positive margins.  In order to apply the proof
of~\cite[Theorem~2.9]{sullivant07:_toric} two
things need to be checked: (1)~A fiber with positive $G$-margins restricts to fibers with
positive $G_1$-margins and $G_2$-margins, respectively.  (2)~When the
construction that turns Markov bases of $G_1$ and~$G_2$ into a Markov
basis of $G$ is applied to $\cB_{\gl(G_1)}$ and $\cB_{\gl(G_2)}$, then
the result is a subset of $\cB_{\gl(G)}$.  For brevity we omit the
details.
\end{proof}
By Lemmas~\ref{lem:TFP} and Lemma~\ref{lem:reducible-posmarg}, decomposable graphs
have the positive margins property and radical global Markov ideals (for all $d$).  On four or fewer
vertices there is only one graph, the four-cycle~$C_4$, which is
neither complete nor decomposable.  The following theorem is proved in
Section~\ref{sec:binary-n-cycles}.
\begin{thm}
\label{thm:Cn-pos-marg}
For $N\ge 4$ the binary $N$-cycle model has the positive margins
property.  Its global Markov ideal $I_{\gl(C_N)}$ is radical.
\end{thm}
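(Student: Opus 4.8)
The plan is to establish the two assertions of Theorem~\ref{thm:Cn-pos-marg} in sequence, and both will flow from an explicit primary decomposition of $I_{\gl(C_N)}$ together with a Markov basis for the toric ideal $I_{C_N}$. First I would construct a Markov basis for $\ZZ\cB_{\gl(C_N)} = \ker_\ZZ A_{C_N}$ explicitly; for the binary $N$-cycle the marginal-computing matrix is the edge-sums-and-vertex-sums matrix, and the fibers admit a transparent combinatorial description in terms of cyclic sequences of $0/1$-arrays. The key structural input is that for a cycle the global Markov statements are the statements $\ind{X_i}{X_{\text{rest}}}[X_{i-1},X_{i+1}]$ for each vertex (since deleting two adjacent neighbors of $i$ disconnects $i$ in $C_N$), so $\cB_{\gl(C_N)}$ consists of $N$ families of quadratic ``sliding'' moves. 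I would describe a Markov basis for the full toric ideal, which will be $\cB_{\gl(C_N)}$ augmented by one family of longer ``cyclic'' moves that wrap around the cycle (these are exactly the moves obstructed by the non-toric primary components).

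Next I would compute the minimal primary decomposition $I_{\gl(C_N)} = I_{C_N} \cap (\cap_i P_i)$, where the $P_i$ are the non-toric minimal primes. The essential claim is that each such $P_i$ is of the form (toric ideal of a smaller/collapsed model) $+$ (monomials forcing certain slices of the cycle to vanish): geometrically, these components correspond to tables that are ``degenerate'' along some arc of the cycle, which is precisely what prevents a cyclic move from being realized by quadratic sliding moves. Radicality then follows because each component is itself prime (a toric ideal plus a monomial prime on a coordinate subspace is prime, being the vanishing ideal of a toric variety inside a coordinate subspace), and the intersection of these primes is shown to equal $I_{\gl(C_N)}$ by a dimension/degree argument or by a direct Gr\"obner-basis computation of the quotient; the $N=4$ case in Table~\ref{tab:results} with its $9$ minimal primes, and $N=5$ with $41$, serve as anchoring base cases, and I would set up an induction on $N$, reducing a length-$N$ cycle to a length-$(N-1)$ cycle after localizing away from one vertex's variables.

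For the positive margins property I would invoke Lemma~\ref{lem:graphical-positive-margins}: it suffices to check that for each non-toric minimal prime $P_i$, the exponent vector $u_{P_i}$ of the product of variables not in $P_i$ has at least one vanishing $G$-margin. Since each $P_i$ contains a monomial forcing some coordinate slice (a fiber of the array over some vertex value) to be entirely zero, the marginal of $u_{P_i}$ over that vertex has a zero entry, so the $G$-margins of $u_{P_i}$ are not strictly positive. This is the same phenomenon seen for $C_4$ in the table (positive margins: yes), and it generalizes cleanly to all $N\ge 4$.

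The main obstacle I expect is proving that the proposed intersection $I_{C_N} \cap (\cap_i P_i)$ actually equals $I_{\gl(C_N)}$ rather than merely containing it --- equivalently, that the list of primary components is complete and that no embedded or extra minimal component is missing. The containment $I_{\gl(C_N)} \subseteq I_{C_N}\cap(\cap_i P_i)$ is routine (each generator of $I_{\gl(C_N)}$, being a $2\times 2$ minor of a conditional slice matrix, vanishes on each component by construction). The reverse containment is the hard part: it requires showing that any binomial lying in all the components can be reduced to zero modulo the quadratic moves, which is exactly the connectivity statement one is after. I would handle this by the inductive localization argument sketched above --- peel off one vertex, use that a fiber restricted to the complement of a vanishing slice is a fiber for $C_{N-1}$ (or for a path, which is decomposable and handled by Lemmas~\ref{lem:TFP} and~\ref{lem:reducible-posmarg}), and bookkeep carefully how the cyclic moves decompose once a slice is known to vanish. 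Verifying that the bookkeeping closes up, and that the collapsed models arising in the induction are again of the expected toric-plus-monomial form, is where the real work lies.
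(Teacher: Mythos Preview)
Your proposal has two concrete gaps, both stemming from an incorrect guess at the shape of the non-toric minimal primes.

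First, the minimal primes of $I_{\gl(C_N)}$ other than $I_{C_N}$ are \emph{not} of the form ``toric ideal of a collapsed model plus a monomial ideal forcing a vertex-slice to vanish.'' In the paper they turn out to be pure monomial primes, one for each quartic Markov move
\[
f=\begin{bmatrix}
A & B & C \\
A & \ol B & \ol C \\
\ol A & B & \ol C \\
\ol A & \ol B & C
\end{bmatrix}
-
\begin{bmatrix}
A & B & \ol C \\
A & \ol B & C \\
\ol A & B & C \\
\ol A & \ol B & \ol C
\end{bmatrix},
\]
namely $P_f=\langle p_i:p_i\nmid f^+f^-\rangle$. The reverse inclusion $I_{C_N}\cap\bigcap_f P_f\subseteq I_{\gl(C_N)}$ is not obtained by an induction on $N$ via localization, but by the key lemma that $p_i f\in I_{\gl(C_N)}$ whenever $p_i\nmid f^+f^-$; one then walks along a Markov-basis path and observes that every quartic step is multiplied by such an extra variable, hence already lies in the quadratic ideal.

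Second, and this is where your argument actually fails rather than merely differs: your positive-margins check is wrong. You claim that each $P_i$ kills an entire slice over some vertex value, so that the corresponding single-vertex marginal of $u_{P_i}$ has a zero. But for $P_f$ the variables \emph{not} in the prime are exactly the eight indices appearing in $f^+f^-$, and in every column both values $1$ and $2$ occur among those eight rows. Hence every single-vertex marginal of $u_{P_f}$ is strictly positive, and your argument gives nothing. The correct obstruction is an \emph{edge} marginal: since $N\ge 4$, at least one of the blocks $A,B,C$ has length $\ge 2$, so two adjacent columns $i,i+1$ lie in the same block and are identical across all eight rows (either $(A_i,A_{i+1})$ or $(\ol A_i,\ol A_{i+1})$). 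Only two of the four possible pairs occur, so the $\{i,i+1\}$-marginal of $u_{P_f}$ is not strictly positive, and Lemma~\ref{lem:graphical-positive-margins} applies. Without identifying the primes correctly you cannot see this, and the vertex-slice heuristic you propose is simply false here.
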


On five vertices there are five irreducible graphs: The complete graph
(which trivially has the positive margins property), the five-cycle~$C_5$
(covered by Theorem~\ref{thm:Cn-pos-marg}), the complete
bipartite graph $K_{2,3}$, the square pyramid, and
the graph $G_{48}$ (see Fig.~\ref{fig:G48}; the name $G_{48}$ comes
from~\cite{ReadWilson98:Atlas_of_Graphs}).  The complete bipartite
graph $K_{2,3}$ is treated in the following theorem, proved in the end
of Section~\ref{sec:K2n}.
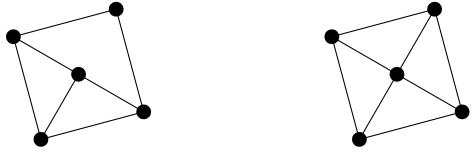
\begin{figure}
\centering
\begin{tikzpicture}[scale=1, rotate=-30]
  \path (1,1) coordinate (X1);
  \path (2,1) coordinate (X2);
  \path (3,1) coordinate (X3);
  \path (2,0) coordinate (X4);
  \path (2,2) coordinate (X5);
  \foreach \i in {1,...,5} { \fill (X\i) circle (2.8pt); }
  \draw (X1) -- (X5) -- (X3) -- (X4) -- (X2) -- (X1) -- (X4);
  \draw (X2) -- (X3);
\end{tikzpicture}
\hspace{2cm}
\begin{tikzpicture}[scale=1, rotate=-30]
  \path (1,1) coordinate (X1);
  \path (2,1) coordinate (X2);
  \path (3,1) coordinate (X3);
  \path (2,0) coordinate (X4);
  \path (2,2) coordinate (X5);
  \foreach \i in {1,...,5} { \fill (X\i) circle (2.8pt); }
  \draw (X1) -- (X5) -- (X3) -- (X4) -- (X2) -- (X1) -- (X4);
  \draw (X3) -- (X2) -- (X5);
\end{tikzpicture}
\caption{$G_{48}$ and the square pyramid}
\label{fig:G48}
\end{figure}

\begin{thm}
\label{thm:K22-pos-marg-int-point}
For $N\ge 4$, the complete bipartite graph $K_{2,N-2}$, where 
$d_{v} = 2$ for the first group of $2$ nodes, has the interior point property.  It
has the positive margins property if and only if $N=4$.
Its global Markov ideal is radical for all $N\ge 4$.
\end{thm}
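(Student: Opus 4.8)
The strategy is to read off all three assertions from the primary decomposition of $I_{\gl(K_{2,N-2})}$ computed in Section~\ref{sec:K2n}, using Lemmas~\ref{lem:positive-margins} and~\ref{lem:graphical-positive-margins}. Throughout, write $V=\{1,2\}\cup\{3,\dots,N\}$ with $d_1=d_2=2$; the cliques of $K_{2,N-2}$ are the edges $\{1,j\}$ and $\{2,j\}$ for $j\in\{3,\dots,N\}$, so the $G$-margin of a table $u$ consists of the numbers $m^{\{v,j\}}_{a,c}(u)=\sum_{x:\,x_v=a,\,x_j=c}u_x$, one for each such edge $\{v,j\}$ and each $(a,c)\in[d_v]\times[d_j]$. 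The decomposition expresses $I_{\gl(K_{2,N-2})}$ as an intersection $I_{K_{2,N-2}}\cap\bigcap_i P_i$ of primes with $I_{K_{2,N-2}}\not\subseteq P_i$, which is precisely the statement that the ideal is radical; this settles the third claim for all $N\ge4$ at once. Since the ideal is radical, each non-toric minimal prime $P_i$ is cellular: $m_{P_i}=\hat m_{P_i}=\prod_{x\in\sigma_i}p_x$ for a set $\sigma_i\subseteq\cX$ of surviving cells, and $u_{P_i}=\mathbf{1}_{\sigma_i}$. By Lemma~\ref{lem:graphical-positive-margins}, $(K_{2,N-2},d)$ then has the positive margins property if and only if every $\sigma_i$ fails to meet some edge-fibre $\{x:\,x_v=a,\,x_j=c\}$.

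For $N=4$ the graph is $K_{2,2}=C_4$. Here the minimal primes in the decomposition all have surviving sets omitting a pair on one of the four edges — in the all-binary case this is already Theorem~\ref{thm:Cn-pos-marg} for $C_4$ — so the positive margins property holds. The interior point property then follows for free: if the $G$-margin $A_Gu$ of a fibre lies in the relative interior of the marginal cone $\RR_{\ge0}A_G$, then $(A_Gu)_i>0$ for every $i$ (each coordinate functional is a valid linear inequality on the cone that does not vanish identically on it), so the fibre has strictly positive $G$-margins and is therefore connected.

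Now let $N\ge5$. The decomposition contains a non-toric minimal prime whose surviving set $\sigma$ meets every edge-fibre: one can take $\sigma=\{x:\,x_1=x_2=g(x_3,\dots,x_N)\}$ for a map $g\colon\cX_{\{3,\dots,N\}}\to[2]$, and for $N\ge5$ the large part is big enough to choose $g$ so that, for every $j$, $a$ and $c$, there is $y$ with $y_j=c$ and $g(y)=a$. Then all the numbers $m^{\{v,j\}}_{a,c}(u_{P_i})$ are positive, so by Lemma~\ref{lem:graphical-positive-margins} the positive margins property fails — which, by the theorem, cannot happen for $N=4$, since there the remaining global Markov statements force $g$ to depend on one coordinate only and then $\sigma$ misses pairs on the edges into the other node. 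It remains to prove the interior point property. Following the remark after Definition~\ref{def:pos-margins}, we augment $A_G$ by the facet functionals of $\RR_{\ge0}A_G$ and apply Lemma~\ref{lem:positive-margins} (in its sharp form, valid here because the ideal is radical): it suffices to show that $A_Gu_{P_i}$ lies on the relative boundary of $\RR_{\ge0}A_G$ for every non-toric minimal prime $P_i$. If $\sigma_i$ misses an edge-fibre, then a coordinate of $A_Gu_{P_i}$ vanishes and we are done. For the primes with edge-covering surviving set — those of the above type and their analogues attached to other pairs of nodes — the key point is that on $\sigma_i$ the values of two nodes (say $X_1$ and $X_2$) are tied to each other; consequently the $\{1,j\}$- and $\{2,j\}$-marginals of $u_{P_i}$ agree, so $A_Gu_{P_i}$ satisfies the linear relation $m^{\{1,j\}}_{a,c}=m^{\{2,j\}}_{a,c}$ for all $a,c,j$. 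This relation fails on the marginal cone (a distribution need not have $X_1$ and $X_2$ equidistributed jointly with each node $X_j$), so $A_Gu_{P_i}$ is not in the relative interior.

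The real work is the primary decomposition of $I_{\gl(K_{2,N-2})}$ itself, for general $N$ and general node sizes $d_3,\dots,d_N$: one must list all minimal primes — most naturally by induction on $N$, peeling off one node of the large part at a time — and prove that their intersection with the toric ideal equals $I_{\gl(K_{2,N-2})}$, i.e.\ that the ideal is radical. Once this is in hand, what is specific to Theorem~\ref{thm:K22-pos-marg-int-point} is the combinatorial bookkeeping sketched above: deciding which surviving sets meet every edge-fibre, and recognizing, for those that do, the linear relation among $G$-marginals that confines their marginal point to a proper face of the marginal cone. I expect the last point — exhibiting and verifying these boundary-certifying relations for all the relevant primes when $N\ge5$ — to be the most delicate part of the argument proper.
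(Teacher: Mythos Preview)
Your overall strategy---read everything off the prime decomposition of $I_{\gl(K_{2,N-2})}$ via Lemma~\ref{lem:positive-margins}---matches the paper's, and your handling of radicality and of the case $N=4$ is essentially right (though for $N=4$ you should not lean on Theorem~\ref{thm:Cn-pos-marg}, which is binary, since here $d_3,d_4$ are arbitrary; the paper argues directly from the primes $P_{a,C,a,D}$).  The genuine gap is for $N\ge 5$: your description of the non-toric minimal primes is wrong, and both of your remaining arguments rest on it.

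The minimal primes (Proposition~\ref{prop:minimal-primes-K2N}) are the ideals $P_{a,C,b,D}$, whose surviving variables are $p_{11K}$ with $K_a\notin C$, $p_{12K}$ with $K_b\notin D$, $p_{21K}$ with $K_b\in D$, and $p_{22K}$ with $K_a\in C$.  In particular the surviving set always contains points with $x_1\neq x_2$, so it is \emph{never} of the form $\{x:x_1=x_2=g(x_3,\dots,x_N)\}$ you posit.  Consequently your claimed boundary relation $m^{\{1,j\}}_{a,c}(u_{P_i})=m^{\{2,j\}}_{a,c}(u_{P_i})$ does not hold for the actual primes (a short count shows it would force $|D|=d_b/2$), and even if it did, a linear \emph{equation} that fails somewhere on the cone does not by itself put a point on the boundary---think of $y_1=y_2$ inside $\RR^2_{\ge0}$.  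The paper instead exhibits a valid supporting inequality of the marginal cone,
\[
\sum_{k\in C} y_{1k}^{1a} + \sum_{k\notin C}y_{2k}^{2a} + \sum_{l\in D} y_{1l}^{2b} - \sum_{l\in D} y_{1l}^{1b} \ \ge\ 0,
\]
proves it holds on every column of $A_G$ (Lemma~\ref{lem:K2n-cycle-inequalities}), and checks that $u_{P_{a,C,b,D}}$ attains equality when $a\neq b$ (Theorem~\ref{thm:K2N-final-answer}).  For the failure of positive margins when $N\ge5$, the paper simply takes any $P_{3,C,4,D}$ and verifies directly that all edge-marginals of $u_P$ are strictly positive (Theorem~\ref{thm:K22-pos-marg}).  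So the missing idea is precisely the correct form of the primes $P_{a,C,b,D}$ and the matching facet inequality above; once you replace your guessed $\sigma$ and your equality relation with these, your outline becomes the paper's proof.
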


We next discuss the pyramid.  To obtain a more general result the
following definition is needed: For any graph $G=(V,E)$ with vertex
set $V=[N]$, the \emph{cone} over $G$ is the graph $\hat G=(\hat
V,\hat E)$ with $\hat V=V\cup\{0\}$ and $\hat E=E\cup \{(0,i) :
i\in[N]\}$.
\begin{lemma}
\label{lem:pyramid-radical}
Assume that $\kk$ is a perfect field.
If $I_{\gl(G)}$ is radical for some $d\in\NN^V$, then $I_{\gl(\hat
G)}$ is radical for all $\hat d\in\NN^{\hat V}$ with $\hat d_v=d_v$
for all $v\in V$.
\end{lemma}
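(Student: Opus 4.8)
The plan is to reduce the radicality of $I_{\gl(\hat G)}$ to that of $I_{\gl(G)}$ via a fibered/toric-product argument across the values $x_0$ of the new apex variable $X_0$. The key structural fact is that in $\hat G$ the apex $0$ is adjacent to every other vertex, so for any separating partition $\hat V = A\cup B\cup C$ of $\hat G$ the vertex $0$ must lie in $C$ (otherwise there is an edge from $0$ to the other side, contradicting separation unless $A$ or $B$ is empty). Consequently the global Markov statements of $\hat G$ are, roughly, the statements of $G$ ``conditioned on $X_0 = x_0$'' for each value $x_0\in[\hat d_0]$: the matrices $P^{A,B,x_C}$ appearing for $\hat G$ are exactly those appearing for $G$ with $C$ enlarged by the coordinate $0$, so their $2\times 2$ minors only ever compare entries sharing the same value of $x_0$. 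First I would make this precise by checking that the separators of $\hat G$ with $A\cup B\cup C=\hat V$ are exactly the sets $C'=C\cup\{0\}$ with $C$ a separator of $G$ and $A\cup B\cup C=V$; this uses the definition of the cone and the separation condition. It follows that $I_{\gl(\hat G)}$ decomposes along the ``slices'' $x_0 = 1,\dots,\hat d_0$.

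Next I would set up the decomposition of the polynomial ring. Write $R = \kk[p_{x}: x\in\cX_{\hat V}]$ and group the variables by the value of $x_0$, so $R = \kk[p^{(1)}, \dots, p^{(\hat d_0)}]$ where $p^{(k)}$ is the block of variables with $x_0 = k$; each block is a copy of the ring $\kk[p_y : y\in\cX_V]$ on which $I_{\gl(G)}$ lives. By the observation above, $I_{\gl(\hat G)} = \sum_{k=1}^{\hat d_0} I_{\gl(G)}^{(k)}$, where $I_{\gl(G)}^{(k)}$ is $I_{\gl(G)}$ rewritten in the variables $p^{(k)}$. Now I would invoke the toric fiber product machinery of \cite{sullivant07:_toric} and \cite{TFP-II} exactly as in the proof of Lemma~\ref{lem:TFP}: $I_{\gl(\hat G)}$ is (up to the standard identification) a toric fiber product of the $\hat d_0$ copies $I_{\gl(G)}^{(k)}$ along the toric ideal corresponding to the empty-marginal structure. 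Since the $G$-marginal matrix $A_G$ is non-negative and the relevant toric ideal is that of a very affine (in fact essentially trivial) map, the hypotheses of the toric-fiber-product gluing results are met. The perfectness of $\kk$ enters precisely here, to guarantee that radicality is preserved under the toric fiber product — radicality can fail to be stable under tensor products over non-perfect fields, and this is the place in \cite{TFP-II} where that hypothesis is used.

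I would then conclude: since $I_{\gl(G)}$ is radical by hypothesis (for the given $d$, hence for the matching entries of $\hat d$), and a toric fiber product of radical ideals over a perfect field is radical (by \cite[Corollary~7.13]{TFP-II} or its proof, applied iteratively over the $\hat d_0$ slices), the ideal $I_{\gl(\hat G)}$ is radical. The main obstacle I expect is not the ring-theoretic gluing — that is black-boxed by \cite{TFP-II} — but rather verifying cleanly that $I_{\gl(\hat G)}$ genuinely has the slice-wise sum form $\sum_k I_{\gl(G)}^{(k)}$ and fits the \emph{exact} hypotheses of the toric fiber product theorem (the right grading, the compatibility of the marginal matrices $A_{\hat G}$ and $A_G$, and the fact that no ``mixed'' statements conditioned on a subset not containing $0$ sneak in). For the last point one uses that such mixed statements, by \cite[Lemma~7.10]{TFP-II} cited in the excerpt, are already implied by $\gl(\hat G)$ and so do not enlarge the ideal. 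Once the structural identification is in place, the radicality conclusion is immediate from the cited results.
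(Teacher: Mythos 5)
Your structural observations are right: the apex $0$ lies in every separator of $\hat G$, so $\gl(\hat G)$ consists exactly of the statements $\ind{X_A}{X_B}[X_C\cup X_0]$ with $\ind{X_A}{X_B}[X_C]\in\gl(G)$, and hence $I_{\gl(\hat G)}=\sum_{k=1}^{\hat d_0} I_{\gl(G)}^{(k)}$ where the $I^{(k)}$ live in pairwise disjoint blocks of variables. This is also the first half of the paper's proof.

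The gap is in what you do with this decomposition. A sum of ideals in disjoint sets of variables is \emph{not} a toric fiber product over a trivial (or any) common multigrading: the toric fiber product $I\times_{\cA} J$ lives in a fresh polynomial ring on products $z_{ij}\mapsto x_iy_j$ and, when the base grading is trivial, produces a Segre-type ideal, not $I+J$ in $\kk[x,y]$. Concretely, the cone over $C_4$ is the square pyramid, which the paper records as an \emph{irreducible} graph on five vertices, so it is not obtainable by gluing smaller graphs along a clique; there is no toric fiber product presentation to invoke, and \cite[Corollary~7.13]{TFP-II} simply does not apply. You correctly flag your own worry (``fits the exact hypotheses of the toric fiber product theorem''), but that worry is fatal rather than a detail to be checked.

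What the decomposition $I_{\gl(\hat G)}=\sum_k I^{(k)}$ does give you is that the quotient ring is a tensor product over $\kk$ of the rings $\kk[p^{(k)}]/I^{(k)}$, each of which is reduced by hypothesis (radicality of $I_{\gl(G)}$). The correct black box is therefore not \cite{TFP-II} but the commutative-algebra fact that a tensor product of reduced $\kk$-algebras is reduced when $\kk$ is perfect (Bourbaki). This is exactly where perfectness of $\kk$ is used, and it finishes the proof in one line once the slice decomposition is established.
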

\begin{proof}
Let $\hat\cX=[d_0]\times\cX$.  For any polynomial
$f\in\kk[p_x:x\in\cX]$ denote by $\hat f_i$ the polynomial in
$\kk[p_y:y\in\hat\cX]$ where each variable $p_x$, $x\in\cX$, has been
replaced by $p_{ix}$.  Let $I_i$ be the ideal generated by the
polynomials $\hat f_i$ for all $f\in I_{\gl(G)}$.
The equality
\begin{equation*}
\gl(\hat G) = \left\{
\ind{X_{A}}{X_{B}}[X_{C}\cup X_0] : \ind{X_{A}}{X_{B}}[X_{C}]\in\gl(G)
\right\}
\end{equation*}
implies $I_{\gl(\hat G)}=I_1+\dots+I_{d_0}$.  The ideals
$I_1,\dots,I_{d_0}$ are radical, since $I_{\gl (G)}$ is radical, and
$I_1,\dots,I_{d_0}$ are generated by polynomials in disjoint sets of
variables.  To show that their sum is also radical it suffices to show
that the tensor product of reducible rings is again reducible.  This
is true if the field $\kk$ is perfect
by~\cite[Chapter~5,~§15]{Bourbaki50:Algebre_4-7}.
\end{proof}
\begin{lemma}
\label{lem:pyramid-lemma}
If $(G,d)$ has the positive margins property,
then $(\hat G,\hat d)$ also has the positive margins property, where
$\hat d_v=d_v$ for $v\in [N]$ and $\hat d_0$ is arbitrary.
\end{lemma}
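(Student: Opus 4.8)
The plan is to use the fact that coning $G$ off to $\hat G$ replaces every global Markov statement of $G$ by the same statement with $X_0$ adjoined to the conditioning set, which decomposes the move set $\cB_{\gl(\hat G)}$ into $\hat d_0$ disjoint copies of $\cB_{\gl(G)}$, one in each slice of the apex variable $X_0$. Write $\hat\cX=[\hat d_0]\times\cX$, and for $\hat u\in\NN^{\hat\cX}$ write $u^{(i)}\in\NN^\cX$, $u^{(i)}_x:=\hat u_{ix}$, for its $i$-th slice; let $\iota_i\colon\NN^\cX\into\NN^{\hat\cX}$ be the embedding onto the slice $X_0=i$. I would first record two purely combinatorial facts. (1)~Using the identity $\gl(\hat G)=\{\ind{X_A}{X_B}[X_C\cup X_0]:\ind{X_A}{X_B}[X_C]\in\gl(G)\}$ from the proof of Lemma~\ref{lem:pyramid-radical}, the quadratic moves of $\ind{X_A}{X_B}[X_C\cup X_0]$, grouped according to the value $i$ of $X_0$, are precisely the images under $\iota_i$ of the quadratic moves of $\ind{X_A}{X_B}[X_C]$; hence $\cB_{\gl(\hat G)}=\bigsqcup_{i=1}^{\hat d_0}\iota_i(\cB_{\gl(G)})$ and so $\ZZ\cB_{\gl(\hat G)}=\bigoplus_{i=1}^{\hat d_0}\iota_i(\ZZ\cB_{\gl(G)})$. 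In particular $\hat u$ and $\hat v$ lie in the same $\hat G$-fiber if and only if $v^{(i)}-u^{(i)}\in\ZZ\cB_{\gl(G)}$ for every $i$. (2)~Every clique of $\hat G$ is either a clique of $G$ or of the form $C\cup\{0\}$ with $C$ a clique of $G$, and for such a clique the $\hat G$-marginal of $\hat u$ on $C\cup\{0\}$ at a cell $(i,x_C)$ equals the $G$-marginal of $u^{(i)}$ on $C$ at $x_C$. Consequently $\hat u$ has strictly positive $\hat G$-margins if and only if every slice $u^{(i)}$ has strictly positive $G$-margins.

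Granting these, the argument is short. Let $\hat u\in\NN^{\hat\cX}$ have strictly positive $\hat G$-margins and let $\hat v$ lie in its $\hat G$-fiber. By~(1) and~(2), for each $i$ the tables $u^{(i)}$ and $v^{(i)}$ lie in the same $G$-fiber (that is, $v^{(i)}-u^{(i)}\in\ZZ\cB_{\gl(G)}$) and $u^{(i)}$—hence also $v^{(i)}$—has strictly positive $G$-margins. The positive margins property of $(G,d)$ therefore yields a path from $u^{(i)}$ to $v^{(i)}$ inside $\NN^\cX$ using moves from $\cB_{\gl(G)}$; applying $\iota_i$ turns it into a path inside $\NN^{\hat\cX}$, using moves from $\cB_{\gl(\hat G)}$ by~(1), that changes only the slice $X_0=i$ and leaves all other slices nonnegative. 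Concatenating these paths for $i=1,\dots,\hat d_0$ connects $\hat u$ to $\hat v$ through nonnegative tables, so $(\hat G,\hat d)$ has the positive margins property.

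The only steps requiring genuine care are facts~(1) and~(2) of the first paragraph, and both are elementary: (1) is just the slice-wise reading of the description of $\gl(\hat G)$ that was already used to establish radicality in Lemma~\ref{lem:pyramid-radical}, and (2) is the standard description of the cliques and marginals of a cone. Unlike the reducible case treated in Lemma~\ref{lem:reducible-posmarg}, no Markov basis is needed here, since the slices never interact.
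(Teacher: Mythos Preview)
Your proof is correct and follows essentially the same approach as the paper's: decompose a $\hat G$-table into its $\hat d_0$ slices along the apex variable, observe that positive $\hat G$-margins and same $\hat G$-fiber reduce slice-wise to positive $G$-margins and same $G$-fiber, then connect each slice separately and concatenate. The paper's proof is a terse three-sentence version of exactly this argument; you have simply spelled out the two combinatorial facts (the slice decomposition of $\cB_{\gl(\hat G)}$ and the clique/marginal description of the cone) that the paper leaves implicit.
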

\begin{proof}
Any contingency table $\hat u$ for $\hat G$ can be seen as a family
$(u^{(i)})_{i\in[\hat d_0]}$ of contingency tables for~$G$.  If $\hat
u$ has positive $\hat G$-margins, then each $u^{(i)}$ has positive
$G$-margins.  Now $\hat u$ and $\hat v$ have the same $\hat G$ margins
if and only if $u^{(i)}$ and $v^{(i)}$ have the same $G$-margins for
all~$i$.  Hence, if $\hat u$ and $\hat v$ have the same positive $\hat
G$-margins, then $u^{(i)}$ and $v^{(i)}$ are connected by quadratic
moves for all $i$, and the same moves can be used to connect $\hat u$
and~$\hat{v}$.
\end{proof}

It remains to discuss $G_{48}$.  It is easy to see that the binary
model for this graph is equal to the model of $K_{2,2}$ with
$d=(2,2,2,4)$, and therefore covered by
Theorem~\ref{thm:K22-pos-marg-int-point}---$G_{48}$ has the positive
margins property, and its global Markov ideal is radical.

Next, we give an example of a global Markov ideal that is not radical.
\newcommand{\Itt}{I_{\gl(K_{3,3})}}
\begin{example}
\label{ex:K33}
Consider the graph $K_{3,3}$, and let $d_v=2$ for all vertices $v\in
K_{3,3}$.  The global Markov ideal $I_{\gl(K_{3,3})}$ is contained in
a polynomial ring with 64 indeterminates.  It is generated
by $144=6\cdot 24$ quadrics corresponding to the six CI statements
$\ind{X_i}{X_{jk}}[X_{456}]$ and $\ind{X_i}{X_{jk}}[X_{123}]$, where
$\{i\}\cup \{jk\}$ runs through the non-trivial bipartitions of
$\{1,2,3\}$ and $\{4,5,6\}$, respectively.

The global Markov ideal $\Itt$ is complicated 
enough that Buchberger's algorithm for
Gr\"obner basis computation does not terminate within reasonable time.
On the other hand, a Gr\"obner basis of the graphical model
$I_{K_{3,3}}$ can be computed using 4ti2~\cite{4ti2}.  This is
another instance of the fact that toric ideals are less complex than
arbitrary binomial ideals~\cite{sturmfels91:_gr_bases_toric_variet}.

In view of these complications, the classical tools of computer
algebra do not work for this example, as they depend on Gr\"obner
bases.  However, we can use Proposition~\ref{prop:DES-criterion}:
containment of a binomial in a pure difference ideal can be checked by
analyzing the connected components of a graph.
We implemented this idea in a \verb!C++!-library that 
can test whether two exponent vectors lie in the same connected
component by enumerating their connected components via breadth-first
search.
The \verb!C++!-source code of our library is available on the Internet
under the GPL licence~\cite{graphbinomials}.  The directory
\verb!examples! contains code for $K_{3,3}$ and other graphs which
allows to generate the connected components and construct a path in
the case of connectivity.

To disprove radicality
it suffices to find a binomial $p^u-p^v\in I_{K_{3,3}}\setminus\Itt$
(for example, a degree four Markov move) and
a square-free monomial $p^w$ such that
$p^w(p^u-p^v)\notin \Itt$ while $p^{2w}(p^u-p^v)\in\Itt$.
Checking the degree four Markov moves $p^u-p^v$ and monomials of degree two, our program found the following witness: Let
\begin{equation*}
p^u-p^v :=
p_{121|222}p_{212|212}p_{122|112}p_{222|122}
- p_{221|222}p_{112|212}p_{222|112}p_{122|122},
\end{equation*}
and let $p^w:= p_{111|111}p_{221|111}$ (the vertical bar $|$ separates
the states of the two groups of nodes in $K_{3,3}$).  Then $p^{u}p^w$
and $p^{v}p^w$ are not connected by $\gl(K_{3,3})$, but $p^{u}p^{2w}$
and $p^{v}p^{2w}$ are connected.
The connected components of $p^{u+w}$ and $p^{v+w}$ consist of 18
monomials each, while that of $p^{u}p^{2w}$ and $p^{v}p^{2w}$ consists
of 90 monomials.  \qed
\end{example}

We now construct examples of graphical models that do not have the
interior points property (and, hence, cannot have any positive margins
property).  Remember that a graph $G$ is \emph{triangle-free} if it
does not contain a cycle of length three, and a graph is
\emph{two-connected} if it remains connected when a single node is
eliminated.
\begin{thm}
\label{thm:badexample}
Let $G$ be a two-connected triangle-free graph with $N$ vertices, and
let $p\geq N-1$ be a prime power.  If $d_{a} = p$ for all $a \in [N]$,
then $(G,d)$ does not have the interior point property.
\end{thm}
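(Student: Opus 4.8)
The plan is to exhibit a concrete pair of contingency tables $u$ and $v$ with identical, strictly interior $G$-marginals that are not connected by the quadratic moves $\cB_{\gl(G)}$. The construction should exploit the combinatorial content of Theorem~\ref{thm:orth-lat-squares} (the orthogonal Latin squares example alluded to in the introduction as Theorem~\ref{thm:orth-lat-squares}), so the first step is to translate the two-connected triangle-free hypothesis into a statement about the global Markov statements $\gl(G)$: since $G$ is triangle-free, every clique has at most two vertices, so the cliques are exactly the edges and the parametrization $\phi_G$ uses only edge parameters; and since $G$ is two-connected, for every edge $\{a,b\}$ the set $C = V \setminus \{a,b\}$ separates $a$ from $b$ only in a controlled way. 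I would spell out precisely which CI statements $\ind{X_a}{X_b}[X_{V\setminus\{a,b\}}]$ (or coarser ones) lie in $\gl(G)$ and record that the quadratic moves $\cB_{\gl(G)}$ are exactly the degree-two binomials swapping two states along a single edge while holding all other coordinates fixed.

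Next I would build the tables. The idea is to place the support of $u$ (and of $v$) on a set of cells indexed by a system of ``generalized Latin squares'' or, more precisely, by the solution set of a system of linear equations over the field $\mathbb{F}_p$: index cells by $x \in [p]^N = \cX$ and take $u$ supported on the affine subspace cut out by the cycle-space equations of $G$ read modulo $p$. Because $G$ is two-connected, its cycle space has dimension $|E| - N + 1 \geq 1$ (using two-connectedness to guarantee a cycle, hence $|E| \ge N$), and the hypothesis $p \ge N-1$ guarantees enough room that the relevant incidence equations over $\mathbb{F}_p$ have the ``orthogonality'' property needed: along any single edge, each partial assignment of the other coordinates extends to exactly one value, so every edge-marginal of $u$ is the uniform distribution, hence strictly positive and interior in the marginal cone. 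I would define $v$ by a global symmetry (e.g.\ shifting one coordinate by a fixed nonzero element of $\mathbb{F}_p$, or applying a nontrivial element of an appropriate automorphism group of the solution set) so that $v$ has exactly the same edge-marginals as $u$.

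The heart of the argument is then to show $p^u - p^v \notin I_{\gl(G)}$, equivalently by Proposition~\ref{prop:DES-criterion} that $u$ and $v$ are not connected by $\cB_{\gl(G)}$. For this I would set up an invariant preserved by every quadratic move: each generator of $\cB_{\gl(G)}$ alters the configuration along a single edge, and I would argue that it preserves (say) the $\mathbb{F}_p$-valued functional obtained by summing some carefully chosen multilinear character over the support, or preserves the fact that the support is a union of cosets of a fixed subgroup. Since $u$ is supported on one such coset-structured set and $v$ on a genuinely different one (the shift is nontrivial mod $p$, and $p \ge N-1$ forces the shift to be undetectable by any marginal but detectable by the global invariant), no sequence of quadratic moves can take $u$ to $v$, while the fiber clearly is nonempty and has interior $G$-marginals. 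The main obstacle I anticipate is making the invariant genuinely move-invariant: a single quadratic move can leave the ``coset'' structure and re-enter it, so I will need the prime-power hypothesis and the triangle-free, two-connected structure to rule out that any finite composition of such moves escapes and returns in a way that reaches $v$ — most likely by working not with the support set directly but with an explicit group-valued or $\mathbb{F}_p$-linear invariant that is checked to be unchanged by each individual edge-swap generator, and then observing it separates $u$ from $v$.
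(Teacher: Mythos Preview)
Your proposal has a genuine gap in the central step, and it overcomplicates what is in the paper a two-line deduction.

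First, you misidentify the quadratic moves. The generators of $I_{\gl(G)}$ do \emph{not} ``swap two states along a single edge while holding all other coordinates fixed.'' A CI statement $\ind{X_A}{X_B}[X_C]$ in $\gl(G)$ requires $C$ to separate $A$ from $B$; in particular, if $\{a,b\}$ is an edge of a two-connected graph, then $V\setminus\{a,b\}$ does \emph{not} separate $a$ from $b$, so there is no such statement. The quadratic moves instead swap the entire $A$-block of two configurations that agree on~$C$, where $|C|\ge 2$ by two-connectedness.

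Second, and more importantly, the invariant argument you worry about is unnecessary. The paper's proof (via Theorem~\ref{thm:orth-lat-squares}) shows that the constructed table $u$ is \emph{isolated}: no quadratic move applies to it at all. The reason is simply that every separator $C$ contains at least two vertices $i,j$, so applying a quadratic move requires two support points of $u$ that agree in coordinates $i$ and~$j$; but the table is built so that every $2$-way marginal has all entries equal to~$1$, i.e.\ no two support points agree on any pair of coordinates. Your anticipated obstacle --- moves leaving the coset structure and re-entering --- never arises, because there are no moves to apply.

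As for the construction itself: the paper uses $N-2$ mutually orthogonal Latin squares of order $p$ (which exist since $p$ is a prime power and $p-1\ge N-2$) to define the support set $\cL\subset[p]^N$; this set has the property that projection to any two coordinates is a bijection onto $[p]^2$, which gives the uniform $2$-way marginals. Your affine-subspace idea over~$\mathbb{F}_p$ is in the right spirit (and is indeed one classical construction of MOLS), but ``the cycle-space equations of $G$'' is not the right system; you need a $2$-dimensional affine subspace whose projection to every coordinate pair is bijective, and that has nothing to do with the cycle space of~$G$. Once you have such a $u$, take $v$ to be any nontrivial image of $u$ under the action of $\fS_p^N$ permuting levels --- this fixes all marginals but moves $u$, so the fiber has at least two points while $u$ is isolated.
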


Before proving the theorem, we first give an explicit example.
\begin{example}
\label{ex:Seths-invidious-example}
Consider the four-cycle $C_4$ with $d = (3,3,3,3)$, and let
\[u = 
e_{1111} + e_{1222} + e_{1333} + 
e_{2123} + e_{2231} + e_{2312} +
e_{3132} + e_{3213} + e_{3321}.
\]
The marginal vector $A_{C_{4}} u$ of $u$ lies in the interior
of the marginal cone, and many other vectors with the same marginals
can be constructed by applying elements of the symmetry group
$(\ZZ/3\ZZ)^{4}$.  At the same time no quadratic move can be applied
to~$u$.
\qed
\end{example}

The combinatorially inclined reader may have observed two orthogonal
Latin squares of order three in the last two indices of the elements
contributing to $u$. 
Recall that a \emph{Latin square of order $d$} is a
$(d\times d)$-matrix $L$ with entries in $[d]$ such that each number
in $[d]$ appears exactly once in each row and in each column.  Two
Latin squares $L,L'$ are \emph{orthogonal} if $(L_{i,j},L'_{i,j}) =
(L_{k,l},L'_{k,l})$ implies $i=k$ and $j=l$.  For general $d$ the
number of mutually orthogonal Latin squares of order $d$ is not known.
The following is known:
\begin{enumerate}
\item There are at least 2 orthogonal Latin squares of order $d$,
unless $d\in\{1,2,6\}$.
\item There are at most $(d-1)$ orthogonal Latin squares of order~$d$.
\item If $d$ is a power of a prime, then there are precisely $(d-1)$
orthogonal Latin squares of order~$d$.
\end{enumerate}
See~\cite{DenesKeedwell74:Latin_squares_and_applications} for an
introduction Latin squares.  Theorem~\ref{thm:badexample} is a
corollary to these facts and the following theorem.
\begin{thm}
\label{thm:orth-lat-squares}
Let $G$ be a two-connected triangle-free graph with $N$ vertices.
If there exist $N-2$ mutually orthogonal Latin squares of order
$d_0\ge 2$, then $(G,(d_0,d_0,\dots, d_0))$ does not satisfy the
interior point property.
\end{thm}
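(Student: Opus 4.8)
The plan is to construct, for any two-connected triangle-free graph $G$ on $N$ vertices with $d_v = d_0$ for all $v$, an explicit contingency table $u$ whose $G$-marginals lie in the relative interior of the marginal cone but to which no quadratic move of $\cB_{\gl(G)}$ can be applied, so that the fiber of $u$ is disconnected (it contains the isolated point $u$ together with the many translates of $u$ under the symmetry group $(\ZZ/d_0\ZZ)^N$). The table $u$ will be supported on $d_0^2$ cells. Since $G$ is two-connected, it has a cycle through any two vertices; in particular, fix an ordering of $V = [N]$ and a spanning structure so that the first two coordinates and the remaining $N-2$ coordinates can be treated separately. The idea, generalizing Example~\ref{ex:Seths-invidious-example}, is: index the support cells by pairs $(i,j)\in[d_0]\times[d_0]$; put $x_1 = i$ and $x_2 = j$ for the cell indexed by $(i,j)$; and for each of the remaining vertices $v\in\{3,\dots,N\}$ use one of the $N-2$ mutually orthogonal Latin squares $L^{(v)}$ of order $d_0$ to set $x_v = L^{(v)}_{i,j}$. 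Formally, put $u = \sum_{(i,j)\in[d_0]^2} e_{x(i,j)}$ where $x(i,j)_1 = i$, $x(i,j)_2 = j$, and $x(i,j)_v = L^{(v)}_{i,j}$ for $v\ge 3$.

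The two things to verify are the marginal condition and the no-move condition. For the marginals: I claim each clique of $G$ is a single edge (because $G$ is triangle-free, cliques have size at most two) or a single vertex, so a $G$-marginal of $u$ is the two-dimensional array recording, for an edge $(v,w)$, how many support cells have $x_v = a$ and $x_w = b$. For the edge $(1,2)$ this count is exactly $1$ for every pair $(a,b)$ by construction. For an edge $(1,v)$ with $v\ge 3$: the cells with $x_1 = a$ are the $d_0$ cells $(a,j)$, $j\in[d_0]$, and on these $x_v = L^{(v)}_{a,j}$ runs over all of $[d_0]$ exactly once since $L^{(v)}$ is Latin — so again every pair $(a,b)$ occurs exactly once. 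For an edge $(v,w)$ with $v,w\ge 3$: the count of cells $(i,j)$ with $L^{(v)}_{i,j} = a$ and $L^{(w)}_{i,j} = b$ is exactly $1$ because $L^{(v)}$ and $L^{(w)}$ are orthogonal. For an edge $(2,v)$, $v\ge 3$, symmetric to the $(1,v)$ case. Hence every edge-marginal of $u$ is the all-ones array, which is strictly positive, and in particular $A_G u$ lies in the relative interior of the marginal cone. (I should double-check that strict positivity of all clique marginals is what ``relative interior of $\RR_{\ge 0}A_G$'' means here — this is standard for toric marginal cones, and the paper has already used this identification.)

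For the no-move condition: a quadratic move in $\cB_{\gl(G)}$ corresponds to a conditional independence statement $\ind{X_A}{X_B}[X_C]$ with $A\cup B\cup C = V$ and $C$ separating $A$ and $B$, and applying it to $u$ requires two support cells $x, x'$ of $u$ that agree on $C$, differ on both $A$ and $B$, such that the ``swapped'' cells $x_A x_B' x_C$ and $x_A' x_B x_C$ have... actually the move replaces $e_{x_A x_B x_C} + e_{x_A' x_B' x_C}$ by $e_{x_A x_B' x_C} + e_{x_A' x_B x_C}$, so applicability to $u$ just needs two distinct support cells agreeing on $C$ and differing on both $A$ and $B$. I will show no such pair exists: if $x = x(i,j)$ and $x' = x(i',j')$ agree on the first coordinate then $i = i'$, and agreeing on the second forces $j = j'$, so $x = x'$; more generally, since the map $(i,j)\mapsto x(i,j)$ is injective and any two distinct cells differ in the first two coordinates (as $(i,j)\ne(i',j')$), agreement on any set $C$ containing both vertices $1$ and $2$ already forces $x = x'$. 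The subtlety is a separating set $C$ that does \emph{not} contain both of $1, 2$: here I use two-connectedness. If $C$ separates $A$ from $B$ with $1\in A$ and $2\in B$ (WLOG), then removing $C$ disconnects $G$; but $G$ is two-connected, so $|C|\ge 2$, and more to the point I need that \emph{any} separator $C$ with $x, x'$ agreeing on it forces $x = x'$ — equivalently that the coordinates $\{x_v\}_{v\in C}$ of a support cell determine $(i,j)$. Using the Latin/orthogonality structure: knowing $x_v = L^{(v)}_{i,j}$ for two vertices $v,w\ge 3$ with orthogonal squares pins down $(i,j)$; knowing $x_1 = i$ and one $x_v$, $v\ge3$, pins down $j$ (Latin property); so $(i,j)$ is recovered from the $C$-coordinates unless $C$ is contained in a ``bad'' small set, and I will argue two-connectedness plus triangle-freeness rules out a separator that small. \textbf{The main obstacle} I expect is exactly this last point: cleanly arguing that every separating set $C$ arising in a $\gl(G)$ statement is ``large enough / spread out enough'' that the orthogonal-Latin-square coordinates it carries determine the cell — i.e., translating the graph-theoretic hypothesis (two-connected, triangle-free) into the combinatorial statement that the coordinate functions indexed by $C$ are jointly injective on $[d_0]^2$. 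Once that is in hand, $u$ is an isolated point of its fiber while the fiber is large, so $\cB_{\gl(G)}$ fails to connect it, and Theorem~\ref{thm:badexample} follows by plugging in $d_0 = p$ a prime power $\ge N-1$ and invoking fact (3) on the existence of $p-1 \ge N-2$ mutually orthogonal Latin squares of order $p$.
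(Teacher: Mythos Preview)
Your construction and overall strategy are exactly those of the paper. Two points deserve comment.

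First, your interior-point argument has a real gap. You assert that strict positivity of all clique marginals implies $A_G u$ lies in the relative interior of the marginal cone, and flag this as ``standard.'' It is not: the whole reason the paper distinguishes the positive margins property from the interior point property is that the marginal cone typically has facets beyond the coordinate hyperplanes (see the $K_{2,N-2}$ analysis in Section~\ref{sec:K2n}). The paper closes this gap with a one-line symmetry argument: the group $\fS_{d_0}^N$ acts on $\cX$ and hence on the marginal cone, transitively on its extreme rays; since the margins of $u$ are the all-ones arrays, $A_G u$ is fixed by this action and therefore lies in the relative interior. Equivalently, $A_G u$ is a positive multiple of $A_G \mathbf{1}$.

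Second, your ``main obstacle'' is not an obstacle at all---you have already done the work but failed to notice. You observed that any two coordinates of a support cell determine $(i,j)$: coordinates $1,2$ trivially; coordinates $1,v$ or $2,v$ with $v\ge 3$ by the Latin property; coordinates $v,w$ with $v,w\ge 3$ by orthogonality. That is the complete case analysis. Combined with $|C|\ge 2$ (from two-connectedness), it follows immediately that no two distinct support cells agree on any separator~$C$, so no quadratic move applies. The paper phrases this more crisply: since every $2$-way margin is all ones, applying a move with $i,j\in C$ would require some entry of the $(i,j)$-margin to be at least~$2$. Triangle-freeness is used only to ensure all $G$-margins are $2$-way; it plays no further role in the separator argument.
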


\begin{proof}
Let $L^{(1)},\dots,L^{(N-2)}$ be mutually orthogonal Latin squares of
order $d_0$, and let
\begin{equation*}
\cL = \{ (i,j, L^{(1)}_{i,j},\dots, L^{(N-2)}_{i,j}) : i,j\in [d_0] \} 
\subset \cX = [d_0]^N.
\end{equation*}
The set $\cL$ has the property that for every pair $a,b \in [N]$,
$a\neq b$, one has
\begin{equation*}
\{(l_{a},l_{b}) : l \in\cL \} = [d_0]^{2}.
\end{equation*}
Since $G$ is triangle-free, all $G$-margins are $2$-way margins.
The vector $u(\cL)$ defined via
\begin{equation*}
u(\cL)(l) =
\begin{cases}
1, & \text{ if } l\in\cL,\\
0, & \text{ otherwise},
\end{cases}
\end{equation*}
has the following property: All entries in all its $2$-way margins are
ones.
The group $\fS_{d_0}^N$ ($N$-th direct power of the symmetric group
$\fS_{d_0}$ of $[d_0]$) acts on $\cX$ by permuting each factor.  This
action induces an action on the marginal cone that is transitive on
the extreme rays.  Under this action the margins of $u(\cL)$ are
invariant, which implies that $A_G u(\cL)$ lies in the interior of
this cone and, in particular, is not on any facet.

On the other hand, it is not possible to apply any quadratic global
Markov move to the table~$u(\cL)$.  Indeed, since $G$ is two
connected, any quadratic move $v$ corresponds to a statement
$\ind{X_{A}}{X_{B}}[X_{C}]$, where the separator $C$ contains at least
two distinct elements~$i,j$.  Hence, $v$ can only be applied to tables
where some entry in the $(i,j)$-marginal is two.  Therefore, $u(\cL)$
is isolated in its fiber.  On the other hand, the symmetric group
action on tables sends $u(\cL)$ to other points in its fiber, so that
the fiber is disconnected.
\end{proof}

\begin{example}[A binary grapical model without interior point
property]
\label{ex:binary-invidious}
We can use Theorem~\ref{thm:orth-lat-squares} to show that not all
graphs with binary nodes have the interior point property.  First, $C_4$ with
$d=(4,4,4,4)$ does not have the interior point property, since
there exists a pair of orthogonal Latin squares of order four.
We define a graph $G$ by splitting every vertex of $C_4$ into an edge
as in Figure~\ref{fig:splitC4}.
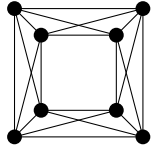
\begin{figure}[htb]
\centering
\begin{tikzpicture}[scale=1, 
]
\path (0,0) coordinate (X11); \path (0,0) ++(225:.5) coordinate (X12);
\path (0,1) coordinate (X21); \path (0,1) ++(135:.5) coordinate (X22);
\path (1,1) coordinate (X31); \path (1,1) ++(45:.5) coordinate (X32);
\path (1,0) coordinate (X41); \path (1,0) ++(-45:.5) coordinate (X42);
\foreach \i in {1,...,4} { \foreach \j in {1,2} { \fill (X\i\j) circle
(2.8pt); } } \foreach \i in {1,...,4} { \draw (X\i1) -- (X\i2); }
\draw (X11) -- (X21) -- (X31) -- (X41) -- cycle; \draw (X12) -- (X22)
-- (X32) -- (X42) -- cycle; \draw (X12) -- (X21) -- (X32) -- (X41) --
cycle; \draw (X11) -- (X22) -- (X31) -- (X42) -- cycle;
\end{tikzpicture}
\caption{Splitting all vertices of $C_4$.}
\label{fig:splitC4}
\end{figure}
It can be seen that the binary model of $G$ is equivalent to that of
$C_4$ with $d=(4,4,4,4)$ in the sense that the ideals $I_{C_4}$ and
$I_{\gl(C_4)}$ are related to the ideals $I_G$ and $I_{\gl(G)}$ via a
renaming of the coordinates.
\qed
\end{example}

All examples where we could prove the interior point property involve
graphs where the toric ideal $I_{G}$ is generated in degree at most
four, and our proofs of the primary decomposition also depend on this
fact.
\begin{question}
If $I_{G}$ is generated in degree at most four, does this imply that
$(G,d)$ has the interior point property?
\end{question}

There are five graphs $G$ on $I_G$ such that $I_G$ (with $d_v=2$ for
all $v$) is not generated in degree four; and in this case $I_G$ is
generated in degree six~\cite{MBDB}.  Among these
graphs, $K_{3,3}$ and~$G_{154}$ are the only triangle-free graphs.  It
is a challenging problem to compute primary decompositions of
$I_{\gl(G)}$ for these two graphs.  By Example~\ref{ex:K33} $\Itt$ is
not radical.  The same method did not allow us to disprove radicality
of~$I_{\gl(G_{154})}$.  Note that $G_{154}$ can be obtained from
$K_{3,3}$ by deleting an edge.
\begin{figure}[ht]
\centering
\begin{tikzpicture}[scale=1.4]
\foreach \i in {1,...,3}
{
  \path (\i,0) coordinate (X\i);
  \path (\i,1) coordinate (Y\i);
  \fill (X\i) circle (2pt);
  \fill (Y\i) circle (2pt);
}
\foreach \i in {1,...,3}
{ \foreach \j in {1,...,3} 
  {
    \draw (X\i) -- (Y\j);
  }
}
\end{tikzpicture}
\hspace{1cm}
\begin{tikzpicture}[scale=1.4]
\foreach \i in {1,...,3}
{
  \path (\i,0) coordinate (X\i);
  \path (\i,1) coordinate (Y\i);
  \fill (X\i) circle (2pt);
  \fill (Y\i) circle (2pt);
}
\draw (X1) -- (X2) -- (X3) -- (Y3) -- (Y2) -- (Y1) -- (X1);
\draw (X1) -- (Y3);
\draw (X3) -- (Y1);
\end{tikzpicture}
\caption{$K_{3,3}$ and $G_{154}$}
\label{fig:sixVertexGraphs}
\end{figure}
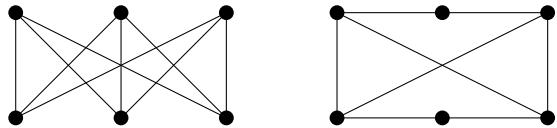

Theorem~\ref{thm:badexample} proves that for any two-connected
triangle-free graph, if the cardinalities $d=(d_v)_{v\in V}$ are
increased to all coincide with the same prime power, then this model does
not have the interior point property.  We conjecture that this
generalizes to many other graphs; i.e.~the situation should not
improve when the numbers $d_v$ are increased.  A similar phenomenon
occurs with Markov bases.  For instance, in the no-three-way
interaction model, the Markov basis becomes arbitrarily complicated as
two of the $d_v$ diverge~\cite{loera06:_markov_bases_of_three_way}.
\begin{conj}
Assume that $(G,d)$ does not have the positive margins (or interior
point) property.  If $d'_v\ge d_v$ for all $v\in V$, then $(G,d')$
does not have the positive margins (or interior point) property
either.
\end{conj}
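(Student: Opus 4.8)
The plan is to reduce from the larger model $(G,d')$ to the smaller one $(G,d)$ by a \emph{state-merging} map. For each vertex $v$ fix a surjection $\pi_v\colon[d'_v]\onto[d_v]$ together with a section $\iota_v\colon[d_v]\into[d'_v]$, and set $\pi=\prod_v\pi_v\colon\cX'\to\cX$ and $\iota=\prod_v\iota_v\colon\cX\into\cX'$, so that $\pi\circ\iota=\mathrm{id}$. Pushing forward tables along $\pi$ gives a map $\pi_*\colon\NN^{\cX'}\to\NN^{\cX}$. The two elementary facts I would establish first are: (a) $\pi_*$ carries every quadratic move in $\cB_{\gl(G)}$ for $(G,d')$ to either $0$ or a quadratic move in $\cB_{\gl(G)}$ for $(G,d)$ --- merging two states inside $A$ or inside $B$ collapses the $2\times 2$ pattern, while merging states inside $C$ merely relabels the slice and preserves the pattern, and the separation condition defining $\gl(G)$ does not depend on the $d_v$; hence if $u',v'\in\NN^{\cX'}$ are connected by $\cB_{\gl(G)}$ then $\pi_*u'$ and $\pi_*v'$ are connected (or equal) by $\cB_{\gl(G)}$ for $(G,d)$. (b) For the ``trivial lift'' $\tilde u:=\sum_{x\in\cX}u_xe_{\iota(x)}$ one has $\pi_*\tilde u=u$, and since $\iota$ is coordinatewise, the $W$-marginal of $\tilde u$ for any clique $W$ is supported on $\prod_{v\in W}\iota_v([d_v])$ and there equals the $W$-marginal of $u$; consequently $A_Gu=A_Gv$ forces $A_{G'}\tilde u=A_{G'}\tilde v$.

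With these in hand I would treat the positive margins case under the assumption (which holds in every example in this paper) that $I_{\gl(G)}$ for $(G,d)$ is radical. Since $(G,d)$ fails the positive margins property there are $u,v$ in a common fiber with $A_Gu=A_Gv$ strictly positive but $p^u-p^v\notin I_{\gl(G)}$; writing $I_{\gl(G)}=(\cap_iP_i)\cap I_G$ as a decomposition into primes, pick $i$ with $p^u-p^v\notin P_i$. As in the proof of Lemma~\ref{lem:graphical-positive-margins}, from $p^u\notin P_i$ (say) and nonnegativity of $A_G$ we get that the $G$-margins of $u_{P_i}$ are strictly positive, i.e.\ $\supp(m_{P_i})$ surjects onto every clique-margin of $(G,d)$. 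Set $S:=\pi^{-1}(\supp m_{P_i})\subseteq\cX'$ and $w:=\mathbf 1_S$. Because $\supp m_{P_i}$ surjects onto every clique-margin of $(G,d)$, its full preimage $S$ surjects onto every clique-margin of $(G,d')$, so $A_{G'}w\ge\mathbf 1$ everywhere; hence $u':=\tilde u+w$ and $v':=\tilde v+w$ have strictly positive $G'$-margins and, by (b), identical $G'$-margins. Finally $\pi_*u'=u+\pi_*w$ and $\pi_*v'=v+\pi_*w$ with $\supp(\pi_*w)\subseteq\supp m_{P_i}$, so $p^{\pi_*u'}-p^{\pi_*v'}=p^{\pi_*w}(p^u-p^v)\notin P_i$ since $P_i$ is prime and $p^{\pi_*w}$ is a product of variables not in $P_i$; thus $\pi_*u',\pi_*v'$ are disconnected in $(G,d)$, and by (a) $u',v'$ are disconnected in $(G,d')$. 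This shows $(G,d')$ fails the positive margins property.

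Two points remain, and I expect them to be the real work. First, when $I_{\gl(G)}$ is not radical the step ``$p^{\pi_*w}$ avoids $P_i$'' must be replaced by ``$\pi_*w$ is supported on variables regular modulo the relevant component $I_i$'', i.e.\ on $\supp\hat m_{I_i}$ rather than $\supp m_{I_i}$; but the first bullet of Lemma~\ref{lem:positive-margins} only produces a component with $A_Gu_{I_i}$ strictly positive, not with $A_G\hat u_{I_i}$ strictly positive, so closing this gap appears to require the mesoprimary decomposition of $I_{\gl(G)}$ together with a finer analysis of which monomials actually witness the failure. Second, and more seriously, for the interior point property one needs $A_{G'}u'$ to lie in the \emph{relative interior} of the larger marginal cone, not merely to be coordinatewise positive; the naive choice $w=\mathbf 1_S$ only guarantees the latter, because the $\pi$-preimage of a set whose margins lie in the relative interior of the small marginal cone need not have margins in the relative interior of the large one. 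One would instead have to choose $w$, still supported on $\pi^{-1}(\supp m_{I_i})$, so that $A_{G'}w$ is a strictly positive combination of precisely the columns of $A_{G'}$ indexed by that support; establishing that such a $w$ always exists is the main obstacle, and it is the reason the statement is stated only as a conjecture. A plausible auxiliary tool here is the transitive action of $\prod_v\fS_{d'_v}$ on the extreme rays of the marginal cone used in the proof of Theorem~\ref{thm:orth-lat-squares}, which one might try to exploit to average $w$ into a suitably symmetric position without destroying the disconnectedness argument.
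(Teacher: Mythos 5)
This statement is left as a conjecture in the paper, with no accompanying proof; there is no paper argument to compare against. Your state-merging reduction via $\pi$ and $\iota$ is a genuinely new idea, and the verification of (a), (b), and the padding argument is correct as far as it goes: if $(G,d)$ fails the positive margins property and $I_{\gl(G)}$ is radical, the tables $u'=\tilde u+w$ and $v'=\tilde v+w$ do lie in a common $(G,d')$-fiber (the trivial lift preserves margins, and $\tilde u-\tilde v$ lifts a lattice element), do have strictly positive $G'$-margins (since $A_{G'}w\ge\mathbf 1$), and are disconnected because $p^{\pi_*u'}-p^{\pi_*v'}=p^{\pi_*w}(p^u-p^v)$ avoids the prime $P_i$, so that (a) propagates disconnectedness upward. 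One small step worth spelling out: since $P_i$ is a binomial prime, a monomial lies in $P_i$ iff one of its variables does, and $p^u-p^v\notin P_i$ forces at least one of $p^u,p^v$ to avoid $P_i$; by symmetry one may take it to be $p^u$, which gives $\supp(u)\subseteq\supp m_{P_i}$ and hence $A_Gu_{P_i}>0$ from $A_Gu>0$.

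Your assessment of the two obstructions is accurate, and both are genuine. In the non-radical case the monomial $\pi_*w$ would need to be supported on regular variables modulo the relevant component $I_i$, i.e.\ on $\supp\hat m_{I_i}$ rather than $\supp m_{I_i}$; the contrapositive of the first bullet of Lemma~\ref{lem:positive-margins} only controls $u_{I_i}$, and $p^u-p^v\notin I_i$ for a non-prime $I_i$ does not even force $\supp(u)\subseteq\supp m_{I_i}$. For the interior point case, coordinatewise positivity of $A_{G'}u'$ is strictly weaker than membership in the relative interior of the large marginal cone, since there are facet inequalities of that cone (such as~\eqref{eq:K2n-facets} for $K_{2,N-2}$) that are not coordinate conditions, and $w=\mathbf 1_S$ does not control them; one would need a $w$ supported on $\pi^{-1}(\supp m_{P_i})$ with $A_{G'}w$ strictly inside every facet of the large cone, and producing such a $w$ requires a comparison of the face lattices of the two marginal cones under $\pi$ that is not obviously available. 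The conjecture therefore remains open, but the positive-margins/radical case of your argument is a correct and nontrivial partial result.
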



\section{\texorpdfstring{Binary $N$-cycle models}{Binary N-cycle models}}
\label{sec:binary-n-cycles}

In this section we study the binary model of the $N$-cycle~$C_N$.  We
find a Markov basis (Theorem~\ref{t:ncycleMB}) and compute a prime
decomposition, showing that $I_{\gl(C_N)}$ is radical
(Theorem~\ref{t:ncycle-primedec}).  We then use this 
decomposition to prove the positive margins property
(Theorem~\ref{thm:Cn-pos-marg}).

We first describe a Markov basis of the toric ideal $I_{C_{N}}$.  A
Markov basis for this model was already presented in
\cite{develinsullivant03}.  Here, we construct a smaller Markov basis,
in order to simplify our proofs of the primary decomposition.
We use tableau notation to denote monomials and binomials in the
polynomial ring $\kk[p_x:x\in\cX]$.  The monomial
$p_{x_{1,1}x_{1,2}\dots x_{1,N}} \cdots p_{x_{t,t}x_{t,2}\dots
x_{t,N}}$ is represented by the following tableau with $t$ rows:
\begin{equation*}
\begin{bmatrix}
x_{1,1} & x_{1,2} & \dots & x_{1,N} \\
\vdots & \vdots & \vdots \\
x_{t,t} & x_{t,2} & \dots & x_{t,N}
\end{bmatrix}.
\end{equation*}
This notation greatly facilitates computations since applying moves to
a monomial merely corresponds to manipulating the entries of tableau
according to rules encoded by the moves.  Tableau calculations are
widely used in algebraic statistics, see for
example~\cite{drton09:_lectur_algeb_statis}.

All tableaux in this section are to be considered up to cyclic
symmetry, that is, a tableau represents also all other tableaux which
arise from making the $i$-th column the $(i+k)$-th column, where $k$ is
arbitrary and indices are considered modulo~$N$.

Denote $\cB_2$ the set of quadrics of the form
\begin{equation}
\label{e:ncycle-quads}
\begin{bmatrix}
a & A & b & B \\
a & A' & b & B' 
\end{bmatrix}-
\begin{bmatrix}
a & A' & b & B \\
a & A & b & B' 
\end{bmatrix},\quad 
\genfrac{}{}{0pt}{}{a,b \in [2], A,A' \in [2]^l, B,B'\in
[2]^{N-l-2},}{A\neq A', B\neq B',\quad 0 < l < N-2.}
\end{equation}
By convention, this means that the $a$- and $b$-columns are at
arbitrary non-adjacent positions in the binomial.  The quadrics in
$\cB_2$ therefore correspond to conditional independence statements of the form
$\ind{\{X_{k+1},\dots,X_{l-1}\}}{\{X_{l+1},\dots,X_{k-1}\}}[\{X_k,X_l\}]$
with $k$ and $l$ non-adjacent.   Note that $\gl(C_N)$ contains
further statements; but their quadrics are contained in the ideal
generated by those in~$\mathcal{B}_2$.

To each binary
state $K\in\cX$ there is a unique opposite state $\ol K\in\cX$,
defined by switching 1 and 2 in each component.
Let $\cB_4$ be the set of quartics of the form
\begin{equation}
\label{e:n-cycle-generic-quartics}
\begin{bmatrix}
A & B & C \\
A & \ol B & \ol C \\
\ol A & B & \ol C \\
\ol A & \ol B & C 
\end{bmatrix}
-
\begin{bmatrix}
A & B & \ol C \\
A & \ol B & C \\
\ol A & B & C \\
\ol A & \ol B & \ol C
\end{bmatrix},\quad 
\genfrac{}{}{0pt}{}{A \in [2]^k, B \in [2]^{l-k}, C\in [2]^{N-l},}{0 < k < l < N.}
\end{equation}

\begin{thm}
\label{t:ncycleMB}
For $N\geq 4$, the set $\cB_2 \cup \cB_4$ is a Markov basis of the
binary graphical model of the $N$-cycle.
\end{thm}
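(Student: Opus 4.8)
The plan is to show that $\cB_2\cup\cB_4$ generates the toric ideal $I_{C_N}$, equivalently (by Theorem~\ref{thm:fundamental-Markov-bases}) that the binomial ideal $I_{\cB_2\cup\cB_4}$ equals $\Itor=I_{C_N}$. Since $\cB_2\cup\cB_4\subseteq\ker_\ZZ A_{C_N}$, one inclusion is automatic, and it suffices to prove that every two tables $u,v\in\NN^\cX$ with the same $C_N$-margins are connected by the moves in $\cB_2\cup\cB_4$. Equivalently, by Proposition~\ref{prop:DES-criterion}, the fibers of the toric map are connected by $\pm(\cB_2\cup\cB_4)$. I would argue this by a combinatorial ``normal form'' reduction: fix a monomial order (the tableau representation makes this convenient) and show that each fiber contains a unique minimal tableau to which every element of the fiber can be reduced using the quadratic and quartic moves.

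First I would set up the reduction using the quadrics in $\cB_2$. The move~\eqref{e:ncycle-quads} corresponds, for a chosen pair of non-adjacent cycle positions $k,l$, to a ``row swap'' on the blocks strictly between them, conditioned on agreement of the $k$- and $l$-entries. Using all such moves one can sort, within each matched pair of values at two fixed non-adjacent coordinates, the remaining coordinates into lexicographic order. The key structural point is that $C_N$ has enough non-adjacent pairs (for $N\ge 4$) so that these quadratic moves generate all of the ``decomposable part'' of the connectivity: a standard argument (as in the treatment of cycle models in~\cite{develinsullivant03} and the glueing technique of~\cite{sullivant07:_toric}) shows that after applying all quadratic moves, a table is determined by a bounded amount of residual data — roughly, the list of ``two-way margins'' together with a correction term that lives only on the even/odd parity classes given by the opposite-state involution $K\mapsto\ol K$. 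This is where the quartics enter.

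Next I would analyze the residual obstruction and show it is exactly killed by $\cB_4$. After the quadratic reduction, the only tables in a fiber that are not yet identified differ by the presence or absence of the two ``generic'' configurations appearing on the two sides of~\eqref{e:n-cycle-generic-quartics}. The quartic move swaps these two configurations for any splitting $0<k<l<N$ of the cycle, and a counting/parity argument (tracking, for each such splitting, the number of rows of each of the four $\pm$-pattern types) shows that once the two-way margins are fixed, the only freedom is this swap, so $\cB_4$ connects what remains. Concretely, I expect to prove: if $u$ has no submonomial equal to either tableau in~\eqref{e:n-cycle-generic-quartics} and is already quadratically reduced, then $u$ is the unique such table in its fiber — a ``Gröbner-basis-free'' uniqueness statement proved by direct induction on the number of rows.

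The main obstacle will be the second step: controlling precisely what survives the quadratic reduction and proving that nothing beyond the $\cB_4$-orbit remains. The quadrics of $\cB_2$ are only a subset of the global Markov quadrics of $C_N$ (the paper notes the others lie in the ideal they generate), so I must be careful that the smaller set still suffices for the decomposable reductions; and the interaction between different non-adjacent pairs $(k,l)$ — i.e.\ confluence of the quadratic rewriting — needs a genuine argument rather than a hand-wave. A clean way to organize this is to first establish the theorem for $C_4$ by a finite (machine-checkable) computation, then induct on $N$ by ``cutting'' the cycle at a non-adjacent pair into two overlapping paths and invoking a toric-fiber-product-style glueing of Markov bases as in~\cite[Theorem~2.9]{sullivant07:_toric}, checking that the glued moves are precisely $\cB_2\cup\cB_4$. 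I expect the induction, once the base case and the glueing bookkeeping are in place, to be routine; the bookkeeping itself is the delicate part.
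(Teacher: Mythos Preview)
Your inductive strategy in the final paragraph is on the right track and is close in spirit to what the paper does, but there is a genuine gap in the tool you invoke. The result you cite, \cite[Theorem~2.9]{sullivant07:_toric}, glues Markov bases along a \emph{separating clique} (a codimension-zero toric fiber product). When you cut $C_N$ at two non-adjacent vertices into two overlapping paths, the shared vertex pair is \emph{not} a clique of $C_N$, so that theorem does not apply; the cycle is irreducible, which is exactly what makes the problem nontrivial. The paper instead realizes $C_N$ as a \emph{codimension-one} toric fiber product of two chains and appeals to the more general machinery of~\cite{TFP-II} (Theorems~5.4 and~5.10), which requires verifying a ``slow-varying'' hypothesis and then, beyond the glued quadrics, demands a Markov basis of the associated codimension-zero product---a product of an $(N-1)$-cycle with the $3$-cycle graph model. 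The induction therefore starts at $N=3$ (the no-three-way interaction quartic), not at~$N=4$.

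The second gap is that even with the correct gluing theorem, the lifting step does not output $\cB_4$ directly: the lifted quartics form a strictly larger family (four independent strings $A,B,C,D$ in the last block rather than the rigid $A,B,\ol A,\ol B$ pattern of~\eqref{e:n-cycle-generic-quartics}). The substance of the paper's proof is an explicit case analysis with tableau calculations showing that every such lifted quartic already lies in the ideal generated by $\cB_2\cup\cB_4$. This is the ``bookkeeping'' you expect to be routine, but it is in fact the core of the argument. Your first approach---quadratic reduction to a normal form followed by a parity/counting argument on the residual---is not wrong in principle, but as written it is a hope rather than a proof: the assertion that what survives quadratic reduction is exactly one $\cB_4$-orbit is precisely the content of the theorem, and no mechanism for establishing it is given.
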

\begin{remark}\label{r:graphModel}
For $N=3$ the cycle is a complete graph, and therefore $I_G =
I_{\gl(G)} = 0$.  The single generator contained in $\cB_4$ defines
another interesting statistical model: the no-three-way interaction
model.  It is the hierarchical model of the graph $C_3$, considered as
a one-dimensional simplicial complex
(see~\cite{drton09:_lectur_algeb_statis}).  Such models were named
\emph{graph models} in~\cite{develinsullivant03}, in order to
distinguish them from graphical models.  For $N\geq 4$, all cliques of
the $N$-cycle are edges, and therefore the graphical model agrees with the
graph model.
\end{remark}
\begin{proof}[Proof of Theorem~\ref{t:ncycleMB}]
We prove that $\cB_2 \cup \cB_4$ is actually a Markov basis of the
graph model of $C_N$ for all $N\ge 1$.  We use induction on~$N$.  For
$N<3$ both $\cB_2$ and $\cB_4$ are empty, and the graph model contains
all probability distributions.  For $N=3$ the set $\cB_2$ is empty,
while $\cB_4$ contains only the defining quartic of the binary graph
model of~$C_3$. 

The $N$-cycle is a codimension-one toric fiber product of a chain of
length $(N-1)$ with a chain of length three (see~\cite{TFP-II}).
Since these chains are decomposable graphs, the Markov bases of these
chains consist of quadratic moves, corresponding to conditional
independence statements.  These Markov bases are slow-varying, in the
sense of~\cite{TFP-II}, and
gluing them yields moves in $\cB_2$.  By~\cite[Theorem~5.10]{TFP-II}, in
order to obtain a Markov basis of the $N$-cycle, we need to add
further quadrics (which belong to $\cB_2$) and a Markov basis of the
corresponding codimension-zero toric fiber product, which is the toric
fiber product of an $(N-1)$-cycle with the graph model of the
$3$-cycle.  By induction, we know the Markov bases of these smaller
cycles, and by~\cite[Theorem~5.4]{TFP-II} we need to lift these Markov
bases (and add some further quadrics that belong to $\cB_2$).  A lift
of a quadric gives again a quadric from $\cB_2$, and hence it suffices
to consider the quartics.

We first show that the ideal $I_{\cB_2\cup\cB_4}$ contains all tableaux of the form
\begin{equation}
\label{eq:3-lift}
\begin{bmatrix}
\1 & \1 & \1 & A \\
\1 & \2 & \2 & B \\
\2 & \1 & \2 & C \\
\2 & \2 & \1 & D
\end{bmatrix}
-
\begin{bmatrix}
\1 & \2 & \1 & A \\
\1 & \1 & \2 & B \\
\2 & \2 & \2 & C \\
\2 & \1 & \1 & D
\end{bmatrix},
\end{equation}
where each entry is a $\{1,2\}$-string of length at least one.
Suppose that there is a column~$k$ such that $A_k=C_k$.  Without loss
of generality, assume $A_k=C_k=1$.  Decompose the strings $A,B,C,D$
into substrings, such that $A = (A_lA_kA_r)$, and so on.  The tableau
calculation
\begin{multline*}
\begin{bmatrix}
\1 & \1 & \1 & A_l & 1 & A_r \\
\1 & \2 & \2 & B_l & b & B_r \\
\2 & \1 & \2 & C_l & 1 & C_r \\
\2 & \2 & \1 & D_l & d & D_r
\end{bmatrix}
\begin{matrix}
\ast \\ \\ \ast \\ \phantom{\ast}
\end{matrix}
\longrightarrow
\begin{bmatrix}
\1 & \1 & \2 & C_l & 1 & A_r \\
\1 & \2 & \2 & B_l & b & B_r \\
\2 & \1 & \1 & A_l & 1 & C_r \\
\2 & \2 & \1 & D_l & d & D_r
\end{bmatrix}
\begin{matrix}
\ast \\ \ast \\ + \\ +
\end{matrix}
\displaybreak[0]\\
\longrightarrow
\begin{bmatrix}
\1 & \2 & \2 & C_l & 1 & A_r \\
\1 & \1 & \2 & B_l & b & B_r \\
\2 & \2 & \1 & A_l & 1 & C_r \\
\2 & \1 & \1 & D_l & d & D_r
\end{bmatrix}
\begin{matrix}
\ast \\ \\ \ast \\ \phantom{+}
\end{matrix}
\longrightarrow
\begin{bmatrix}
\1 & \2 & \1 & A_l & 1 & A_r \\
\1 & \1 & \2 & B_l & b & B_r \\
\2 & \2 & \2 & C_l & 1 & C_r \\
\2 & \1 & \1 & D_l & d & D_r
\end{bmatrix}
\end{multline*}
shows that this move actually lies in the ideal generated by the
quadrics.  Here, $\ast$ and $+$ mark the rows to which a quadric has
been applied.  By symmetry, the same holds true if $B_k=D_k$ for
some~$k$.  If, in the tableau~\eqref{eq:3-lift}, $A_k\neq C_k$ and
$B_k\neq D_k$ for all $k$, then $C=\ol A$ and $D=\ol B$, and the move
is of the form
\begin{equation*}
\begin{bmatrix}
\1 & \1 & \1 & A \\
\1 & \2 & \2 & B \\
\2 & \1 & \2 & \ol A \\
\2 & \2 & \1 & \ol B
\end{bmatrix}
-
\begin{bmatrix}
\1 & \2 & \1 & A \\
\1 & \1 & \2 & B \\
\2 & \2 & \2 & \ol A \\
\2 & \1 & \1 & \ol B
\end{bmatrix}.
\end{equation*}
Hence, invoking the symmetry and exchanging $1\leftrightarrow2$ in
some columns of the last block, we may assume that any column in the
last block agrees with a column from either the first or the third
block.  If $
\begin{bmatrix}
A \\ B
\end{bmatrix}
=
\begin{bmatrix}
\1 & \1 \\
\2 & \1
\end{bmatrix}
$, then the lift belongs to~$\cB_4$.  Otherwise, using a rotation of
the cycle the move can be brought into the form
\begin{equation*}
\begin{bmatrix}
\1 & \1 & \1 & 1 &  A     & 1 \\
\1 & \2 & \2 & 1 &  B     & 2 \\
\2 & \1 & \2 & 2 &  \ol A & 2 \\
\2 & \2 & \1 & 2 &  \ol B & 1
\end{bmatrix}
-
\begin{bmatrix}
\1 & \2 & \1 & 1 & A     & 1 \\
\1 & \1 & \2 & 1 & B     & 2 \\
\2 & \2 & \2 & 2 & \ol A & 2 \\
\2 & \1 & \1 & 2 & \ol B & 1
\end{bmatrix}.
\end{equation*}
Applying quadrics to the first two rows transforms this into the move
\begin{equation*}
\begin{bmatrix}
\1 & \1 & \1 & 1 &  B     & 2 \\
\1 & \2 & \2 & 1 &  A     & 1 \\
\2 & \1 & \2 & 2 &  \ol A & 2 \\
\2 & \2 & \1 & 2 &  \ol B & 1
\end{bmatrix}
-
\begin{bmatrix}
\1 & \2 & \1 & 1 & B     & 2 \\
\1 & \1 & \2 & 1 & A     & 1 \\
\2 & \2 & \2 & 2 & \ol A & 2 \\
\2 & \1 & \1 & 2 & \ol B & 1
\end{bmatrix}.
\end{equation*}
In this move, the first and third entries of the last column agree,
and by the argument given above, it is a combination of quadrics.  Now
the theorem follows from the observation that, up to symmetry, any
lifted quartic is of the form~\eqref{eq:3-lift}.
\end{proof}

For any quartic $f$ of the form~\eqref{e:n-cycle-generic-quartics} let
\begin{equation*}
P_f = \< p_i: p_i\text{ divides neither } f^+\text{ nor }f^- \>.
\end{equation*}

\begin{lemma}
The ideals $P_f$ are prime ideals containing~$I_{\gl(C_{N})}$.
\end{lemma}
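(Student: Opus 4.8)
The plan is to prove the two assertions separately: first that each $P_f$ is prime, and second that $I_{\gl(C_N)} \subseteq P_f$. For primality, I would observe that $P_f$ is generated by a subset of the variables $p_i$. An ideal generated by variables in a polynomial ring over a field is automatically prime, since the quotient is again a polynomial ring, hence a domain. So this part is immediate once one makes that observation explicit; the only thing to check is that $P_f$ is indeed of this shape, which is clear from the definition $P_f = \langle p_i : p_i \text{ divides neither } f^+ \text{ nor } f^- \rangle$.

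The containment $I_{\gl(C_N)} \subseteq P_f$ is the substantive part. Since $I_{\gl(C_N)}$ is generated by the quadrics in $\cB_2$ (recall from the discussion after \eqref{e:ncycle-quads} that the further global Markov quadrics lie in the ideal generated by $\cB_2$), it suffices to show that every quadric $q \in \cB_2$ lies in $P_f$. A quadric $q$ has the form $p_a p_b - p_c p_d$ where the four tableau rows involved differ only in the entries of one "middle block"; concretely $q = [{a\,A\,b\,B \atop a\,A'\,b\,B'}] - [{a\,A'\,b\,B \atop a\,A\,b\,B'}]$ as in \eqref{e:ncycle-quads}. To show $q \in P_f$ it is enough to show that $p_a p_b \in P_f$, i.e. that at least one of the two variables $p_a, p_b$ appearing in $q^+$ is a generator of $P_f$ — equivalently, that at least one of the two corresponding states does not appear as a row in either $f^+$ or $f^-$; the same will then follow for $q^-$ since $q^-$ is built from the same four rows.

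So the crux is a combinatorial claim: given the quartic $f = [{A\,B\,C \atop A\,\bar B\,\bar C}; {\bar A\,B\,\bar C \atop \bar A\,\bar B\,C}] - (\cdots)$ of the form \eqref{e:n-cycle-generic-quartics}, whose eight distinct rows are the eight states obtained from a fixed "sign pattern" decomposition of $[N]$ into blocks $A\text{-positions}$, $B\text{-positions}$, $C\text{-positions}$, and given any quadric $q \in \cB_2$ whose two positive-part rows agree on two non-adjacent columns $k$ and $l$ (the separator coordinates), one of those two rows is not among the eight rows of $f$. I would argue as follows: the eight rows of $f$ are characterized by the property that, writing each row as a $\{1,2\}$-string, the restriction to the $A$-block is constant ($A$ or $\bar A$), likewise for the $B$-block and $C$-block, and moreover the three "signs" multiply to a fixed parity (that is exactly which four of the eight total sign-combinations lie in $f^+$ versus $f^-$, but all eight states appear in $f$). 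Now the two positive rows of $q$ differ precisely by swapping the middle-block entries $A \leftrightarrow A'$ while holding the separator columns fixed; since $A \neq A'$, these two rows differ somewhere, and I claim they cannot both be consistent with $f$'s block-constant pattern. The key point: the columns $k, l$ where $q$'s two rows agree are the separator, and the columns where they differ form a single contiguous arc of the cycle strictly between $k$ and $l$; for a row to be a row of $f$, its restriction to each of $f$'s three blocks must equal one of the two fixed strings for that block. If both rows of $q^+$ were rows of $f$, then since they agree on columns $k,l$ and differ on the arc between them, the arc must straddle a boundary between two of $f$'s blocks (otherwise flipping entries inside one block of $f$ gives a string that is neither the block-string nor its opposite, unless the flip is the full opposite — but a partial flip on $A \neq A'$ with $A,A'$ arbitrary cannot always be engineered as a full opposite). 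Chasing this forces a contradiction with $A' \neq A$ being arbitrary except that both rows lie in $f$; I would make this precise by a short case analysis on how the $q$-arc meets the three blocks of $f$.

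I expect the main obstacle to be precisely this last combinatorial case analysis — reconciling the "fix two non-adjacent columns, vary a contiguous arc between them" structure of a $\cB_2$-quadric with the "three blocks, each entirely constant up to global flip" structure of an $f$ from \eqref{e:n-cycle-generic-quartics}, taking proper care of the cyclic symmetry (both tableaux are only defined up to rotation) and of the degenerate cases where a block of $f$ or the varying part of $q$ has length one. If the direct argument proves unwieldy, an alternative I would fall back on is to compute $u_{P_f}$ — the product of the variables $p_i$ not in $P_f$, i.e. the product over the eight states appearing in $f$ — and check directly that every generator $p^{u} - p^{v}$ of $I_{\gl(C_N)}$ satisfies $p^u, p^v \in P_f$ by verifying that neither monomial's support is contained in the eight-state set, which reduces to the same combinatorics but may be cleaner to phrase via the separator condition "every $f$-state has some $2$-way margin entry equal to $2$ at the separator pair, whereas after applying a quadric one reaches a state isolated from $f$." Either way the primality half is trivial and the containment half is where all the work sits.
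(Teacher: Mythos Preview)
Your overall plan matches the paper's: primality is immediate because $P_f$ is generated by variables, and containment is checked on the quadric generators by arguing that both monomials of any nontrivial quadric lie in $P_f$. That is exactly what the paper does.

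The gap is in your combinatorial argument. You write that the two rows of $q^+$ ``differ precisely by swapping the middle-block entries $A \leftrightarrow A'$ while holding the separator columns fixed'' and that ``the columns where they differ form a single contiguous arc.'' This is not correct: the two rows of $q^+$ are $aAbB$ and $aA'bB'$, which differ on \emph{both} arcs between the separator columns (since $A\neq A'$ and $B\neq B'$). Your subsequent case analysis is built on this single-arc picture, so it does not go through as written.

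The paper's argument avoids this by reversing the logic. Assume the left term is \emph{not} in $P_f$; then both rows $aAbB$ and $aA'bB'$ occur among the eight rows of $f^+f^-$. Every row of $f$ is obtained by choosing, for each of the three contiguous $f$-blocks, one of two fixed strings. The two rows agree at the separator columns; each separator column lies in some $f$-block, and the value there pins down that entire block for both rows. If the two separator columns lie in different $f$-blocks, the two rows agree on those two blocks and can differ only on the third; since that third block is contiguous and contains neither separator column, it lies entirely inside one arc, forcing agreement on the other arc, i.e.\ $A=A'$ or $B=B'$. If the two separator columns lie in the same $f$-block, that block is contiguous and contains both separator columns, hence contains one entire arc, again giving $A=A'$ or $B=B'$. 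Either way the quadric is zero, so every nontrivial quadric has its left (and by symmetry right) monomial in $P_f$. This is the missing step your case analysis was reaching for.
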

\begin{proof}
Clearly, $P_f$ is a monomial prime ideal.  Each generator of $I_{\gl(C_N)}$ is
of the form~\eqref{e:ncycle-quads}.
If the left term $\left[\begin{smallmatrix}
  a&A&b&B \\
  a&A'&b&B'
\end{smallmatrix}\right]$ is not contained in $P_f$, then it divides $f^+f^-$.
In this case either $A=A'$ or $B=B'$, and so both terms in
\eqref{e:ncycle-quads} agree.  Hence, $P_f$ contains all generators of $I_{\gl(C_N)}$.
\end{proof}

\begin{prop}
\label{prop:n-cycle-minprimes}
The minimal primes of $I_{\gl(C_{N})}$ are precisely the toric ideal 
$I_{C_{N}}$ and the monomial ideals~$P_{f}$.
\end{prop}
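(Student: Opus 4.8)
The plan is to pin down the minimal primes by means of the Eisenbud--Sturmfels structure theory for binomial ideals: over the algebraically closed field $\kk$, every minimal prime $Q$ of $I_{\gl(C_{N})}$ is a \emph{binomial prime}, i.e.\ $Q = I_\rho + \<p_x : x\in S\>$ for some $S\subseteq\cX$ and a saturated partial character $\rho$ on $\cX\minus S$ (so $I_\rho$ is a prime lattice ideal containing no monomial). I would then split into cases according to $S = \{x\in\cX : p_x\in Q\}$. If $S=\nothing$, then $Q$ contains no variable, so $I_{\gl(C_N)}\subseteq Q$ gives $I_{C_N} = I_{\gl(C_N)}:(\prod_x p_x)^{\infty}\subseteq Q$ (Hammersley--Clifford), and since $I_{C_N}$ is prime and $Q$ is minimal over $I_{\gl(C_N)}$, we get $Q=I_{C_N}$. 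The case $S=\cX$ is impossible because $V(I_{\gl(C_N)})\supseteq V(I_{C_N})$ is positive dimensional. So the substance is the case $\nothing\ne S\subsetneq\cX$; set $\Delta:=\cX\minus S$.

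Reducing each quadratic generator $m_1-m_2\in\cB_2$ of $I_{\gl(C_N)}$ modulo $\<p_x:x\in S\>$ and using that $I_\rho$ has no monomials, one sees that $\Delta$ must be \emph{closed} in the following sense: for every quadric in $\cB_2$, either both of its monomials are supported entirely on $\Delta$ (and then their difference lies in $I_\rho$), or each of them involves a variable from $S$. Unwinding \eqref{e:ncycle-quads}, $\cB_2$ identifies, for non-adjacent cycle positions $j,k$, the two diagonals of the $2\times2$ grid of states obtained by fixing the values at $j,k$ and choosing independently, on each of the two arcs between $j$ and $k$, between two prescribed strings; closedness says that whenever one diagonal of such a grid lies in $\Delta$, so does the other. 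The combinatorial heart of the proof is the claim that \emph{a nonempty, proper, closed $\Delta\subseteq\cX$ is contained in $\supp f$ for some quartic $f\in\cB_4$}: after normalizing via the flip symmetry $(\ZZ/2)^N$ so that $\1\in\Delta$, $\supp f$ is exactly the set of eight states that are constant on each of the three arcs of the cycle cut out by the two cut points of $f$. Granting this, $Q = I_\rho + \<p_x:x\in S\>\supseteq\<p_x:x\notin\supp f\> = P_f$, and since $I_{\gl(C_N)}\subseteq P_f\subseteq Q$ with $Q$ minimal, $Q=P_f$. Hence every minimal prime of $I_{\gl(C_N)}$ appears in the asserted list.

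For the converse direction, $I_{C_N}$ and every $P_f$ are primes containing $I_{\gl(C_N)}$ (the toric component, respectively the preceding lemma), and they form an antichain: $I_{C_N}$ contains the quartic $f\notin P_f$ while $P_f$ contains a variable $p_x$ ($x\notin\supp f$) that the toric prime $I_{C_N}$ does not contain; and $P_f\subseteq P_g$ holds iff $\supp g\subseteq\supp f$ iff $\supp f=\supp g$ (both have size $8$) iff $P_f=P_g$. Since minimal primes are cofinal and we have shown all of them lie in this antichain, each of $I_{C_N}$ and $P_f$ is in fact a minimal prime of $I_{\gl(C_N)}$, and the list is complete.

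The step I expect to be the main obstacle is the combinatorial claim. The natural strategy is to record, for a closed $\Delta$, the set $B(\Delta)$ of cycle edges $(i,i+1)$ such that $x_i\ne x_{i+1}$ for some $x\in\Delta$; every other edge forces equality throughout $\Delta$, so $\Delta$ lies in the ``constant on arcs'' set determined by $B(\Delta)$, and it suffices to show a proper closed $\Delta$ has $\abs{B(\Delta)}\le 3$. If four or more edges were used, one should be able to combine the corresponding ``flipping'' states through repeated grid-diagonal swaps to generate all of $\cX$, contradicting properness; here the tableau reductions already developed in the proof of Theorem~\ref{t:ncycleMB}, in particular the manipulations surrounding \eqref{eq:3-lift}, are exactly the closure moves one needs. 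Verifying that distinct quartics genuinely produce these eight-element supports, and that the flip normalization is legitimate, is routine from \eqref{e:n-cycle-generic-quartics} and the symmetry of $\cB_2$.
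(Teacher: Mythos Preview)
Your overall architecture via Eisenbud--Sturmfels is sound, and the converse direction (the antichain argument) is fine. But the combinatorial claim you flag as the main obstacle is false as stated, and your sketched strategy for it cannot succeed. Take $N=6$ and
\[
\Delta \;=\; \{\,x\in[2]^6 : x_4=x_5=x_6\,\}.
\]
A case check over all non-adjacent pairs $(i,j)$ in $C_6$ shows that whenever a $\cB_2$-monomial is supported on $\Delta$, so is its partner; concretely, under the bijection $\Delta\cong[2]^4$ the surviving quadrics are exactly those of $I_{\gl(C_4)}$. Thus $\Delta$ is a nonempty proper closed set, yet $|\Delta|=16>8=|\supp f|$ for every $f\in\cB_4$, so $\Delta$ is not contained in any $\supp f$. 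Moreover $B(\Delta)=\{(1,2),(2,3),(3,4),(6,1)\}$ has four edges, directly contradicting your proposed bound $|B(\Delta)|\le 3$. The underlying issue is that closedness under $\cB_2$ alone is too weak: the prime $\<p_x:x\notin\Delta\>+I_{C_4}$ contains $I_{\gl(C_6)}$ and has this $\Delta$, but it also contains $I_{C_6}$ and hence is \emph{not} minimal. So the restriction to minimal primes is essential and must be used before, not after, bounding~$\Delta$.

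The paper's proof supplies exactly this missing step, and does so without any combinatorial classification of closed sets. Since a minimal prime $Q\neq I_{C_N}$ cannot contain the minimal prime $I_{C_N}$, and since $\cB_2\subseteq I_{\gl(C_N)}\subseteq Q$, the Markov basis description (Theorem~\ref{t:ncycleMB}) forces some quartic $f\in\cB_4$ to lie outside~$Q$. Lemma~\ref{l:toric-component-avoidance} then gives $p_K f\in I_{\gl(C_N)}\subseteq Q$ for every variable $p_K$ not dividing $f^+f^-$; primality of $Q$ with $f\notin Q$ yields $p_K\in Q$, i.e.\ $P_f\subseteq Q$, whence $Q=P_f$ by minimality. (Equivalently, the paper phrases this set-theoretically: for $p\in V(I_{\gl(C_N)})\setminus V(I_{C_N})$ pick $f$ with $f(p)\ne 0$ and conclude $p\in V(P_f)$.) Your framework can absorb this argument verbatim, but note that it uses the quartics and Lemma~\ref{l:toric-component-avoidance} in an essential way; there is no purely $\cB_2$-based route.
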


The proof of Proposition~\ref{prop:n-cycle-minprimes}
makes use of the following lemma.

\begin{lemma}
\label{l:toric-component-avoidance}
Let $f = f^+ - f^- \in \cB_4$ be a quartic generator of~$I_{C_{N}}$.  If the
variable $p_{i}$ divides neither  $f^+$ nor $f^-$, then $p_{i}f\in
I_{\gl(C_N)}$.
\end{lemma}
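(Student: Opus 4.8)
The plan is to directly exhibit a specific monomial $p_j$ (depending on $f$ and $p_i$) such that $p_i f = p_j \cdot g$ modulo $I_{\gl(C_N)}$ for some binomial $g \in I_{\gl(C_N)}$, or—more realistically—to find a two-step factorization through $\cB_2$. Write $f = f^+ - f^-$ with $f^+ = p_{ABC}p_{A\ol{B}\ol{C}}p_{\ol{A}B\ol{C}}p_{\ol{A}\ol{B}C}$ and $f^- = p_{AB\ol{C}}p_{A\ol{B}C}p_{\ol{A}BC}p_{\ol{A}\ol{B}\ol{C}}$ as in \eqref{e:n-cycle-generic-quartics}. The state $i$ is a binary string of length $N$ that, on the three blocks of lengths $k$, $l-k$, $N-l$, is \emph{not} one of the eight combinations appearing among the exponents of $f$; that is, restricted to the blocks it is neither $A$ nor $\ol A$ in block one, or similarly fails in block two or block three. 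First I would use the cyclic symmetry to fix the block structure, and then do a short case analysis according to which block the state $i$ "differs" in.

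The key step is the following: in the block where $i$ differs, say block one, the string $i|_{\text{block 1}}$ disagrees with both $A$ and $\ol A$, so there is a coordinate $c$ in block one where $i_c = A_c = \ol A_c$ is impossible—rather, we should phrase it as: since $i|_{\text{block 1}} \notin \{A, \ol A\}$, there exist two coordinates $c, c'$ in block one with $i_c = A_c$ but $i_{c'} = \ol A_{c'} \neq A_{c'}$, hence a coordinate where $i$ agrees with $A$ and one where it agrees with $\ol A$. The idea is then to first combine $p_i$ with one factor of $f^+$, say $p_{ABC}$, via a quadratic move in $\cB_2$ supported on those two agreeing/disagreeing coordinates, producing $p_i p_{ABC} \equiv p_{i'} p_{A'BC}$ where $i'$ and $A'$ interpolate. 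One iterates this, walking the string $i$ toward $A$ (or $\ol A$) coordinate by coordinate inside block one, each step a legal $\cB_2$-move since blocks two and three of the two rows still agree. After finitely many steps one reaches a monomial $p_i f^+$ rewritten (modulo $I_{\gl(C_N)}$) as $p_{\tilde i} \tilde f^+$ where now $p_{\tilde i}$ \emph{does} divide—by construction, we have merged into one of the eight state-types and can then use the quartic relation $f \in I_{C_N}$ itself, followed by running the same sequence of quadratic moves backward to recover $p_i f^-$. The upshot is $p_i f^+ \equiv p_i f^-$ in $I_{\gl(C_N)}$, i.e.\ $p_i f \in I_{\gl(C_N)}$.

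The main obstacle is organizing the case analysis cleanly: there are three blocks, and $p_i$ may fail to match the pattern in any one (or more) of them, and within a block there are two "target" strings $A$ and $\ol A$ to walk toward. I expect that after fixing cyclic symmetry one can assume $i$ differs in block one and argue that it suffices to treat that case, the others being symmetric; the quadratic-move bookkeeping (checking that at each step the move genuinely lies in $\cB_2$, i.e.\ that the $a$- and $b$-columns are non-adjacent and the flanking substrings are unequal where required) is the routine but fiddly part, analogous to the tableau computation already carried out in the proof of Theorem~\ref{t:ncycleMB}. A secondary subtlety: one must ensure the "walk" never accidentally uses a move outside $\cB_2$ because of the adjacency restriction $0 < l < N-2$ in \eqref{e:ncycle-quads}; if a naive walk would, one reroutes through an intermediate state, which is always possible since $N \ge 4$ gives enough room. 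I would present the block-one case in full tableau detail and remark that the remaining cases follow by the cyclic symmetry and the $1 \leftrightarrow 2$ symmetry of the construction.
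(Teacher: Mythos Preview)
Your proposal has a genuine gap. The instinct to use $\cB_2$ moves to ``walk'' the extra row $p_i$ toward one of the eight states in the support of $f$ is reasonable, but the step ``then use the quartic relation $f\in I_{C_N}$ itself'' is exactly what is forbidden: $f$ lies in $I_{C_N}\setminus I_{\gl(C_N)}$, so it cannot be invoked as a move inside $I_{\gl(C_N)}$. Moreover, each $\cB_2$ move alters two rows simultaneously, so after your walk the four rows that started as $f^+$ have become some other monomial $\tilde f^+$; even if the fifth row is now one of the eight states, you are not holding $p_{\tilde i}f^+$, and ``running the same quadric moves backward'' on the $f^-$ side is not well-defined, since those particular moves were chosen relative to the rows of $f^+$ and need not be applicable to~$p_if^-$.

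The paper's proof also manipulates the five-row tableau with quadrics, but with a different target and, crucially, a different finishing argument. After using the $1\leftrightarrow 2$ symmetry to arrange that each block of $i=KLM$ contains at least one~$1$, one transforms the tableau $p_if^-$ so that its fifth row becomes the all-ones string. Since $p_{\1\1\1}$ already divides $f^+$ (it is the first row of $f^+$), the resulting binomial factors as $p_{\1\1\1}\tilde f$ with $\tilde f\in I_{C_N}$ of degree four. The point you are missing is a multidegree argument: because $KLM\neq\1\1\1$, some column of $\tilde f^+$ has only one~$1$ among its four entries, so the $A_{C_N}$-multidegree of $\tilde f$ is not that of any $\cB_4$ quartic. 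At degree four this means no $\cB_4$ move is applicable anywhere in the fiber, so every Markov-basis path from $\tilde f^+$ to $\tilde f^-$ uses only $\cB_2$ moves, giving $\tilde f\in I_{\gl(C_N)}$ and hence $p_if\in I_{\gl(C_N)}$.
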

\begin{proof}
We have to show that $p_{i}f$ is a combination of quadrics coming from
conditional independence statements of the $N$-cycle.  Up to symmetry,
$p_if$ is of the form
\begin{equation*}
\begin{bmatrix}
\1 & \1 & \1\\
\1 & \2 & \2\\
\2 & \1 & \2\\
\2 & \2 & \1\\
K & L & M 
\end{bmatrix}
-
\begin{bmatrix}
\2 & \1 & \1\\
\1 & \2 & \1\\
\1 & \1 & \2\\
\2 & \2 & \2 \\
K & L & M 
\end{bmatrix},
\end{equation*}
where $p_i=p_{KLM}$.  We now transform $p_if$ into another binomial
$p_j\tilde f$ of total degree five using quadrics.  
Then $p_j\tilde f$ belongs to the toric ideal~$I_{C_N}$, and hence
$\tilde f\in I_{C_N}$.  Since $p_j\neq p_i$ the multidegree of $\tilde
f$ is not the multidegree of any quartic in~$\cB_4$.  Therefore,
$\tilde f$ must be a combination of quadrics, and we are done.

Using the symmetry we may assume that $K,L,M$ all contain at least
one~1, i.e.~$KLM=K_11K_2L_11L_2M_11M_2$.  
The tableau calculation
\begin{multline*}
\begin{bmatrix}
\2 & \1 & \1\\
\1 & \2 & \1\\
\1 & \1 & \2\\
\2 & \2 & \2 \\
K & L & M 
\end{bmatrix}
\begin{matrix}
\phantom{\ast} \\ \\ \ast \\ \\ \ast
\end{matrix}
\longrightarrow
\begin{bmatrix}
~~\2~2~\2~~ & ~~1~\1~1~~ & \1\\
~~\1~1~\1~~ & ~~2~\2~2~~ & \1\\
~~\1~1 K_2  &  L_1\1~1~~ & \2\\
~~\2~2~\2~~ & ~~2~\2~2~~ & \2 \\
 K_1 1~\1~~ & ~~1~\1 L_2 & M 
\end{bmatrix}
\begin{matrix}
\phantom{\ast} \\ \ast \\ \\ \\ \ast
\end{matrix}
\longrightarrow
\begin{bmatrix}
~~\2~2~\2~~ & ~~\1~1~\1~~ & ~~\1~1~\1~~\\
 K_1 1~\1~~ & ~~\2~2~\2~~ & ~~\1~1 M_2 \\
~~\1~1 K_2  &  L_1 1~\1~~ & ~~\2~2~\2~~\\
~~\2~2~\2~~ & ~~\2~2~\2~~ & ~~\2~2~\2~~ \\
~~\1~1~\1~~ & ~~\1~1 L_2  &  M_1 1~\1~~
\end{bmatrix}
\begin{matrix}
\ast \\ \\ \\ \\ \ast
\end{matrix}
\displaybreak[0]\\
\longrightarrow
\begin{bmatrix}
~~\2~2~\2~~ & ~~\1~1 L_2  &  M_1 1~\1~~\\
 K_1 1~\1~~ & ~~\2~2~\2~~ & ~~\1~1 M_2 \\
~~\1~1 K_2  &  L_1 1~\1~~ & ~~\2~2~\2~~\\
~~\2~2~\2~~ & ~~\2~2~\2~~ & ~~\2~2~\2~~ \\
~~\1~1~\1~~ & ~~\1~1~\1~~ & ~~\1~1~\1~~
\end{bmatrix}
\end{multline*}
shows how to transform the second term of $p_if$ such that the
resulting binomial is of the form $p_{\1\1\1}\tilde f$.
\end{proof}

\begin{proof}[Proof of Proposition~\ref{prop:n-cycle-minprimes}]
Let $p\in V(I_{\gl(C_{N})})\setminus V(I_{C_N})$.  Then there is a
quartic $f\in\cB_4$ such that $f(p)\neq 0$.
Lemma~\ref{l:toric-component-avoidance} implies that $p_K=0$ for all
$K$ such that $p_K$ does not divide $f^+f^-$, and hence $p\in V(P_f)$.
Clearly, the ideals $P_{f}$ are all distinct.  By symmetry, they are
all minimal primes.
\end{proof}

\begin{thm}
\label{t:ncycle-primedec}
The global Markov ideal $I_{\gl(C_{N})}$ is  radical and 
has prime decomposition
\[
I_{\gl(C_{N})} = 
 I_{C_N} \cap \bigcap_{f \in
\cB_4} P_f.
\]
\end{thm}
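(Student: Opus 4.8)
The plan is to prove the claimed decomposition $I_{\gl(C_N)} = I_{C_N} \cap \bigcap_{f\in\cB_4} P_f$ and deduce radicality at the same time, since the right-hand side is visibly radical: it is an intersection of the toric prime $I_{C_N}$ with the monomial primes $P_f$, and any finite intersection of radical ideals is radical. By Proposition~\ref{prop:n-cycle-minprimes} we already know that $I_{C_N}$ and the $P_f$ are exactly the minimal primes of $I_{\gl(C_N)}$, so the inclusion $I_{\gl(C_N)} \subseteq I_{C_N} \cap \bigcap_f P_f$ is automatic (each of these primes contains $I_{\gl(C_N)}$, as verified in the preceding lemmas), and what remains is the reverse inclusion $I_{C_N} \cap \bigcap_f P_f \subseteq I_{\gl(C_N)}$ — equivalently, that $I_{\gl(C_N)}$ has no embedded components and equals the intersection of its minimal primes.

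My approach would be to take an arbitrary element $g$ of the right-hand side and show $g\in I_{\gl(C_N)}$, working with the quadratic moves $\cB_2$ and quartic moves $\cB_4$ via the tableau calculus already set up. First I would reduce $g$ modulo the quadrics $\cB_2$: the quadrics generate a unital binomial ideal, and modulo $I_{\cB_2}$ every monomial can be put in a normal form by repeatedly applying the swapping moves in~\eqref{e:ncycle-quads}. The key point is then that a monomial lies in $I_{C_N}$-equivalence class structure that is controlled by the $C_N$-margins, and that the only binomials surviving in $I_{C_N}$ after reducing by $\cB_2$ are (combinations of) the quartics in $\cB_4$ — this is essentially the content of the Markov basis Theorem~\ref{t:ncycleMB}, which says $\cB_2\cup\cB_4$ generates $I_{C_N}$. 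So $g \equiv \sum c_f f \pmod{I_{\cB_2}}$ for some quartics $f\in\cB_4$ and coefficients $c_f$ that are polynomials. The hypothesis $g\in\bigcap_f P_f$ must then force, for each $f$, that the monomials appearing in $c_f$ are divisible by some variable $p_i$ not dividing $f^+f^-$ — because $P_f$ is the monomial prime omitting exactly the variables dividing $f^+f^-$, and $f\notin P_f$ while $g\in P_f$. At that point Lemma~\ref{l:toric-component-avoidance} finishes the job: $p_i f \in I_{\gl(C_N)}$ whenever $p_i$ divides neither $f^+$ nor $f^-$, so each term $c_f f$ already lies in $I_{\gl(C_N)}$, and hence so does $g$.

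The step I expect to be the main obstacle is making the reduction "$g \equiv \sum c_f f \pmod{I_{\cB_2}}$ with the $P_f$-membership controlling the $c_f$" fully rigorous, rather than heuristic. The subtlety is that a single polynomial $g$ in the toric ideal need not be a single quartic — it can be a syzygy-like combination, and the tableau arguments in the proof of Theorem~\ref{t:ncycleMB} only show that individual quartic \emph{lifts} reduce correctly. One clean way to handle this is to argue primarily at the level of varieties, invoking Proposition~\ref{prop:n-cycle-minprimes} to get $\sqrt{I_{\gl(C_N)}} = I_{C_N}\cap\bigcap_f P_f$, and then separately prove that $I_{\gl(C_N)}$ is itself radical; with radicality in hand the decomposition into minimal primes is immediate. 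To prove radicality one can exhibit $I_{\gl(C_N)}$ as a saturation or use the cellular decomposition machinery: $I_{\gl(C_N)}$ is a sum of conditional independence ideals, each of which (being a $2\times 2$ minor ideal of a matrix of distinct variables) is prime, and one would show their sum is radical by checking the relevant Gröbner-basis or toric-fiber-product compatibility, or — most in the spirit of this paper — by directly verifying that the monomial-modulo-quadrics normal form computed above is square-free and that no power of a non-member becomes a member. Given the tableau reductions already developed, I would try the direct route first: show that if $g^k\in I_{\gl(C_N)}$ then reducing $g$ by $\cB_2$ to normal form and using Lemma~\ref{l:toric-component-avoidance} forces $g\in I_{\gl(C_N)}$, closing the argument.
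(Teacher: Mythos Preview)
Your overall strategy---combine the Markov basis (Theorem~\ref{t:ncycleMB}), the membership in each $P_f$, and Lemma~\ref{l:toric-component-avoidance}---is exactly right, and these are precisely the ingredients the paper uses. But the gap you yourself flag is real, and your proposed workarounds (prove radicality independently via Gr\"obner or toric-fiber-product machinery, or a normal-form argument) are vague and would require substantial extra work. The paper sidesteps the whole issue with one clean observation you are missing: by Proposition~\ref{prop:n-cycle-minprimes} the intersection $J := I_{C_N}\cap\bigcap_f P_f$ is the radical of the binomial ideal $I_{\gl(C_N)}$, and by \cite[Theorem~3.1]{eisenbud96:_binom_ideal} the radical of a binomial ideal is again binomial. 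Hence $J$ is generated by binomials, and it suffices to show that every \emph{binomial} $p^u-p^v\in J$ lies in $I_{\gl(C_N)}$. This kills the ``syzygy-like combination'' problem at a stroke: you never have to write an arbitrary $g$ as $\sum c_f f$ and worry about interactions between different quartics.

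Once reduced to a single binomial $p^u-p^v\in J\subseteq I_{C_N}$, the paper walks along a Markov-basis path $v=u_0,u_1,\dots,u_r=u$ and argues inductively. At the first quartic step $u_{i-1}\to u_i$ (corresponding to some $f\in\cB_4$), the fact that $p^u-p^v\in P_f$ together with $p^v-p^{u_{i-1}}\in I_{\gl(C_N)}\subseteq P_f$ forces $p^{u_{i-1}}\in P_f$ (here one uses that $P_f$ is a \emph{monomial} ideal, so a binomial lies in it only if both terms do). Thus the monomial $p^{u_{i-1}}$ carries an extra variable $p_i$ not dividing $f^+f^-$, and that same $p_i$ divides $p^{u_i}$ as well; Lemma~\ref{l:toric-component-avoidance} then gives $p^{u_{i-1}}-p^{u_i}\in I_{\gl(C_N)}$. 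Iterate. This path argument is what replaces your coefficient-extraction step, and it requires no separate proof of radicality---radicality falls out of the equality $I_{\gl(C_N)}=J$ once it is established.

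For the record, your decomposition approach \emph{can} be salvaged directly: one checks that any two distinct quartics $f\neq f'$ in $\cB_4$ have distinct eight-element variable supports (this is implicit in the ``clearly'' of the proof of Proposition~\ref{prop:n-cycle-minprimes}), hence $f'\in P_f$, and then $\sum_{f'} c_{f'}f'\in P_f$ does force $c_f\in P_f$ as you wanted. But the paper's binomial-plus-path argument is shorter and avoids this extra combinatorial verification.
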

\begin{proof}
The intersection $J \defas I_{C_N} \cap \bigcap_{f \in \cB_4} P_f$ is
a binomial ideal, because, by
Proposition~\ref{prop:n-cycle-minprimes}, it is the radical of the
binomial
ideal~$I_{\gl(C_{N})}$~\cite[Theorem~3.1]{eisenbud96:_binom_ideal}.
Therefore, it suffices to consider an arbitrary binomial $p^u - p^v \in
J$ and show that it is contained in~$I_{\gl(C_{N})}$.  Since $J$ is
homogeneous in the multigrading of the toric ideal $I_{C_{N}}$, there
exists a sequence $v=u_0,u_1,\dots,u_r=u$ such that $u_i-u_{i-1}$ is a
move in the Markov basis $\cB_2\cup\cB_4$ of~$I_{C_{N}}$.  If only
quadratic moves are necessary, then $p^u-p^v\in I$.  Assume that
$u_i-u_{i-1}$ is the first quartic move, and let $f$ be the
corresponding quartic binomial.  Then $p^v-p^{u_{i-1}}\in
I_{\gl(C_N)}\subset P_f$, and hence $p^{u_{i-1}}-p^u\in P_f$.
Therefore, $p^{u_{i-1}}$ must be divisible by a variable
generating~$P_f$; and by definition, $p^{u_{i}}$ is divisible by the
same variable.  Hence, $p^{u_i}-p^{u_{i-1}}\in I_{\gl(C_N)}$ by
Lemma~\ref{l:toric-component-avoidance}.  Iteration of this argument
shows $p^u-p^v\in I_{\gl(C_N)}$.
\end{proof}

\begin{remark}
The minimal primes of $I_{\gl(C_N)}$ 
are exactly witnessed by degree four binomials in the Markov basis of
$I_{C_N}$.  More precisely, if $f \in \cB_4$ then $(I_{C_N}:f)$ is a
minimal prime, and all minimal primes arise in this way.
\end{remark}

\begin{proof}[Proof of Theorem~\ref{thm:Cn-pos-marg}]
Let $P=P_f$ be one of the minimal primes, where $f$ is a quartic of
the form~\eqref{e:n-cycle-generic-quartics}.  Then $m_P = f^+ f^-$.
Since $N\ge 4$, the quartic $f$ has at least two neighbouring columns
$i,i+1$ which are identical (up to symmetry).  Hence not all
components of the $\{i,i+1\}$-marginal of the exponent vector of $m_P$
can be positive.
\end{proof}


\section{\texorpdfstring{The complete bipartite graph $K_{2,N-2}$}{The complete bipartite graph K(2,N-2)}}
\label{sec:K2n}

In this section we study the complete bipartite graph $K_{2,N-2}$ with
vertex sets $\{1,2\}$, $\{3,\dots,N\}$ and with $d_1=d_2=2$ and
arbitrary $d_3,\dots,d_N$.  A Markov basis of the graphical model is
presented in Theorem~\ref{thm:Markov-basis-K2N}.
Using this Markov basis we compute a prime decomposition and show that
$I_{\gl(K_{2,N-2})}$ is radical (Theorem~\ref{thm:decomposition-K2N}).
With this decomposition we prove that for $N>4$ the complete
bipartite graph does not satisfy the positive margins property
(Theorem~\ref{thm:K22-pos-marg}), but the interior point property
(Theorem~\ref{thm:K2N-final-answer}).

The set $\gl(K_{2,N-2})$ consists of the CI statement
$\ind{X_1}{X_2}[\{X_3,\dots,X_N\}]$ and all statements
$\ind{X_A}{X_B}[\{X_1,X_2,X_C\}]$, where $A,B,C$ is a partition of
$\{3,\dots,N\}$.  The variables of the polynomial ring
$\kk[p_x:x\in\cX]$ can be arranged in a $(2\times 2\times
d_3\times\dots\times d_n)$-tensor ${p} = (p_{ijK} : i\in[2], j\in [2],
K\in\prod_{s=3}^N[d_s])$.  Define $(d_3\times\dots\times d_N)$-tensors
$A^{ij}$ and $(2\times 2)$-matrices $B^{K}$ via $A^{ij}_{K} =
B^{K}_{ij} := p_{ijK}$.  Then $A^{ij}$ and $B^{K}$ are slices of $p$.
The two sets of CI statements in $\gl(K_{2,N-2})$ correspond to the
two ideals
\begin{gather*}
I_{1}=\< 2 \times 2 \mbox{ minors of flattenings of } A^{ij}, i,j \in
\set{1,2}\> \quad \mbox{and} \\
I_{2} = \< 2 \times 2 \mbox{ minors of
} B^{K}, K\in\textstyle \prod_{i=3}^N[d_i]\>.
\end{gather*}
In $I_{1}$ we take all flattenings of the $(N-2)$-way tensor $A^{ij}$
down to a matrix and compute the $2 \times 2$ minors of those
matrices.  With this notation we have $I_{\gl(K_{2,N-2})} = I_{1} +
I_{2}$.  The quadratic generators of $I_2$
are of the form
\begin{subequations}
\label{eq:K2N-quads}
\begin{equation}
\label{eq:K2N-quads-1}
\begin{bmatrix}
1 & 1 & K \\
2 & 2 & K
\end{bmatrix}
-
\begin{bmatrix}
1 & 2 & K \\
2 & 1 & K
\end{bmatrix},
\end{equation}
and up to symmetry the generators of $I_1$ are of the form
\begin{equation}
\label{eq:K2N-quads-2}
\begin{bmatrix}
i & j & K & L \\
i & j & K'& L' \\
\end{bmatrix}-
\begin{bmatrix}
i & j & K'& L \\
i & j & K & L' \\
\end{bmatrix},
\end{equation}
\end{subequations}
where $i,j,K,K',L,L'$ are arbitrary in their respective domains (here,
the symmetry says that we can permute the last $N-2$ columns).
\begin{thm}
\label{thm:Markov-basis-K2N}
A Markov basis of the toric ideal $I_{K_{2,N-2}}$ with $d_{1} = d_{2}
= 2$ consists of the quadratic generators~\eqref{eq:K2N-quads} of
$I_{\gl(K_{2,N-2})}$ and the quartic binomials
\begin{equation*}
B_{3;k_1,k_2}^{L_{11}L_{12}L_{21}L_{22}} :=
\begin{bmatrix}
1 & 1 & k_1 & L_{11} \\
1 & 2 & k_2 & L_{12} \\
2 & 1 & k_2 & L_{21} \\
2 & 2 & k_1 & L_{22}
\end{bmatrix}
-
\begin{bmatrix}
1 & 1 & k_2 & L_{11} \\
1 & 2 & k_1 & L_{12} \\
2 & 1 & k_1 & L_{21} \\
2 & 2 & k_2 & L_{22}
\end{bmatrix}
\end{equation*}
for all $k_1,k_2\in[d_3]$ and
$L_{11},L_{12},L_{21},L_{22}\in[d_4]\times\dots\times[d_n]$, and the
corresponding quartics $B_{a;k_1,k_2}^{L_{11}L_{12}L_{21}L_{22}}$ for
$a=4,\dots,N$, where the roles of the columns 3 and $a$ are exchanged
in the above equation.
\end{thm}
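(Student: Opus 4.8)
The plan is to prove that $\cB := \{$the quadrics \eqref{eq:K2N-quads} and the quartics $B_{a;k_1,k_2}^{L_{11}L_{12}L_{21}L_{22}}\}$ is a Markov basis of $I_{K_{2,N-2}}$, i.e.\ by Theorem~\ref{thm:fundamental-Markov-bases} that $I_{\cB} = I_{K_{2,N-2}}$. First I would record the easy inclusion $I_\cB \subseteq I_{K_{2,N-2}}$: every listed binomial is a pure difference lying in $\ker \phi_{K_{2,N-2}}^*$, which one checks by verifying that the exponent vectors of the two terms have the same $K_{2,N-2}$-marginals on every clique (the cliques of $K_{2,N-2}$ are the edges $\{1,i\}$ and $\{2,i\}$ for $i\ge 3$; a direct bookkeeping on each quadric and quartic). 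The content is the reverse inclusion, equivalently: any two tables $u,v\in\NN^\cX$ with the same $K_{2,N-2}$-marginals are connected by moves in $\pm\cB$.

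The natural approach is to exploit the structure of $K_{2,N-2}$ as a toric fiber product. Since $d_1=d_2=2$ and every node $\ge 3$ is adjacent only to nodes $1$ and $2$, the graph $K_{2,N-2}$ is built by repeatedly gluing copies of the $3$-chain $i\!-\!1\!-\!2$ (equivalently $K_{2,1}$) along the common edge $\{1,2\}$. Concretely $K_{2,N-2}$ is a codimension-zero toric fiber product of $K_{2,N-3}$ with $K_{2,1}$ over the $\{1,2\}$-marginal. I would invoke the toric-fiber-product machinery of \cite{TFP-II,sullivant07:_toric}: by \cite[Theorem~2.1]{sullivant07:_toric} (or its refinements in \cite{TFP-II}) a Markov basis of the product is obtained from lifts of Markov bases of the two factors together with a set of ``quadratic'' gluing moves supported on the shared margin. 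The factors $K_{2,1}$ and (inductively) $K_{2,N-3}$ have explicitly known Markov bases consisting of the quadrics and quartics of the stated form; the gluing moves are exactly the binomials \eqref{eq:K2N-quads-1} together with the interaction quartics $B$. So the inductive step is: assume $\cB_{N-1}$ as described is a Markov basis for $K_{2,N-3}$, lift each of its elements into the larger polynomial ring (a lift of a quadric of type \eqref{eq:K2N-quads-2} is again such a quadric with the new column repeated on both terms, and a lift of a quartic $B$ is again a quartic $B$ of the stated shape), add the new interaction quartics and the compatibility quadrics indexed by the new node, and check that the resulting set is precisely $\cB_N$.

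The main obstacle, and where the real work lies, is verifying that the toric-fiber-product hypotheses apply and that the resulting generating set collapses to exactly the claimed list — in particular that no ``higher'' moves (degree $\ge 6$) are needed. This amounts to showing that the relevant Markov bases are \emph{slow-varying} / \emph{compatibly-graded} in the sense of \cite{TFP-II} so that \cite[Theorem~5.4, Theorem~5.10]{TFP-II} apply with only quadratic corrections, and then a careful tableau bookkeeping that every lift and every gluing move is one of the binomials written in the theorem (and conversely every listed binomial arises this way). As an alternative, more self-contained route, one can argue directly with the tableau calculus: given $u,v$ with equal margins, first use the quadrics \eqref{eq:K2N-quads-2} (the conditional-independence moves within each slice $A^{ij}$) to normalize the $(i,j)$-slices, then use the quadrics \eqref{eq:K2N-quads-1} and the quartics $B$ to transport the remaining discrepancy — the quartics being exactly what is required because the obstruction to connectivity by quadrics alone is a two-colored cycle in the bipartite incidence structure on the shared $\{1,2\}$-marginal, and each such elementary cycle is resolved by one $B$-move. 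Checking termination of this exchange procedure, i.e.\ that a suitable nonnegative potential (for instance the number of entries where $u$ and $v$ disagree, or a $4ti2$-style norm) strictly decreases, is the technical heart; I expect this to parallel closely the tableau arguments already carried out for the $N$-cycle in Section~\ref{sec:binary-n-cycles}, so the same style of multi-step tableau diagrams with $\ast$/$+$ row-markers should suffice.
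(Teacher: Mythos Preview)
Your overall strategy---induction on $N$ via a toric fiber product decomposition, invoking the machinery of \cite{TFP-II}---is exactly the route the paper takes. However, there is a genuine error in your setup: $\{1,2\}$ is \emph{not} an edge of $K_{2,N-2}$ (vertices $1$ and $2$ lie in the same part of the bipartition), so the gluing of $K_{2,N-3}$ and $K_{2,1}$ along $\{1,2\}$ is a \emph{codimension-one} toric fiber product, not codimension-zero. Consequently \cite[Theorem~2.1]{sullivant07:_toric} does not apply directly, and your description of the lifts and gluing moves is not quite right. (Relatedly, $K_{2,1}$ is the path $1$--$3$--$2$, not $i$--$1$--$2$.)

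The paper handles this by a two-layer induction. First it verifies the slow-varying hypothesis (the quartics project to zero on the $\{1,2\}$-margin), so that \cite[Theorem~5.10]{TFP-II} reduces the problem to finding a Markov basis of the \emph{associated codimension-zero} product. That associated product is the \emph{graph model} (in the sense of Remark~\ref{r:graphModel}) of the graph $\tilde K_{2,N-1}$ obtained by adding the edge $\{1,2\}$; now $\{1,2\}$ \emph{is} a clique, and one runs a separate codimension-zero induction (base case $\tilde K_{2,1}=C_3$, inductive step via \cite[Theorem~5.4]{TFP-II}) to show that $\tilde\cB$---the quartics together with the quadrics~\eqref{eq:K2N-quads-2}---is a Markov basis there. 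Finally \cite[Theorem~5.10]{TFP-II} adds back the quadrics~\eqref{eq:K2N-quads-1} coming from gluing the quadratic Markov bases of the decomposable factors $K_{2,1}$ and $K_{2,N-2}$. Your sketch collapses these two layers into one, which is why the bookkeeping of ``which moves arise as lifts and which as gluing'' does not come out cleanly. Once you separate the graph-model induction from the graphical-model induction, the argument goes through as you outline.
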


\begin{proof}
The proof is by induction on $N$.  The base case $N = 4$
is~\cite[Cor.~2.2]{TFP-II}.  Suppose that
Theorem~\ref{thm:Markov-basis-K2N} holds for some $N$.  We show that
it also holds for $N+1$.  The graph $K_{2,N-1}$ is obtained by gluing
the graph $K_{2,N-2}$ and the graph $K_{2,1}$ at the first two
vertices.  This is a codimension-one toric fiber product, which is
slow-varying, since all quartic generators
$B_{a;k_1,k_2}^{L_{11}L_{12}L_{21}L_{22}}$ project to the zero
polynomial when just considering their indices associated to the first
two vertices, see~\cite[\S 5.3]{TFP-II}.

We first show that the set $\tilde\cB$ which consists of all quartics
of the form $B_{a;k_1,k_2}^{L_{11}L_{12}L_{21}L_{22}}$ and the
quadratic moves of the form~\eqref{eq:K2N-quads-2} is a Markov basis
of the associated codimension-zero toric fiber product, which is the
graph model of the graph $\tilde K_{2,N-1}$ with vertex set $[N+1]$
and edge set $\left\{(i,j) : i<j\le N+1, i\le 2\right\}$.  Again, this
can be proved by induction: The induction base $\tilde K_{2,1}=C_3$ is
discussed in Remark~\ref{r:graphModel}.
By~\cite[Theorem~5.4]{TFP-II}, a Markov basis of $I_{\tilde
K_{2,N-1}}$ consists of the quadrics generating $I_2$ and lifts of
elements of the Markov bases of $I_{\tilde K_{2,N-2}}$ and $I_{\tilde
K_{2,1}}$.  The lift of a quadratic generator of $I_{\tilde
K_{2,N-1}}$ is a quadratic generator of $I_2$.  The lift of a quartic
generator of $I_{\tilde K_{2,N-1}}$ or $I_{\tilde K_{2,1}}$ is of the
form $B_{a;k_1,k_2}^{L_{11}L_{12}L_{21}L_{22}}$.  This proves that
$\tilde\cB$ is a Markov basis of $I_{\tilde K_{2,N-1}}$.

By~\cite[Theorem~5.10]{TFP-II}, we can obtain a Markov basis of
$I_{K_{2,N-1}}$ from $\tilde\cB$ by adding additional quadrics of the
form~\eqref{eq:K2N-quads-2} and moves obtained by gluing elements from
the Markov bases of $I_{K_{2,1}}$ and $I_{K_{2,N-2}}$.  Since
$K_{2,1}$ is decomposable, the quadratic moves of the
form~\eqref{eq:K2N-quads-1} alone form a Markov basis of $I_{2,1}$ (no
quartics are needed).  These quadratic moves can only be glued with
the corresponding quadratic generators from $I_{K_{2,N-1}}$, and this
gluing procedure yields all quadratic moves of the
form~\eqref{eq:K2N-quads-1}.

To sum up, the quartic moves and the quadratic moves of the
form~\eqref{eq:K2N-quads-2} belong to the associated codimension-one
toric fiber product, and the quadratic moves of the
form~\eqref{eq:K2N-quads-1} arise iteratively from the quadratic
generators of $I_{K_{2,1}}$.
\end{proof}

Now we proceed to describe the other minimal primes of the
ideal~$I_{\gl(K_{2,N-2})}$.
\begin{lemma}
\label{lem:associated-primes}
Let $a,b\in\{3,\dots,N\}$, and let $C\subset[d_a]$ and $D\subset[d_b]$.
Then the ideal $P_{a,C,b,D}$ generated by $I_{\gl(K_{2,N-2})}$ and the variables
\begin{equation*}
\{p_{11K} : K_a\in C\} \cup \{p_{12K} : K_b\in D\}
\cup \{p_{21K} : K_b \notin D\} \cup \{p_{22K} : K_a\notin C\}.
\end{equation*}
is a prime ideal containing~$I_{\gl(K_{2,N-2})}$.
\end{lemma}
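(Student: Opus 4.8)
The inclusion $I_{\gl(K_{2,N-2})}\subseteq P_{a,C,b,D}$ holds by construction, so only primality needs proof. The plan is to pass to the quotient by the monomial ideal $M$ generated by the listed variables. Since $M$ is generated by variables, $\kk[p]/M$ is a polynomial ring in the remaining, \emph{surviving}, variables: the $p_{11K}$ with $K_a\notin C$, the $p_{12K}$ with $K_b\notin D$, the $p_{21K}$ with $K_b\in D$, and the $p_{22K}$ with $K_a\in C$. As $P_{a,C,b,D}=M+I_{\gl(K_{2,N-2})}$, it suffices to show that the image of $I_{\gl(K_{2,N-2})}=I_1+I_2$ in this polynomial ring is a prime ideal.

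First I would dispose of $I_2$: for each $K$ a short case distinction (on the four combinations of $K_a\in C$ or not and $K_b\in D$ or not) shows that both monomials $p_{11K}p_{22K}$ and $p_{12K}p_{21K}$ appearing in the quadric \eqref{eq:K2N-quads-1} are divisible by one of the listed variables, so every generator of $I_2$ lies in $M$ and maps to $0$. For $I_1$, the key observation is that membership of a variable $p_{ijK}$ in the listed set depends only on $K_a$ when $ij\in\{11,22\}$ and only on $K_b$ when $ij\in\{12,21\}$. Consequently, within any flattening of a fixed tensor $A^{ij}$ the listed variables form either a union of complete rows or a union of complete columns of the flattening matrix, according to whether the distinguished index ($a$ or $b$) lies on the row side or the column side of that flattening. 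Hence every $2\times2$ minor \eqref{eq:K2N-quads-2} of that flattening either has a full row or a full column among the listed variables --- so both of its terms lie in $M$ and it maps to $0$ --- or has all four entries surviving, in which case it maps to the corresponding $2\times2$ minor of the flattening of the ``sub-box'' tensor obtained from $A^{ij}$ by restricting its distinguished index to the appropriate sub-alphabet ($[d_a]\setminus C$ or $C$, resp.\ $[d_b]\setminus D$ or $D$) and leaving all other indices full.

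Therefore the image of $I_{\gl(K_{2,N-2})}$ in $\kk[p]/M$ equals $J_{11}+J_{12}+J_{21}+J_{22}$, where $J_{ij}$ is the ideal of $2\times2$ minors of all flattenings of a generic box-shaped tensor and involves only the surviving variables $p_{ijK}$. Since these four variable sets are pairwise disjoint, the quotient splits as a tensor product over $\kk$ of the four rings $\kk[\,p_{ijK}\text{ surviving}\,]/J_{ij}$. Each of these is the coordinate ring of a Segre variety --- the $2\times2$ minors of all flattenings of a generic tensor generate the (prime, toric) Segre ideal --- hence an affine semigroup ring; and a tensor product over $\kk$ of affine semigroup rings $\kk[S_{11}]\otimes_\kk\cdots\otimes_\kk\kk[S_{22}]$ is again an affine semigroup ring $\kk[S_{11}\times\cdots\times S_{22}]$, which is a domain for any ground field $\kk$. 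Thus $\kk[p]/P_{a,C,b,D}$ is a domain and $P_{a,C,b,D}$ is prime.

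I expect the only real difficulty to be bookkeeping in the minor computation: one must check, in each type of flattening, that the listed variables are exactly the rows (or columns) indexed by the distinguished coordinate, and that a surviving minor is genuinely a minor of the sub-box tensor rather than of some more intricate restriction. The single external ingredient is the classical fact that the $2\times2$ minors of all flattenings of a generic tensor cut out the Segre variety scheme-theoretically; should one prefer to avoid quoting it, the same conclusion follows by writing down the obvious monomial parametrization of $\kk[p]/P_{a,C,b,D}$ (each surviving $p_{ijK}$ going to the corresponding independence monomial, each listed variable to $0$) and verifying directly that its kernel is generated by the quadrics above.
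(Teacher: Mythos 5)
Your proof takes essentially the same approach as the paper's: quotient by the monomial part, observe that $I_2$ vanishes and that the surviving $I_1$ quadrics are exactly the $2\times 2$ minors of four sub-box tensors supported on disjoint sets of surviving variables (the paper calls these ``rank conditions on disjoint slices''), and conclude primality because the sum of a monomial prime and the four Segre (toric) ideals in disjoint variable sets remains prime over any field. You simply make explicit two points the paper leaves terse---the row/column bookkeeping showing which flattening minors survive modulo the monomial ideal, and the affine-semigroup-ring reason the tensor product of the four quotient domains is again a domain.
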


\begin{proof}
$P_{a,C,b,D}$ is prime since it is a sum of geometrically
prime ideals which are defined in disjoint sets of
variables.  This can be seen as follows: First, the variables in
$P_{a,C,b,D}$ generate a monomial prime ideal.  Second, all binomial
generators of $I_2$ are redundant modulo that ideal, i.e.~they are
implied by the variables in~$P_{a,C,b,D}$.  Third, let $f = p^u- p^v$
be a binomial generator of~$I_1$.  Then $p^u$ contains a variable
generating $P_{a,C,b,D}$ if and only if $p^v$ contains a variable
in~$P_{a,C,b,D}$.  The binomials in $I_1$ which are not implied by the
variables in $P_{a,C,b,D}$ correspond to rank conditions on disjoint
slices of the tensor $p$; hence they generate a binomial prime ideal
over any field.
\end{proof}

\begin{prop}
  \label{prop:minimal-primes-K2N}
  All minimal primes of $I_{\gl(K_{2,N-2})}$ except the toric
  component $I_{K_{2,N-2}}$ are of the form~$P_{a,C,b,D}$.
  Specifically:
  \begin{enumerate}
  \item If $N=4$, then the set of minimal primes consists of the toric
  component and all primes of the form $P_{a,C,a,D}$, where $a \in
  \{3, 4\}$, $\emptyset\neq C\neq[d_a]$, and $\emptyset\neq
  D\neq[d_a]$.
  \item If $N>4$, then the set of minimal primes consists of the toric
  component and all primes of the form $P_{a,C,b,D}$, where $a,b \in
  \{3, \ldots, N \}$, $\emptyset\neq C\neq[d_a]$, and $\emptyset\neq
  D\neq[d_b]$.
\end{enumerate}
\end{prop}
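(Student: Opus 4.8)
The plan is to follow the strategy of Proposition~\ref{prop:n-cycle-minprimes}. Lemma~\ref{lem:associated-primes} already tells us that each $P_{a,C,b,D}$ is a prime ideal containing $I_{\gl(K_{2,N-2})}$, and by the Hammersley--Clifford theorem (Section~\ref{sec:graphicalmodels}) the toric ideal $I_{K_{2,N-2}}$ is the toric component of $I_{\gl(K_{2,N-2})}$, hence a minimal prime, and the only one containing no variable. Since the minimal primes of an ideal correspond to the irreducible components of its variety, it suffices to establish two things. First, that $V(I_{\gl(K_{2,N-2})}) = V(I_{K_{2,N-2}}) \cup \bigcup V(P_{a,C,b,D})$, the union taken over all parameters with $\emptyset\neq C\neq[d_a]$ and $\emptyset\neq D\neq[d_b]$; this shows every minimal prime is among these ideals. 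Second, that among these ideals exactly the ones listed in (1), resp.\ (2), are minimal.

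The structural input I would use is an explicit description of $V(I_{\gl(K_{2,N-2})})$. The ideal $I_1$ is generated by the $2\times2$ minors of all flattenings of the slices $A^{ij}$, so $V(I_1)$ consists of the tensors $p$ for which each $A^{ij}$ is a rank-$\leq1$ tensor; in particular the support of $A^{ij}$ is a combinatorial box $S_{ij}=\prod_{s=3}^{N}T_s^{ij}$ and $A^{ij}$ is ``rank one'' on $S_{ij}$. The ideal $I_2$ adds $p_{11K}p_{22K}=p_{12K}p_{21K}$ for all $K$, and in particular on $V(I_{\gl(K_{2,N-2})})$ one gets $S_{11}\cap S_{22}=S_{12}\cap S_{21}$, since any $K$ in one intersection makes the product of the two diagonal entries of $B^K$ nonzero. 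Now take $p\in V(I_{\gl(K_{2,N-2})})\setminus V(I_{K_{2,N-2}})$. Since the quadrics~\eqref{eq:K2N-quads} generate $I_{\gl(K_{2,N-2})}$ and together with the quartics of Theorem~\ref{thm:Markov-basis-K2N} generate $I_{K_{2,N-2}}$, some quartic Markov generator $f=B_{c;k_1,k_2}^{L_{11}L_{12}L_{21}L_{22}}$ (for some column $c\in\{3,\dots,N\}$) does not vanish at $p$. The key step is to extract separating modes from this: one shows there is a mode $a$ with $T_a^{11}\cap T_a^{22}=\emptyset$ and a mode $b$ with $T_b^{12}\cap T_b^{21}=\emptyset$, arguing that otherwise $S_{11}\cap S_{22}\neq\emptyset$, whence choosing $K^0$ in it and using $p_{11K^0}p_{22K^0}=p_{12K^0}p_{21K^0}$ forces the mode-wise rank ratios to be constant, the rank-one data of the four slices glues into a product tensor $p_{ijK}=a^i_Kb^j_K$, and $p$ is toric --- a contradiction. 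Given $a,b$, choose $C$ with $T_a^{22}\subseteq C\subseteq[d_a]\minus T_a^{11}$ and $D$ with $T_b^{21}\subseteq D\subseteq[d_b]\minus T_b^{12}$; comparing with the monomial generators of $P_{a,C,b,D}$ in Lemma~\ref{lem:associated-primes} shows $p\in V(P_{a,C,b,D})$. The only delicate point is choosing $C,D$ \emph{valid}: the obstructions are the degenerate configurations in which some $A^{ij}$ vanishes identically or has full support in the relevant mode, and in each such case one checks, as in the argument above, that $p$ was in fact toric.

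For the minimality statement, $I_{K_{2,N-2}}$ is minimal because it contains no variable, whereas every $P_{a,C,b,D}$ does. A given $P_{a,C,b,D}$ fails to be minimal exactly when $I_{K_{2,N-2}}\subseteq P_{a,C,b,D}$, i.e.\ when every quartic Markov generator vanishes on $V(P_{a,C,b,D})$. From the explicit form of the quartics, $B_{c;k_1,k_2}^{L_{11}L_{12}L_{21}L_{22}}$ vanishes identically on $V(P_{a,C,b,D})$ precisely when $c\in\{a,b\}$ (then in each of $f^{+},f^{-}$ one factor lies among the monomial generators of $P_{a,C,b,D}$), so a witnessing quartic exists if and only if $\{3,\dots,N\}\minus\{a,b\}\neq\emptyset$. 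This holds for all valid parameters when $N>4$, and for $N=4$ only when $a=b$; when $N=4$ and $a\neq b$ one obtains $I_{K_{2,2}}\subseteq P_{a,C,b,D}$ and those ideals are redundant --- exactly the dichotomy in the statement. Finally, since each $P_{a',C',b',D'}$ is prime with all non-listed variables regular, any inclusion $P_{a,C,b,D}\subseteq P_{a',C',b',D'}$ forces the set of monomial generators of the first to lie in that of the second, and a short block-by-block comparison of the four blocks of generators then forces $(a,C,b,D)=(a',C',b',D')$; so the listed ideals are pairwise incomparable and all minimal.

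The main obstacle is the separating-modes argument inside the proof of the variety identity: turning ``$f(p)\neq0$'' together with the compatibility relations $p_{11K}p_{22K}=p_{12K}p_{21K}$ into the clean statement that modes $a,b$ exist, and carrying out the bookkeeping that shows a point for which valid parameters $C,D$ cannot be chosen must be toric. As in the $N$-cycle section, the tableau manipulations with the quadratic moves~\eqref{eq:K2N-quads} (an analogue of Lemma~\ref{l:toric-component-avoidance}) provide an alternative, more computational route to the same conclusions.
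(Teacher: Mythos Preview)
Your overall strategy is the paper's: a set-theoretic argument on $V(I_{\gl(K_{2,N-2})})$ using the rank-one structure of the slices $A^{ij}$, followed by a direct check of which $P_{a,C,b,D}$ are redundant. The minimality discussion (same number of variable generators, hence pairwise incomparability; a quartic $B_c$ survives modulo $P_{a,C,b,D}$ precisely when $c\notin\{a,b\}$) matches the paper's argument almost verbatim.

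The one substantive difference is how you handle the ``separating-modes'' step. The paper isolates this as Lemma~\ref{lemma:johannes}, proving by a tableau calculation that $I_{\gl(K_{2,N-2})}:p_{11K}p_{22K}=I_{K_{2,N-2}}$; it follows immediately that a non-toric point has $p_{11K}p_{22K}=0$ (and $p_{12K}p_{21K}=0$) for every $K$, whence $S_{11}\cap S_{22}=\emptyset$ and some mode $a$ has $T_a^{11}\cap T_a^{22}=\emptyset$. Your geometric alternative (choose $K^0\in S_{11}\cap S_{22}$, use the rank-one factorizations $A^{ij}_K=\prod_s (v^{ij}_s)_{K_s}$ and the relation at $K^0$ to deduce the mode-wise ratios $(v^{11}_s)_k(v^{22}_s)_k / (v^{12}_s)_k(v^{21}_s)_k$ are constant in $k$, hence every quartic $B_c$ vanishes) does go through, but be careful: the intermediate factorization you write, $p_{ijK}=a^i_K b^j_K$, is not the right conclusion---that only encodes $\rank B^K\le 1$, i.e.\ $I_2$, not $I_{K_{2,N-2}}$. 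What the gluing actually yields is constancy of the mode-wise ratios, and from that one checks directly that each quartic Markov generator vanishes. Either route works; the paper's colon-ideal computation is cleaner and is reused later in Theorem~\ref{thm:decomposition-K2N}.
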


The proof of Proposition~\ref{prop:minimal-primes-K2N} 
makes use of the following lemma.
\begin{lemma}\label{lemma:johannes}
For any $K\in\prod_{i=3}^n[d_i]$, 
\[I_{\gl(K_{2,N-2})} : p_{11K}p_{22K} = I_{\gl(K_{2,N-2})} : p_{12K}p_{21K} = 
I_{K_{2,N-2}}.\]
In particular, if $P$ is a minimal prime of $I_{\gl(K_{2,N-2})}$ and
not the toric component $I_{K_{2,N-2}}$, then $p_{11K}p_{22K} \in P$
and $p_{12K}p_{21K} \in P$.
\end{lemma}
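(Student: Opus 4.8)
The plan is to prove the displayed equality of ideals first, and then deduce the ``in particular'' clause formally. Two of the needed facts are immediate. The inclusion $I_{\gl(K_{2,N-2})}:p_{11K}p_{22K}\subseteq I_{K_{2,N-2}}$ holds because $I_{\gl(K_{2,N-2})}\subseteq I_{K_{2,N-2}}$, the latter is prime, and $p_{11K}p_{22K}\notin I_{K_{2,N-2}}$ since a toric ideal contains no monomial. Also $I_{\gl(K_{2,N-2})}:p_{11K}p_{22K}=I_{\gl(K_{2,N-2})}:p_{12K}p_{21K}$, because $p_{11K}p_{22K}-p_{12K}p_{21K}$ is one of the quadratic generators~\eqref{eq:K2N-quads-1} of $I_{\gl(K_{2,N-2})}$ and, in general, $I:f=I:g$ whenever $f-g\in I$. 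So the whole lemma reduces to the single inclusion $I_{K_{2,N-2}}\subseteq I_{\gl(K_{2,N-2})}:p_{11K}p_{22K}$, i.e.\ $p_{11K}p_{22K}\cdot I_{K_{2,N-2}}\subseteq I_{\gl(K_{2,N-2})}$.

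To prove that inclusion, I would use that $I_{K_{2,N-2}}$ is generated by the Markov basis of Theorem~\ref{thm:Markov-basis-K2N}; since multiplying by the fixed monomial $p_{11K}p_{22K}$ respects $R$-linear combinations, it suffices to check $p_{11K}p_{22K}\cdot g\in I_{\gl(K_{2,N-2})}$ for each Markov basis generator $g$. For the quadratic generators~\eqref{eq:K2N-quads} this is trivial, as they already lie in $I_{\gl(K_{2,N-2})}$. The only real content is the quartics: writing $K=(k_0,M)$ with $k_0\in[d_3]$ and fixing a generator $B:=B_{3;k_1,k_2}^{L_{11}L_{12}L_{21}L_{22}}$ (the case of general $a$ follows by the symmetry permuting columns $3,\dots,N$), by Proposition~\ref{prop:DES-criterion} the claim is that the two monomials $p_{11K}p_{22K}\cdot B^{+}$ and $p_{11K}p_{22K}\cdot B^{-}$ are connected by the quadratic moves~\eqref{eq:K2N-quads}. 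The hard part, and really the heart of the lemma, is this explicit tableau computation. The idea is that the helper pair $p_{11K}p_{22K}$ is a $(1,1)/(2,2)$ pair sharing all last $N-2$ coordinates, so a move of type~\eqref{eq:K2N-quads-1} converts it to a $(1,2)/(2,1)$ pair with the same coordinates; alternating between these two shapes, one applies moves of type~\eqref{eq:K2N-quads-2} inside the slices $A^{11},A^{22}$ (respectively $A^{12},A^{21}$) to exchange, between a helper row and the matching quartic row, the $X_3$-value $k_1$ held in the quartic rows $1,4$ (respectively $k_2$ in rows $2,3$) with the value $k_0$ held in the helper rows. A chain of eight such moves carries $p_{11K}p_{22K}\cdot B^{+}$ to $p_{11K}p_{22K}\cdot B^{-}$; if $k_1=k_2$ the quartic vanishes and there is nothing to prove.

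Finally, for the ``in particular'' clause, let $P$ be a minimal prime of $I_{\gl(K_{2,N-2})}$ with $P\neq I_{K_{2,N-2}}$. If $p_{11K}p_{22K}\notin P$, then for every $g\in I_{K_{2,N-2}}=I_{\gl(K_{2,N-2})}:p_{11K}p_{22K}$ one has $g\,p_{11K}p_{22K}\in I_{\gl(K_{2,N-2})}\subseteq P$, hence $g\in P$ by primeness; thus $I_{\gl(K_{2,N-2})}\subseteq I_{K_{2,N-2}}\subseteq P$, and minimality of $P$ forces $P=I_{K_{2,N-2}}$, a contradiction. So $p_{11K}p_{22K}\in P$, and then $p_{12K}p_{21K}\in P$ too, since $p_{11K}p_{22K}-p_{12K}p_{21K}\in I_{\gl(K_{2,N-2})}\subseteq P$. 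Thus the only step requiring genuine work is the eight-move tableau chain; everything surrounding it is formal manipulation of colon ideals and minimal primes.
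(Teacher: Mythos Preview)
Your proposal is correct and follows essentially the same approach as the paper: both reduce to showing $p_{11K}p_{22K}\cdot B_{3;k_1,k_2}^{L_{11}L_{12}L_{21}L_{22}}\in I_{\gl(K_{2,N-2})}$ and carry this out with the same eight-move tableau chain (two swaps in the $A^{11},A^{22}$ slices, one type~\eqref{eq:K2N-quads-1} flip of the helper pair, two swaps in the $A^{12},A^{21}$ slices, flip back, two final swaps). You are more explicit than the paper about the surrounding formalities: the paper neither spells out the easy inclusion $I_{\gl}:p_{11K}p_{22K}\subseteq I_{K_{2,N-2}}$ nor the derivation of the ``in particular'' clause, and it handles the $p_{12K}p_{21K}$ case by a separate symmetry remark rather than your cleaner observation that $I:f=I:g$ whenever $f-g\in I$.
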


\begin{proof}
We need to prove that both
$p_{11kL}p_{22kL}B_{a;i_1i_2}^{L_{11}L_{12}L_{21}L_{22}}$ and
$p_{12kL}p_{21kL}B_{a;i_1i_2}^{L_{11}L_{12}L_{21}L_{22}}$ belong to
$I_{\gl(K_{2,N-2})}$, and by symmetry it suffices to treat the first
binomial.  Moreover, by symmetry we may assume $a=3$.
The calculation can be done using tableau notation:
\begin{multline*}
\begin{bmatrix}
1&1&k_1&L_{11}\\
1&2&k_2&L_{12}\\
2&1&k_2&L_{21}\\
2&2&k_1&L_{22}\\
1&1&k&L\\
2&2&k&L
\end{bmatrix}
\begin{matrix}
\ast \\ + \\ \\ \\ \ast \\ +
\end{matrix}
\longrightarrow
\begin{bmatrix}
1&1& k &L_{11}\\
1&2&k_2&L_{12}\\
2&1&k_2&L_{21}\\
2&2& k &L_{22}\\
1&1&k_1&L\\
2&2&k_1&L
\end{bmatrix}
\begin{matrix}
\phantom{\ast}\\ \\ \\ \\ \ast \\ \ast
\end{matrix}
\longrightarrow
\begin{bmatrix}
1&1& k &L_{11}\\
1&2&k_2&L_{12}\\
2&1&k_2&L_{21}\\
2&2& k &L_{22}\\
1&2&k_1&L\\
2&1&k_1&L
\end{bmatrix}
\begin{matrix}
\phantom{\ast}\\ \ast \\ + \\ \\ \ast \\ +
\end{matrix}
\displaybreak[0]\\
\longrightarrow
\begin{bmatrix}
1&1& k &L_{11}\\
1&2&k_1&L_{12}\\
2&1&k_1&L_{21}\\
2&2& k &L_{22}\\
1&2&k_2&L\\
2&1&k_2&L
\end{bmatrix}
\begin{matrix}
\phantom{\ast}\\ \\ \\ \\ \ast \\ \ast
\end{matrix}
\longrightarrow
\begin{bmatrix}
1&1& k &L_{11}\\
1&2&k_1&L_{12}\\
2&1&k_1&L_{21}\\
2&2& k &L_{22}\\
1&1&k_2&L\\
2&2&k_2&L
\end{bmatrix}
\begin{matrix}
\ast \\ \\ \\ + \\ \ast \\ +
\end{matrix}
\longrightarrow
\begin{bmatrix}
1&1&k_2&L_{11}\\
1&2&k_1&L_{12}\\
2&1&k_1&L_{21}\\
2&2&k_2&L_{22}\\
1&1& k &L\\
2&2& k &L
\end{bmatrix}.
\end{multline*}
Here, the first tableau and the last tableau correspond to the two monomials
of $p_{11kL}p_{22kL}B_{3;k_1k_2}^{L_{11}L_{12}L_{21}L_{22}}$.
\end{proof}

\begin{proof}[Proof of Proposition~\ref{prop:minimal-primes-K2N}]
We use a set-theoretic argument.  Let
$p$ 
be any point in the variety of~$I_{\gl(K_{2,N-2})}$, and consider the
$(d_3\times\dots\times d_N)$-tensors $A^{ij}$ with $A^{ij}_{K} =
p_{ijK}$.  If no coordinate of $p$ vanishes, then $p$ is contained in
the variety of~$I_{K_{2,N-2}}$.  Therefore, suppose $p_{ijK} = 0$ for
some $ijK\in\prod_{i=1}^N[d_i]$.
The CI statements $\gl(K_{2,N-2})$ imply that all $A^{ij}$ have rank
one.
Hence there must be an index $a$ such that $p_{ijK'}=0$ whenever
$K'\in\prod_{i=3}^N[d_i]$ satisfies $K'_a=K_a$.
In other words, for all $i,j\in\{1,2\}$, the pattern of zeros within
$A^{ij}$ is a union of $(N-3)$-dimensional slices.

For each $a=3,\dots,N$ let $E_a^{ij}$ be the largest subset of $[d_a]$ such
that $p_{ijK}=0$ whenever $K_a\in E_a^{ij}$.  Then $A^{ij}_K\neq 0$ if and only if
$K\in([d_3] \setminus E_3^{ij}) \times\dots\times ([d_N] \setminus E_N^{ij})$.  By
Lemma~\ref{lemma:johannes}, if 
$p$ does not lie in the toric component, then 
$p_{\ol{ij}K}=A^{\ol{ij}}_K=0$ for all $K\in([d_3] \setminus E_3^{ij})
\times\dots\times ([d_N] \setminus E_N^{ij})$ (remember that $\ol{ij}$
denotes the ``opposite'' string to $ij$, obtained by exchanging
$0\leftrightarrow1$ in each position).  Again, each of these entries
must be contained in an $(N-3)$-slice of zeros.  Hence there must be
an index $a_{ij}$ such that $[d_{a_{ij}}]\setminus E_{a_{ij}}^{ij}$ is
a subset of $E_{a_{ij}}^{\ol i\ol j}$; for otherwise, if for each $a$
there exists $i_a\in([d_a]\setminus E_a^{\ol i\ol j})\cap
([d_a]\setminus E_a^{ij})$, then $p_{ijI} p_{\ol i\ol jI}\neq 0$,
where $I_a = i_a$ for all~$a$.  This implies that we can
find 
subsets $C\subseteq[d_{a_{11}}]$, $D\subseteq[d_{a_{12}}]$, such that
$p\in V(P_{a_{11},C,a_{12},D})$.  This shows the first statement, and
it remains to see that certain choices of $P_{a,C,b,D}$ do not appear.

If $C=\emptyset$, then $P_{a,C,b,D}$ contains the toric component:
Indeed, $P_{a,\emptyset,b,D}$ contains all monomials of the form
$p_{22K}$, and hence $P_{a,\emptyset,b,D}$ contains all quartics.
Therefore, $P_{a,\emptyset,b,D}$ is not a minimal prime, and the same
is true if $C=[d_a]$, $D=\emptyset$ or $D=[d_b]$.  Similarly, if
$N=4$, then $P_{3,C,4,D}$ contains
both monomials of any quartic.

It follows that all minimal primes are among the ideals $P_{a,C,b,D}$
listed in the statement of the theorem.  It remains to show that all
these ideals are indeed minimal primes.  Note that each of these
ideals 
contains a different set of variables of
the same size, so they do not contain each other.  Furthermore, they
each leave out at least one of the quartic moves. 
Indeed, choose an index $c\in\{3,\dots,N\}\setminus\{a,b\}$, 
choose $k_1,k_2\in[d_c]$, and choose
$L_{11},L_{12},L_{21},L_{22}\in\prod_{i\ge 3, i\neq c}[d_i]$ such
that $(L_{11})_a\notin C, (L_{22})_a\in C, (L_{12})_b \notin D,
(L_{21})_b \in D$.  Then $P_{a,C,b,D}$ does not contain 
$B_{a;k_1k_2}^{L_{11}L_{12}L_{21}L_{22}}$, and so $I_{K_{2,N-2}}\not\subseteq P_{a,C,b,D}$.
\end{proof}

\begin{thm}
\label{thm:decomposition-K2N}
The global Markov ideal $I_{\gl(K_{2,N-2})}$ is a radical ideal
when $d_{1} = d_{2} = 2$, with irredundant prime decomposition
\[
I_{\gl(K_{2,N-2})}  =  I_{K_{2,N-2}} \cap  \bigcap P_{a,C,b,D}
\]
with the intersection running over all $a,b \in \{3, \ldots, N\}$,
$ C \subset [d_{a}]$, $D \subset [d_{b}]$, $C,D \neq \emptyset$.
When $N = 4$, we also require $a = b$.
\end{thm}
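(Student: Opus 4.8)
The plan is to follow the blueprint of the proof of Theorem~\ref{t:ncycle-primedec}. Write $J := I_{K_{2,N-2}} \cap \bigcap P_{a,C,b,D}$ for the claimed intersection. By Proposition~\ref{prop:minimal-primes-K2N}, $J$ is precisely the intersection of all the minimal primes of $I_{\gl(K_{2,N-2})}$, so $J = \sqrt{I_{\gl(K_{2,N-2})}}$; by \cite[Theorem~3.1]{eisenbud96:_binom_ideal} this radical is a binomial ideal, and since $J \subseteq I_{K_{2,N-2}}$ contains no monomial, $J$ is generated by pure differences. The primes listed in the statement are pairwise distinct and minimal (Proposition~\ref{prop:minimal-primes-K2N}), which gives irredundancy, so the whole theorem reduces to the inclusion $J \subseteq I_{\gl(K_{2,N-2})}$, i.e.\ to showing that every pure-difference binomial $p^u - p^v \in J$ already lies in $I_{\gl(K_{2,N-2})}$.

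Fix such a binomial. Since $p^u - p^v \in J \subseteq I_{K_{2,N-2}}$ and the quadratic moves \eqref{eq:K2N-quads} together with the quartics $B_{a;k_1,k_2}^{L_{11}L_{12}L_{21}L_{22}}$ form a Markov basis (Theorem~\ref{thm:Markov-basis-K2N}), Proposition~\ref{prop:DES-criterion} gives a lattice path $v = u_0, u_1, \dots, u_r = u$ whose consecutive differences are among these moves. I would induct on the number of quartic steps. If there are none, then $p^u - p^v$ is a sum of monomial multiples of the quadratic moves \eqref{eq:K2N-quads}, which generate $I_1 + I_2 = I_{\gl(K_{2,N-2})}$, and we are done. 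Otherwise let $\ell$ be the first index with $u_\ell - u_{\ell-1} = \pm(f^+ - f^-)$ a quartic, and write $p^{u_\ell} - p^{u_{\ell-1}} = \pm p^w(f^+ - f^-)$ with $p^{u_{\ell-1}} = p^w p^{f^-}$. Once one shows $p^{u_\ell} - p^{u_{\ell-1}} \in I_{\gl(K_{2,N-2})}$, this binomial again connects $u_{\ell-1}$ and $u_\ell$ by quadratic moves (Proposition~\ref{prop:DES-criterion}), so splicing in that detour produces a path with strictly fewer quartic steps and the induction closes.

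The crux is therefore the reduction $p^w(f^+ - f^-) \in I_{\gl(K_{2,N-2})}$, for which Lemma~\ref{lemma:johannes} is the workhorse, playing the role that Lemma~\ref{l:toric-component-avoidance} plays in the cycle case: since $f \in I_{K_{2,N-2}}$, that lemma yields $p_{11K}p_{22K}\,f \in I_{\gl(K_{2,N-2})}$ and $p_{12K}p_{21K}\,f \in I_{\gl(K_{2,N-2})}$ for every multi-index $K$, so it is enough to exhibit one $K$ for which $p^w$ is divisible by $p_{11K}p_{22K}$ or by $p_{12K}p_{21K}$. To find such a $K$, note that $p^v - p^{u_{\ell-1}} \in I_{\gl(K_{2,N-2})} \subseteq J$ (the first $\ell-1$ steps are quadratic), so $p^u - p^{u_{\ell-1}} \in J$ and hence $p^u \equiv p^{u_{\ell-1}} = p^w p^{f^-}$ modulo every admissible prime $P_{a,C,b,D}$. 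The plan is to exploit these congruences --- choosing $(a,C,b,D)$ adapted to $f$, and recalling from Lemma~\ref{lem:associated-primes} exactly which monomial patterns generate $P_{a,C,b,D}$ --- to force a conjugate pair $p_{11K}p_{22K}$ or $p_{12K}p_{21K}$ to appear in $p^w$, treating $N=4$ and $N>4$ separately since the admissible parameter sets differ (Proposition~\ref{prop:minimal-primes-K2N}). I expect this last combinatorial step to be the main obstacle: unlike the monomial primes $P_f$ of the cycle, the $P_{a,C,b,D}$ are not monomial ideals, so one cannot simply say that a difference of distinct monomials lying in a monomial prime has both monomials in that prime; instead one must track the slices of vanishing coordinates forced by simultaneous membership in several of the $P_{a,C,b,D}$, as in the proof of Proposition~\ref{prop:minimal-primes-K2N}, and combine this with the explicit shape of the quartic $f$.
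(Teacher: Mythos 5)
Your overall skeleton is right and coincides with the paper's: reduce to the inclusion $J \subseteq I_{\gl(K_{2,N-2})}$, observe by the Eisenbud--Sturmfels theorem that $J$ is binomial so it suffices to handle pure-difference binomials $p^u - p^v$, decompose such a binomial along a Markov path, and try to eliminate quartic steps. The paper also reduces quadratic steps away exactly as you do. Where your plan departs from the paper is in the crux, and there is a genuine gap there.

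You propose to dispose of a quartic step $p^w(f^+ - f^-)$ by invoking Lemma~\ref{lemma:johannes}, which requires exhibiting a \emph{full} conjugate pair $p_{11K}p_{22K}$ or $p_{12K}p_{21K}$ (same multi-index $K$ in both factors) dividing $p^w$. That is a much stronger demand than the hypothesis actually furnishes, and you correctly flag it as ``the main obstacle'' --- but the paper does not resolve the obstacle, it sidesteps it. The paper's proof never produces a full conjugate pair. Instead it runs a double induction on the number of mismatches between $L_{11},L_{22}$ and between $L_{12},L_{21}$ in the quartic $B_{3;k_1,k_2}^{L_{11}L_{12}L_{21}L_{22}}$. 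The base case $L_{11}=L_{22}$ (or $L_{12}=L_{21}$) is handled by an explicit tableau computation showing the quartic itself is a combination of quadratics. For the induction step the paper only needs to extract from $p^u$ (not $p^w$) first a \emph{single} extra variable $p_{ijkL}$, and later a ``partial'' conjugate pair of the form $p_{11k_3L_5}p_{22k_4L_6}$ (or $p_{12}\cdot p_{21}$) in which only \emph{one} coordinate of $L_5$ and $L_6$ is required to agree, namely $(L_5)_a=(L_6)_a$. Such a partial pair is what the congruences modulo a cleverly chosen $P_{a,C,b,D}$ actually guarantee, because the defining variables of $P_{a,C,b,D}$ constrain each slot $ij$ only through the single coordinates $K_a$ or $K_b$. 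Demanding a full conjugate pair $p_{11K}p_{22K}$ with the same $K$ in all $N-2$ positions is not implied by membership in the primes $P_{a,C,b,D}$, and without it Lemma~\ref{lemma:johannes} cannot be applied. So your reduction step, as stated, would fail; you would need to replace it with the gradual mismatch-reduction argument (or some equivalent) to close the proof.
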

\begin{proof}
Let $J$ be the intersection of the toric component with all minimal
primes~$P_{a,C,b,D}$.  
Lemma~\ref{lem:associated-primes} shows $I\subseteq J$, and it remains
to show the opposite inclusion.
It suffices to consider binomials: By
Proposition~\ref{prop:minimal-primes-K2N} the radical of $I$ equals $J$,
and therefore $J$ is generated by
binomials~\cite[Theorem~3.1]{eisenbud96:_binom_ideal}.

Let $p^u-p^v\in J$.  If there exists a prime $P_{a,C,b,D}$ such that
$p^u$ does not contain any of the variables defining $P_{a,C,b,D}$, then
$p^u-p^v$ actually belongs to the ideal generated by the binomial part
of $P_{a,C,b,D}$, and hence $p^u-p^v\in I$.  Therefore, we may assume in
the following that for any prime $P_{a,C,b,D}$ the monomial $p^u$
contains at least one of the variables defining~$P_{a,C,b,D}$.
Since 
$p^u-p^v\in I_{K_{2,N-2}}$ there is a decomposition
$p^u - p^v = \sum_{i=1}^r (p^{u_{i-1}} - p^{u_i})$,
where $u_0=u$, $u_r=v$, and $u_{i-1}-u_i$ is an element of the Markov
basis.  If $u_0-u_1$ is a quadratic element of the Markov basis, then
$p^u-p^v-p^{u}+p^{u_1}$ is an element of $J$ and belongs to $I$ if and
only if $p^u-p^v$ belongs to $I$ (since $p^u-p^{u_1}$ is contained in
$I$ as well as in each minimal prime).

Assume that $u_0-u_1$ corresponds to a quartic move, say
$B_{3;k_1k_2}^{L_{11}L_{12}L_{21}L_{22}}$ for some $k_1,k_2\in[c]$ and
$L_{11},L_{12},L_{21},L_{22}\in[d]$.
We use induction on the number of mismatches of $L_{11}$
and $L_{22}$ and the number of mismatches of $L_{12}$ and $L_{21}$ to show that
we can replace this quartic with a combination of quadratic Markov
moves.  This shows that $p^u-p^{u_1}$ actually lies in~$I$.
If $L_{11}=L_{22}$, then the calculation
\begin{multline*}
\begin{bmatrix}
1&1&k_1&L_{11}\\
1&2&k_2&L_{12}\\
2&1&k_2&L_{21}\\
2&2&k_1&L_{11}
\end{bmatrix}
\begin{matrix}
\ast \\ \\ \\ \ast
\end{matrix}
\longrightarrow
\begin{bmatrix}
1&2&k_1&L_{11}\\
1&2&k_2&L_{12}\\
2&1&k_2&L_{21}\\
2&1&k_1&L_{11}
\end{bmatrix}
\begin{matrix}
\ast \\ \ast \\ + \\ +
\end{matrix}
\displaybreak[0]\\
\longrightarrow
\begin{bmatrix}
1&2&k_2&L_{11}\\
1&2&k_1&L_{12}\\
2&1&k_1&L_{21}\\
2&1&k_2&L_{11}
\end{bmatrix}
\begin{matrix}
\ast \\ \\ \\ {\ast}
\end{matrix}
\longrightarrow
\begin{bmatrix}
1&1&k_2&L_{11}\\
1&2&k_1&L_{12}\\
2&1&k_1&L_{21}\\
2&2&k_2&L_{11}
\end{bmatrix}
\end{multline*}
shows that $B_{3;k_1k_2}^{L_{11}L_{12}L_{21}L_{11}}$ is a combination of quadratic
Markov moves and hence lies in~$I$.  By symmetry, the same is true
when $L_{12}=L_{21}$.  Therefore, we may assume $L_{11}\neq L_{22}$ and $L_{12}\neq
L_{21}$ in the following.

As shown above, there exists a variable $p_{ijkL}$
that divides $p^u$ and $p^{u_1}$ (in particular, $p_{ijkL}$ is not
involved in $B_{3;k_1k_2}^{L_{11}L_{12}L_{21}L_{22}}$).  Without loss of
generality assume $i=j=2$.  If there exists $a\ge 2$ such that
$L_a=(L_{11})_a\neq(L_{22})_a$, then we can apply the moves
\begin{gather*}
\begin{bmatrix}
2 & 2 & k_1 & L_{22} \\
2 & 2 & k & L
\end{bmatrix}
-
\begin{bmatrix}
2 & 2 & k_1 & L_{22}' \\
2 & 2 & k & L'
\end{bmatrix},
\text{ and }
\begin{bmatrix}
2 & 2 & k_2 & L_{22} \\
2 & 2 & k & L
\end{bmatrix}
-
\begin{bmatrix}
2 & 2 & k_2 & L_{22}' \\
2 & 2 & k & L'
\end{bmatrix},
\\ \text{with }
(L_{22}')_b=
\begin{cases}
(L_{22})_b & \text{ if }b\neq a,\\
L_a & \text{ if }b = a,\\
\end{cases}
\quad \text{and} \quad 
L'_b=
\begin{cases}
L_b & \text{ if }b\neq a,\\
(L_{22})_a & \text{ if }b = a,\\
\end{cases}
\end{gather*}
to $p^u$ and~$p^{u_1}$.  This effectively replaces $L_{22}$ by $L_{22}'$,
and $L_{11}$ and $L_{22}'$ agree in more components than $L_{11}$ and $L_{22}$.

By symmetry, if there exist $i,j\in\{0,1\}$ and $a\ge2$ with
$L=L_{ij})_a\neq (L_{\ol{ij}})_a$, then we can apply quadratic moves
to make $L_{ij}$ and $L_{\ol{ij}}$ more similar to each other.  Now we
may assume that each variable $p_{ijkL}$ that divides $p^u$ satisfies
$L_a \neq (L_{\ol i\ol j})_a$.  We show that it is still possible
for some $i,j$ to reduce the number of mismatches between $L_{ij}$ and
$L_{\ol{ij}}$.

Choose indices $a$ and $b$ such that $(L_{11})_a\neq(L_{22})_a$ and
$(L_{12})_b\neq(L_{21})_b$.
We claim that in this case, there exist
$k_3,k_4\in[d_3]$ and $L_5,L_6\in\prod_{i=4}^n[d_i]$ such that
\begin{itemize}
\item either $(L_5)_a=(L_6)_a$ and $p_{11k_3L_5}p_{22k_4L_6}$ divides
$p^u$,
\item or $(L_5)_b=(L_6)_b$ and $p_{12k_3L_5}p_{21k_4L_6}$
divides~$p^u$.
\end{itemize}
Otherwise, $p^u$ would contain no defining variable of the prime
$P_{a,C,b,D}$ with
\begin{align*}
C&=\{l\in[d_a]: p_{11kL}\text{ does not divide }p^u\text{ for all }k\in[d_1], L\in\prod_{s=2}^n[d_s]\text{ with }L_a=l\}, \\
D&=\{l\in[d_b]: p_{12kL}\text{ does not divide }p^u\text{ for all }k\in[d_1], L\in\prod_{s=2}^n[d_s]\text{ with }L_b=l\}
\end{align*}
(note that if $N=4$, then $a=b$).  By symmetry it suffices to consider
the first case, i.e.~$(L_5)_a=(L_6)_a$ and $p_{11k_3L_5}p_{22k_4L_6}$
divides~$p^u$.
We can then apply the moves
\begin{gather*}
\begin{bmatrix}
1&1&k_1&L_{11}\\
1&1&k_3&L_5
\end{bmatrix}
-
\begin{bmatrix}
1&1&k_1&L_{11}'\\
1&1&k_3&L_5'
\end{bmatrix}
\text{ and }
\begin{bmatrix}
2&2&k_1&L_{22}\\
2&2&k_3&L_6
\end{bmatrix}
-
\begin{bmatrix}
2&2&k_1&L_{22}'\\
2&2&k_3&L_6'
\end{bmatrix}
\displaybreak[0]\\
\text{with }
(L_{11}')_c=
\begin{cases}
(L_{11})_c & \text{ if }c\neq a,\\
(L_5)_a & \text{ if }c = a,
\end{cases}
(L_5)'_c=
\begin{cases}
(L_5)_c & \text{ if }c\neq a,\\
(L_{22})_a & \text{ if }c = a,
\end{cases}
\displaybreak[0]\\
\text{and }
(L_{22}')_c=
\begin{cases}
(L_{22})_c & \text{ if }c\neq a,\\
(L_6)_a & \text{ if }c = a,
\end{cases}
(L_6)'_c=
\begin{cases}
(L_6)_c & \text{ if }c\neq a,\\
(L_{22})_a & \text{ if }c = a,\\
\end{cases}
\end{gather*}
to $p^u$ and $p^{u_1}$.  This effectively replaces $L_{11}$ by
$L_{11}'$ and $L_{22}$ by $L_{22}'$, and $L_{11}'$ and $L_{22}'$ agree
in more components than $L_{11}$ and $L_{22}$.  This proves the induction step and
shows that $p^u - p^v$ lies in~$I$.
\end{proof}

With this primary decomposition, we can analyze the 
positive margins property.

\begin{thm}
\label{thm:K22-pos-marg}
For $N\ge 4$, the complete bipartite graph $K_{2,N-2}$, where the
first group of nodes is binary, has the positive margins property if
and only if $N=4$.
\end{thm}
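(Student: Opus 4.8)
The plan is to feed the prime decomposition of $I_{\gl(K_{2,N-2})}$ from Theorem~\ref{thm:decomposition-K2N} into Lemma~\ref{lem:graphical-positive-margins}: the positive margins property holds if and only if, for every non-toric minimal prime $P_{a,C,b,D}$, the $G$-margins of the exponent vector $u_{P_{a,C,b,D}}$ fail to be strictly positive. Reading off the generators of $P_{a,C,b,D}$ from Lemma~\ref{lem:associated-primes}, and using that this ideal is prime, so that $m_{P_{a,C,b,D}}=\hat m_{P_{a,C,b,D}}$ is the product of the variables \emph{not} in it, one finds that $u:=u_{P_{a,C,b,D}}$ is the $0$--$1$ indicator of the set of triples $(i,j,K)$ with $(i,j)=(1,1)$ and $K_a\notin C$, or $(i,j)=(1,2)$ and $K_b\notin D$, or $(i,j)=(2,1)$ and $K_b\in D$, or $(i,j)=(2,2)$ and $K_a\in C$. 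For $N\ge 4$ the maximal cliques of $K_{2,N-2}$ are exactly the edges $\{1,s\}$ and $\{2,s\}$, $s\in\{3,\dots,N\}$, so the $G$-margins are the collection of $\{1,s\}$- and $\{2,s\}$-marginals of $u$ (the vertex- and empty-margins are redundant non-negative sums of these).

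I would first dispose of the primes with $a=b$, which by Proposition~\ref{prop:minimal-primes-K2N} are the only ones occurring when $N=4$. A short count shows that the $(1,k)$-entry of the $\{1,a\}$-marginal of $u$ vanishes exactly when $k\in C\cap D$, its $(2,k)$-entry vanishes exactly when $k\in[d_a]\setminus(C\cup D)$, the $(1,k)$-entry of the $\{2,a\}$-marginal vanishes exactly when $k\in C\setminus D$, and its $(2,k)$-entry vanishes exactly when $k\in D\setminus C$. These four sets partition the nonempty set $[d_a]$, so at least one of these entries is zero; hence every prime $P_{a,C,a,D}$ satisfies the condition of Lemma~\ref{lem:graphical-positive-margins}, which already settles the case $N=4$.

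Next I would treat the primes with $a\neq b$, which exist precisely when $N\ge 5$, and show that their $G$-margins are all strictly positive. Every entry of a $\{1,s\}$- or $\{2,s\}$-marginal of $u$ is a sum of two contributions, one coming from an $(i,j)$-block whose defining condition constrains $K_a$ and one from a block constraining $K_b$; since $a\ne b$, at least one of $a,b$ differs from $s$, and the contribution from the corresponding block is positive, because the relevant one of $C$, $[d_a]\setminus C$, $D$, $[d_b]\setminus D$ is nonempty and the coordinate it constrains is free given $K_s=k$. Thus $A_{K_{2,N-2}}u_{P_{a,C,b,D}}$ is strictly positive, so by the ``only if'' part of Lemma~\ref{lem:graphical-positive-margins} the positive margins property fails for $N\ge 5$. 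Together with the previous paragraph this proves the theorem.

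The only step requiring genuine care is the third one: one must organize the case analysis according to whether $s$ equals $a$ or $b$ and, in each case, name the $(i,j)$-block supplying a nonzero term. This is where the hypothesis $a\ne b$ (equivalently $N\ge 5$) enters, and it is exactly the place where the argument would break for $N=4$, since then the block constraining $K_a$ and the block constraining $K_b$ can be constrained at the same coordinate $s=a=b$ and both vanish simultaneously. All remaining verifications are routine consequences of the prime decomposition already established in Theorem~\ref{thm:decomposition-K2N}.
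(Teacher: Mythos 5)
Your proposal is correct and follows essentially the same route as the paper: feed the prime decomposition of Theorem~\ref{thm:decomposition-K2N} into Lemma~\ref{lem:graphical-positive-margins}, then check the margins of $u_{P_{a,C,b,D}}$ separately for $a=b$ (which must have a vanishing marginal entry, settling $N=4$) and $a\neq b$ (which has all positive margins, settling $N\ge 5$). Your partition-of-$[d_a]$ bookkeeping for the $a=b$ case and your systematic case split on $s\in\{a,b\}$ or not for $a\neq b$ are slightly tidier variants of the paper's ad hoc checks, but the substance is the same.
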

\begin{proof}
We check the condition in
Lemma~\ref{lem:graphical-positive-margins}.  If $N=4$, then each
minimal prime is of the form $P=P_{a,C,a,D}$.  Because of the symmetry
we may assume $a=3$.  Then
\begin{equation*}
m_P = \prod_{k,l : k\notin C} p_{11kl} \prod_{k,l : k\notin D} p_{12kl} 
\prod_{k,l : k\in D} p_{21kl} \prod_{k,l : k\in C} p_{22kl}.
\end{equation*}
Suppose that $C$ and $D$ intersect.  Then the $\{1,3\}$-marginal of
the exponent vector of $m_P$ is not strictly positive, since any
variable $p_{1jkl}$ that divides $m_P$ satisfies $k\notin C\cap D$.
Similarly, if $C$ and $D$ do not intersect, then $C$ intersects the
complement of $D$, and hence the $\{2,3\}$-marginal cannot be strictly
positive.

If $N>4$, then consider a prime of the form $P=P_{3,C,4,D}$.  Then
\begin{equation*}
m_P = \prod_{K : K_a\notin C} p_{11K} \prod_{K : K_b\notin D} p_{12K} 
\prod_{K : K_b\in D} p_{21K} \prod_{K : K_a\in C} p_{22K},
\end{equation*}
and the exponent vector has strictly positive margins: Indeed, take
for example the $\{1,a\}$-marginal.  For any $k\in[d_a]$, choose
$K,L\in[d_3]\times\dots\times[d_N]$ such that $K_b\notin D, L_b\in D$
and $K_a=k=L_a$.  Then $p_{12K}p_{21L}$ divides $m_P$, and hence the
$\{1,k\}$-count and the $\{2,k\}$-count of the $\{1,a\}$-marginal are
larger than zero.
\end{proof}

Finally, we want to prove that $K_{2,N-2}$ satisfies the interior
point property.  We first describe additional inequalities of the
marginal cone.
\begin{lemma}
\label{lem:K2n-cycle-inequalities}
Let $N\ge 4$, and assume $d_1=d_2=2$.  For any table $u\in\NN^n$,
denote $y = Au$ the vector of $K_{2,N-2}$-marginals, which has components $y^{ij}_{kl}$ for
$i=1,2$, $j=3,\dots,N$, and $(k,l)\in[2]\times [d_j]$.  Let $a,b$ such
that $3\le a < b\le N$ and let $C\subset [d_a]$, $D\subset[d_b]$ be
non-empty subsets such that $C\neq [d_a]$ and $D\neq[d_b]$.  For any
choice of $a,b,C,D$
\begin{equation}
\label{eq:K2n-facets}
\sum_{k\in C} y_{1k}^{1a} + \sum_{k\notin C}y_{2k}^{2a}
+ \sum_{l\in D} y_{1l}^{2b} - \sum_{l\in D} y_{1l}^{1b} \ge 0.
\end{equation}
\end{lemma}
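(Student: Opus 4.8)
The plan is to prove the inequality~\eqref{eq:K2n-facets} by hand: I would expand every $K_{2,N-2}$-marginal occurring in it into a sum of entries of the table $u$, and then exhibit the left-hand side as a sum of entries of $u$ with non-negative coefficients. Since the entries of $u\in\NN^n$ are non-negative, this suffices. Concretely, write $u=(u_{ijK})$ with $i,j\in[2]$ the states of $X_1,X_2$ and $K\in\prod_{s=3}^N[d_s]$ the states of the remaining nodes, so that $X_a$ has state $K_a$. Unwinding the definition of the clique marginals, the four sums in~\eqref{eq:K2n-facets} become
\begin{align*}
\sum_{k\in C}y_{1k}^{1a} &= \sum_{j\in[2]}\sum_{K:\,K_a\in C}u_{1jK}, & \sum_{k\notin C}y_{2k}^{2a} &= \sum_{i\in[2]}\sum_{K:\,K_a\notin C}u_{i2K},\\
\sum_{l\in D}y_{1l}^{2b} &= \sum_{i\in[2]}\sum_{K:\,K_b\in D}u_{i1K}, & \sum_{l\in D}y_{1l}^{1b} &= \sum_{j\in[2]}\sum_{K:\,K_b\in D}u_{1jK}.
\end{align*}

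The next step is to treat the two $b$-marginal terms, whose difference telescopes. Using the binary hypothesis $d_1=d_2=2$ in the form $\sum_{i\in[2]}u_{i1K}=u_{11K}+u_{21K}$ and $\sum_{j\in[2]}u_{1jK}=u_{11K}+u_{12K}$, the summand $u_{11K}$ cancels and I obtain
\begin{equation*}
\sum_{l\in D}y_{1l}^{2b}-\sum_{l\in D}y_{1l}^{1b}=\sum_{K:\,K_b\in D}(u_{21K}-u_{12K})\ \geq\ -\sum_{K:\,K_b\in D}u_{12K}.
\end{equation*}
It therefore remains to show that the two $a$-marginal terms already dominate this lower bound, i.e.
\begin{equation*}
\sum_{j\in[2]}\sum_{K:\,K_a\in C}u_{1jK}+\sum_{i\in[2]}\sum_{K:\,K_a\notin C}u_{i2K}\ \geq\ \sum_{K:\,K_b\in D}u_{12K}.
\end{equation*}
I would verify this termwise: every entry of $u$ appears on the left with coefficient $0$ or $1$ (an entry cannot lie in both double sums, since one requires $K_a\in C$ and the other $K_a\notin C$), and for every $K$ the entry $u_{12K}$ appears on the left exactly once --- in the first double sum with $j=2$ when $K_a\in C$, and in the second double sum with $i=1$ when $K_a\notin C$. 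Restricting to the $K$ with $K_b\in D$ and discarding the other non-negative terms on the left yields the displayed inequality, and hence~\eqref{eq:K2n-facets}.

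The argument is entirely elementary, so I do not expect any real obstacle. The one place that needs genuine care is the index bookkeeping --- matching the super- and subscripts of $y$ to the correct node labels and keeping the signs straight --- together with the observation that the binary hypothesis $d_1=d_2=2$ is used essentially in the telescoping step, where it makes the complementary states of $X_1$ and $X_2$ unique; this is exactly why the hypothesis appears in the lemma.
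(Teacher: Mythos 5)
Your proof is correct, and it is essentially the same argument as the paper's: the paper reduces to checking the linear functional~\eqref{eq:K2n-facets} on unit vectors $e_x\in\NN^n$ and then runs a three-way case analysis on $(x_2, x_a)$, which is exactly the coefficient-matching you perform after expanding the marginals (your cancellation of $u_{11K}$ in the two $b$-terms is the paper's Case $x_2=1$; your matching of $u_{12K}$ against the first resp.\ second $a$-sum is the paper's cases $x_a\in C$ resp.\ $x_a\notin C$, $x_2=2$). The marginal expansions and the role of $d_1=d_2=2$ are all handled correctly.
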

\begin{proof}
It suffices to show that each unit vector in $\NN^n$
satisfies~\eqref{eq:K2n-facets}.  Consider the unit vector $e_x$
corresponding to $x\in\cX$.  If the last summand $\sum_{l\in D} y_{1l}^{1b}(e_x)$
vanishes, then~\eqref{eq:K2n-facets} holds.  Otherwise, $x_1=1$ and
$x_l\in D$, and so this sum equals one.  In this case, at least one of the
following three possibilities happens: either $x_a\in C$, or $x_a\notin C$ and $x_2=2$, or
$x_2=1$.  In any case, $\sum_{k\in C} y_{1k}^{1a} + \sum_{k\notin
C}y_{2k}^{2a} + \sum_{l\in D} y_{1l}^{2b}\ge 1$, and
so~\eqref{eq:K2n-facets} holds.
\end{proof}

\begin{thm}
\label{thm:K2N-final-answer}
Assume that $d_1=d_2=2$.  If $u\in\NN^n$ has strictly positive
$K_{2,N-2}$-margins and if $u$ satisfies all inequalities of the
form~\eqref{eq:K2n-facets} with strict inequality, then the fiber of
$u$ is connected by quadratic moves.
\end{thm}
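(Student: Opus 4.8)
The plan is to run the argument behind the sufficient direction of Lemma~\ref{lem:positive-margins}, with the non-negativity of the matrix replaced by the validity of the inequalities~\eqref{eq:K2n-facets} from Lemma~\ref{lem:K2n-cycle-inequalities}, fed by the prime decomposition of Theorem~\ref{thm:decomposition-K2N}. Assume, for contradiction, that the fiber of $u$ is not connected by the quadratic moves, and choose $v$ in the fiber of $u$ with $p^u-p^v\notin I_{\gl(K_{2,N-2})}$. Since $u$ and $v$ have the same $K_{2,N-2}$-margins, $p^u-p^v\in I_{K_{2,N-2}}$, so by Theorem~\ref{thm:decomposition-K2N} there is a minimal prime $P=P_{a,C,b,D}$ with $p^u-p^v\notin P$. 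As $P$ is prime, $p^u\notin P$ or $p^v\notin P$; say $p^u\notin P$. Because $P$ contains the monomial ideal generated by the variables listed in Lemma~\ref{lem:associated-primes}, no such variable divides $p^u$, so $\supp(u)$ is contained in the set of states whose variable divides the squarefree monomial $m_P$; equivalently, $p^u$ divides $m_P^k$ for some $k$, i.e.\ $u\le k\,u_P$ coordinatewise.

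Next I would attach to $P$ a valid inequality of the marginal cone that vanishes on the $K_{2,N-2}$-margin of $e_x$ for every state $x$ with $p_x\mid m_P$ --- equivalently, one that vanishes on $A_G u_P$. If $a=b$, a short computation as in the proof of Theorem~\ref{thm:K22-pos-marg} shows that $A_G u_P$ is not strictly positive: some entry of the $\{1,a\}$-margin vanishes if $C\cap D\ne\emptyset$, and some entry of the $\{2,a\}$-margin vanishes otherwise, so the hypothesis of strictly positive margins already yields a contradiction (this case alone suffices when $N=4$). If $a\ne b$, I claim the inequality~\eqref{eq:K2n-facets} with parameters $(a,C,b,[d_b]\minus D)$ --- read, when $a>b$, through the graph automorphism exchanging the vertices $a$ and $b$, which carries it to another valid inequality --- vanishes on $A_G u_P$. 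Using $m_P=\prod_{K_a\notin C}p_{11K}\cdot\prod_{K_b\notin D}p_{12K}\cdot\prod_{K_b\in D}p_{21K}\cdot\prod_{K_a\in C}p_{22K}$ and splitting into the four cases according to the values $(x_1,x_2)\in\{1,2\}^2$ of the two binary coordinates of a state $x$ appearing in $m_P$, the left-hand side of~\eqref{eq:K2n-facets} (with $D$ replaced by its complement) collapses to $0$ in each case.

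Finally, let $\ell$ denote that linear functional. By Lemma~\ref{lem:K2n-cycle-inequalities}, applied after the relabelling of vertices if $a>b$, $\ell$ is non-negative on $A_G w$ for every $w\in\NN^n$, and by the previous paragraph $\ell(A_G u_P)=0$. Since $u\le k\,u_P$ coordinatewise, $0\le\ell(A_G u)\le k\,\ell(A_G u_P)=0$, so $\ell(A_G u)=0$, contradicting the assumption that $u$ satisfies every inequality~\eqref{eq:K2n-facets} strictly. I expect the only delicate point to be the middle step: pinning down, for each minimal prime $P_{a,C,b,D}$, the exact inequality that vanishes on $A_G u_P$. Getting the complementation $D\mapsto[d_b]\minus D$ right, and noticing that the members of the family in Lemma~\ref{lem:K2n-cycle-inequalities} with $a>b$ --- needed for the minimal primes whose first index exceeds their third --- are obtained from those with $a<b$ by permuting the non-binary vertices, is where the care lies; the remaining verifications are routine indicator-function bookkeeping.
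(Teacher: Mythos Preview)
Your argument is correct and follows exactly the paper's approach: invoke the sufficient direction of Lemma~\ref{lem:positive-margins} via the prime decomposition of Theorem~\ref{thm:decomposition-K2N}, disposing of the primes with $a=b$ by the margin computation of Theorem~\ref{thm:K22-pos-marg} and of those with $a\neq b$ by exhibiting an inequality of the form~\eqref{eq:K2n-facets} that is tight at $A_G u_P$. Your caution about the complementation $D\mapsto[d_b]\setminus D$ is warranted---the paper's displayed equality uses $D$ itself, but checking the four blocks $(x_1,x_2)\in\{1,2\}^2$ of $m_P$ shows that it is your choice, not the paper's, that actually evaluates to zero on every state in $\supp(u_P)$; your handling of the $a>b$ case (the proof of Lemma~\ref{lem:K2n-cycle-inequalities} nowhere uses $a<b$) is likewise a detail the paper glosses over.
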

\begin{proof}
We apply Lemma~\ref{lem:positive-margins}.  Let $P=P_{a,C,b,D}$ be a
minimal prime.  If $a=b$, then the proof of
Theorem~\ref{thm:K22-pos-marg} shows that the exponent vector $u_P$ of
$m_P$ has at least one vanishing marginal.
If $a\neq b$, then a direct verification shows that $u_P$ satisfies
\begin{equation*}
\sum_{k\in C} y_{1k}^{1a} + \sum_{k\notin C}y_{2k}^{2a}
+ \sum_{l\in D} y_{1l}^{2b} - \sum_{l\in D} y_{1l}^{1b} = 0.\qedhere
\end{equation*}
\end{proof}

\begin{proof}[Proof of Theorem~\ref{thm:K22-pos-marg-int-point}] 
Combine Theorems~\ref{thm:K22-pos-marg} and~\ref{thm:K2N-final-answer}.
\end{proof}

\bibliographystyle{amsplain}
\bibliography{math}

\end{document}